\title{Optimal convergence rates for goal-oriented FEM with quadratic goal functional}
\author{Roland Becker}
\address{Université de Pau et des Pays de l’Adour, IPRA-LMAP, Avenue de l’Université BP 1155, 64013 PAU Cedex, France}
\email{roland.becker@univ-pau.fr}
\author{Michael Innerberger}
\address{TU Wien, Institute of Analysis and Scientific Computing, Wiedner Hauptstr. 8-10/E101/4, 1040 Vienna, Austria}
\email{michael.innerberger@asc.tuwien.ac.at \quad \rm (corresponding author)}
\author{Dirk Praetorius}
\address{TU Wien, Institute of Analysis and Scientific Computing, Wiedner Hauptstr. 8-10/E101/4, 1040 Vienna, Austria}
\email{dirk.praetorius@asc.tuwien.ac.at}
\keywords{Adaptivity, goal-oriented algorithm, nonlinear quantity of interest, convergence, optimal convergence rates, finite element method}
\subjclass[2010]{65N30, 65N50, 65Y20, 41A25}
\thanks{{\bf Acknowledgment.} The authors thankfully acknowledge support by the Austrian Science Fund (FWF) through the doctoral school \emph{Dissipation and dispersion in nonlinear PDEs} (grant W1245), the SFB \emph{Taming complexity in partial differential systems} (grant SFB F65), and the stand-alone project \emph{Computational nonlinear PDEs} (grant P33216).}
\def\set#1#2{\big\{#1 \,:\, #2 \big\}}
\def\eps{\varepsilon}
\newcommand*\patchAmsMathEnvironmentForLineno[1]{%
  \expandafter\let\csname old#1\expandafter\endcsname\csname #1\endcsname
  \expandafter\let\csname oldend#1\expandafter\endcsname\csname end#1\endcsname
  \renewenvironment{#1}%
     {\linenomath\csname old#1\endcsname}%
     {\csname oldend#1\endcsname\endlinenomath}}%
\newcommand*\patchBothAmsMathEnvironmentsForLineno[1]{%
  \patchAmsMathEnvironmentForLineno{#1}%
  \patchAmsMathEnvironmentForLineno{#1*}}%
\def\@seccntformat#1{\hspace*{4mm}%
  \protect\textup{\protect\@secnumfont
    \ifnum\pdfstrcmp{subsection}{#1}=0 \bfseries\fi
    \csname the#1\endcsname
    \protect\@secnumpunct
  }%
}
\def\tmp#1{\normalsize#1\small}
\renewcommand{\section}[2][]{%
  \vskip4mm
  \refstepcounter{section}%
  \begin{center}\bf%
    \uppercase{\thesection.~\tmp#2\normalsize}%
   \end{center}%
   \vskip2mm
}
\def\coarse{H}
\def\fine{h}
\def\N{\mathbb{N}}
\def\T{\mathbb{T}}
\def\MM{\mathcal{M}}
\def\TT{\mathcal{T}}
\def\XX{\mathcal{X}}
\def\CC{\mathcal{C}}
\def\Crel{C_{\rm rel}}
\def\Cstab{C_{\rm stab}}
\def\qred{q_{\rm red}}
\def\Clin{C_{\rm lin}}
\def\qlin{q_{\rm lin}}
\def\qctr{q_{\rm ctr}}
\def\Cnvb{C_{\rm mesh}}
\def\Ctwo{C_{\rm aux}}
\def\Cmon{C_{\rm mon}}
\def\Cdrel{C_{\rm drel}}
\def\refine{{\tt refine}}
\def\enorm#1{|\!|\!| #1 |\!|\!|}
\def\reff#1#2{\stackrel{\eqref{#1}}{#2}}
\def\refp#1#2{\stackrel{\phantom{\eqref{#1}}}{#2}}
\def\dual#1#2{\langle#1\,,\,#2\rangle}
\newcommand{\jump}[1]{[\![#1]\!]}
\newcommand{\normalvec}{\boldsymbol{n}}
\def\Cmark{C_{\rm mark}}
\def\Copt{C_{\rm opt}}
\def\norm#1#2{\|#1\|_{#2}}
\def\OO{\mathcal{O}}
\def\R{\mathbb{R}}
\def\d#1{\,{\rm d}#1}
\def\div{{\rm div}\,}
\def\UU{\mathcal{U}}
\def\RR{\mathcal{R}}
\newcounter{statement}
\newenvironment{statement}[2][!]{%
\vskip3mm
\hrule
\hrule
\hrule
\vskip1mm
\noindent%
\refstepcounter{statement}%
\bf#2~\thestatement%
\ifthenelse{\equal{#1}{!}}{.\ }{~(#1).\ }%
\it%
}{%
\vskip1mm
\hrule
\hrule
\hrule
\vskip2mm
}
\newcounter{algorithm}
\renewcommand{\thealgorithm}{\Alph{algorithm}}
\newenvironment{algorithm}[1][!]{%
	\vskip3mm
	\hrule
	\hrule
	\hrule
	\vskip1mm
	\noindent%
	\refstepcounter{algorithm}%
	\bf%
	Algorithm~\thealgorithm%
	\ifthenelse{\equal{#1}{!}}{.\ }{~(#1).\ }%
	\it%
}{%
	\vskip1mm
	\hrule
	\hrule
	\hrule
	\vskip2mm
}
\newenvironment{theorem}[1][!]{\begin{statement}[#1]{Theorem}}{\end{statement}}
\newenvironment{lemma}[1][!]{\begin{statement}[#1]{Lemma}}{\end{statement}}
\newenvironment{proposition}[1][!]{\begin{statement}[#1]{Proposition}}{\end{statement}}
\newenvironment{remark}[1][!]{\begin{statement}[#1]{Remark}}{\end{statement}}
\begin{document}

\begin{abstract}
We consider a linear elliptic PDE and a quadratic goal functional. The goal-oriented adaptive FEM algorithm (GOAFEM) solves the primal as well as a dual problem, where the goal functional is always linearized around the discrete primal solution at hand. We show that the marking strategy proposed in [Feischl et al, SIAM J.\ Numer.\ Anal., 54 (2016)] for a linear goal functional is also optimal for quadratic goal functionals, i.e., GOAFEM leads to linear convergence with optimal convergence rates.
\end{abstract}

\maketitle
\thispagestyle{fancy}

\def\A{\boldsymbol{A}}
\def\b{\boldsymbol{b}}
\def\f{\boldsymbol{f}}
\def\SS{\mathcal{S}}
\def\KK{\mathcal{K}}



\def\g{\boldsymbol{g}}
\section{Introduction}
Let $\Omega \subset \R^d$, $d \ge 2$, be a bounded Lipschitz domain. For given $f \in L^2(\Omega)$ and $\f \in [L^2(\Omega)]^d$, 
we consider a general linear elliptic partial differential equation
\begin{align}\label{eq:strongform}
\begin{split}
 -\div \A \nabla u + \b \cdot \nabla u 
 + cu &= f + \div \f  \quad \text{in } \Omega,\\
 u &= 0 \hspace{19mm} \text{on } \Gamma := \partial\Omega,
\end{split}
\end{align}
where $\A(x) \in \R^{d \times d}_{\rm sym}$ is a symmetric matrix, $\b(x) \in \R^d$, and $c(x) \in \R$. As usual, we assume that $\A, \b, c \in L^\infty(\Omega)$, that $\A$ is uniformly positive definite and that the weak form (see~\eqref{eq:primal:formulation} below) fits into the setting of the Lax--Milgram lemma.

While standard adaptivity aims to approximate the exact solution $u \in H^1_0(\Omega)$ at optimal rate in the energy norm (see, e.g., \cite{doerfler1996,mns2000,bdd2004,stevenson2007,ckns2008} for some seminal contributions and \cite{ffp2014} for the present model problem), goal-oriented adaptivity aims to approximate, at optimal rate, only the functional value $G(u) \in \R$ (also called \emph{quantity of interest} in the literature). Usually, goal-oriented adaptivity is more important in practice than standard adaptivity and has therefore attracted much interest also in the mathematical literature; see, e.g.,~\cite{MR1960405,MR2009692,MR1352472,MR2009374} for some prominent contributions. Unlike standard adaptivity, there are only few works, which aim for a mathematical understanding of optimal rates for goal-oriented adaptivity; see~\cite{ms2009,bet2011,ffghp2016,fpz2014}. While the latter works consider only \emph{linear} goal functionals, the present work aims to address, for the first time, optimal convergence rates for goal-oriented adaptivity with a \emph{nonlinear} goal functional.
More precisely, we assume that our primary interest is not in the unknown solution $u \in H^1_0(\Omega)$, but only in the functional value
\begin{align}\label{eq:goalfunctional}
 G(u) := \dual{\KK u}{u}_{H^{-1}\times H^1_0}
\end{align}
with a quadratic goal functional stemming from a bounded linear operator $\KK : H^1_0(\Omega) \to H^{-1}(\Omega)$.
Here, $\dual{\cdot}{\cdot}_{H^{-1}\times H^1_0}$ denotes the duality between $H^1_0(\Omega)$ and its dual space $H^{-1}(\Omega) := H^1_0(\Omega)'$ with respect to the extended $L^2$-scalar product. Possible examples for such goal functionals include,
e.g., $G(u) = \int_\Omega g(x) u(x)^2 \d{x}$ for a given weight $g \in L^\infty(\Omega)$ or $G(u) = \int_\Omega u(x) \, \g(x)\cdot \nabla u(x)  \d{x}$ for a given weight $\g \in [L^\infty(\Omega)]^d$.

In this work, we formulate a goal-oriented adaptive finite element method (GOAFEM), where the quantity of interest $G(u)$ is approximated by $G(u_\ell)$ for some FEM solution  $u_\ell \approx u$ such that
\begin{equation}
\label{eq:convergence}
 | G(u) - G(u_\ell) | \xrightarrow{\ell \to \infty} 0.
\end{equation}
Moreover, if $\KK$ is compact, then the convergence~\eqref{eq:convergence} holds even with optimal algebraic rates. 
Compared to the available literature on convergence of goal-adaptive FEM for linear goal functionals~\cite{ms2009,bet2011,ffghp2016,fpz2014}, we need to linearize the \emph{nonlinear} goal functional in each step of the adaptive algorithm around the present discrete solution $u_\ell$. Put in explicit terms, the prior works consider dual problems which are independent of $\ell$, while in the present work the dual problems change in each step of the adaptive loop. It is our main contribution that the additional linearization error is thoroughly taken into account for the convergence analysis.

{\bf Outline.} 
The paper is organized as follows:
Section~\ref{sec:main} formulates the finite element discretization together with two goal-oriented adaptive algorithms (Algorithm~\ref{algorithm} and \ref{algorithm:combined}).
Moreover, we state our main results:
Proposition~\ref{prop:convergenceA} and Proposition~\ref{prop:convergenceB} guarantee convergence of Algorithm~\ref{algorithm} and Algorithm~\ref{algorithm:combined}, respectively.
If the operator $\KK$ is even compact, then Theorem~\ref{theorem:main} yields linear convergence with optimal rates for Algorithm~\ref{algorithm}, while Theorem~\ref{theorem:combined} yields convergence with almost optimal rates for the simpler Algorithm~\ref{algorithm:combined}.
Section~\ref{sec:numerics} provides some numerical experiments which underline the theoretical predictions.
Sections~\ref{sec:auxiliary}--\ref{sec:proof:combined} are concerned with the proofs of our main results.


\section{Adaptive algorithm \& main result}\label{sec:main}

\subsection{Variational formulation}

Define the bilinear form 
\begin{align}\label{eq:bilinear_form}
 a(u, v) 
 := \int_\Omega \A \nabla u \cdot \nabla v \d{x} + \int_\Omega \b \cdot \nabla u \, v \d{x} 
 + \int_\Omega cuv \d{x}.
\end{align}
We suppose that $a(\cdot,\cdot)$ fits into the setting of the Lax--Milgram lemma, i.e., $a(\cdot,\cdot)$ is continuous and elliptic on $H^1_0(\Omega)$. 
While continuity
\begin{align}\label{eq:a:continuous}
 a(u,v) \le C_{\rm cnt} \, \norm{u}{H^1(\Omega)} \norm{v}{H^1(\Omega)}
 \quad \text{for all } u, v \in H^1_0(\Omega)
\end{align}
follows from the assumptions made with $C_{\rm cnt} = \norm{\A}{L^\infty(\Omega)} + \norm{\b}{L^\infty(\Omega)} + \norm{c}{L^\infty(\Omega)}$, the ellipticity 
\begin{align}\label{eq:a:elliptic}
 a(u,u) \ge C_{\rm ell} \, \norm{u}{H^1(\Omega)}^2
 \quad \text{for all } u \in H^1_0(\Omega)
\end{align}
requires additional assumptions on the coefficients, e.g.,
\begin{align*}
 \inf_{x \in \Omega} \inf_{\boldsymbol{y} \in \R^d \backslash \{0\}} \,  
 \frac{\boldsymbol{y} \cdot \A(x) \boldsymbol{y}}{|\boldsymbol{y}|^2} > 0
 \text{\ \ \ and\ \ \ }
 \b \in \boldsymbol{H}({\rm div};\Omega) \text{\ \ with\ \ }
 \inf_{x \in \Omega} \Big( \frac{1}{2} \, \div \b(x) + c(x) \Big) \ge 0.
\end{align*}%
The weak formulation of~\eqref{eq:strongform} reads
\begin{align}\label{eq:primal:formulation}
 a(u, v) = F(v) := \int_\Omega fv \d{x} -\int_\Omega \f \cdot \nabla v \d{x}
 \quad \text{for all } v \in H^1_0(\Omega).
\end{align}
According to the Lax--Milgram lemma, \eqref{eq:primal:formulation} admits a unique solution $u \in H^1_0(\Omega)$. Given $w \in H^1_0(\Omega)$, the same argument applies and proves that the (linearized) dual problem 
\begin{align}\label{eq:dual:formulation}
 a(v, z[w]) = b(v,w) + b(w,v)
 \quad \text{for all } v \in H^1_0(\Omega)
\end{align}
admits a unique solution $z[w] \in H^1_0(\Omega)$, where we abbreviate the notation by use of $b(v, w) := \dual{\KK v}{w}_{H^{-1}\times H^1_0}$. We note that $b(\cdot,\cdot)$ is, in particular, a continuous bilinear form on $H^1_0(\Omega)$.
Throughout, we denote by $\enorm{v}^2 := \int_\Omega \A \nabla v \cdot \nabla v \d{x}$ the energy norm induced by the principal part of $a(\cdot,\cdot)$, which is an equivalent norm on $H^1_0(\Omega)$.
Finally, we stress that all main results also apply to the case that $a(\cdot,\cdot)$ satisfies only a G\r{a}rding inequality (instead of the strong ellipticity~\eqref{eq:a:elliptic})
as long as the weak formulations~\eqref{eq:primal:formulation} and~\eqref{eq:dual:formulation} are well-posed; see Section~\ref{section:garding} below.

\subsection{Finite element method}\label{subsec:fem}

For a conforming triangulation  $\TT_\coarse$ of $\Omega$ into compact simplices and a polynomial degree $p \ge 1$, we consider the conforming finite element space
\begin{align}
 \XX_\coarse := \set{v_\coarse \in H^1_0(\Omega)}{\forall T \in \TT_\coarse \quad v_\coarse|_T \text{ is a polynomial of degree } \le p}.
\end{align}
We approximate  $u \approx u_\coarse \in \XX_\coarse$ and $z[w] \approx z_\coarse[w] \in \XX_\coarse$. 
More precisely, the Lax--Milgram lemma yields the existence and uniqueness of discrete FEM solutions $u_\coarse, z_\coarse[w] \in \XX_\coarse$ of
\begin{align}\label{eq:discrete:formulation}
 a(u_\coarse, v_\coarse) = F(v_\coarse)
 \,\,\, \text{and} \,\,\,
 a(v_\coarse, z_\coarse[w]) =  b(v_\coarse, w) + b(w, v_\coarse)
 \,\,\,  \text{for all } v_\coarse \in \XX_\coarse.
\end{align}

\subsection{Linearization of the goal functional}

To control the goal error $|G(u) - G(u_\coarse)|$, we employ the dual problem. Note that
\begin{align*}
 b(u - u_\coarse, u - u_\coarse)
 &= b(u,u) - b(u_\coarse,u) - b(u,u_\coarse) + b(u_\coarse,u_\coarse)
 \\&
 = \big[ G(u) - G(u_\coarse) \big] - \big[ b(u_\coarse,u) + b(u,u_\coarse) -2 b(u_\coarse,u_\coarse) \big] 
 \\&
 = \big[ G(u) - G(u_\coarse) \big] - \big[ b(u_\coarse, u-u_\coarse) + b(u - u_\coarse,u_\coarse) \big]. 
\end{align*}%
With the dual problem and the Galerkin orthogonality, we rewrite the second bracket as
\begin{align*}
 b(u - u_\coarse,u_\coarse) + b(u_\coarse, u - u_\coarse)
 \reff{eq:dual:formulation}= a(u - u_\coarse, z[u_\coarse]) 
 = a(u - u_\coarse, z[u_\coarse] - z_\coarse[u_\coarse]).
\end{align*}
With continuity of the bilinear forms $a(\cdot, \cdot)$ and $b(\cdot, \cdot)$, we thus obtain that
\begin{align}\label{eq:ansart:aposteriori}
 \begin{split}
 \big| G(u) - G(u_\coarse) \big|
 &= \big| a(u - u_\coarse, z[u_\coarse] - z_\coarse[u_\coarse])  + b(u - u_\coarse, u - u_\coarse) \big|
 \\&
 \lesssim \enorm{u - u_\coarse} \, \big[ \, \enorm{z[u_\coarse] - z_\coarse[u_\coarse]} + \enorm{u - u_\coarse} \, \big].
 \end{split}
\end{align}

\subsection{Mesh-refinement}

Let $\TT_0$ be a given conforming triangulation of $\Omega$.
We suppose that the mesh-refinement is a deterministic and fixed strategy, e.g., newest vertex bisection~\cite{stevenson2008}. 
For each triangulation $\TT_\coarse$ and marked elements $\MM_\coarse \subseteq \TT_\coarse$, let $\TT_\fine := \refine(\TT_\coarse,\MM_\coarse)$ be the coarsest triangulation, where all $T \in \MM_\coarse$ have been refined, i.e., $\MM_\coarse \subseteq \TT_\coarse \backslash \TT_\fine$. 
We write $\TT_\fine \in \T(\TT_\coarse)$, if $\TT_\fine$ results from $\TT_\coarse$ by finitely many steps of refinement.
To abbreviate notation, let $\T:=\T(\TT_0)$.

We further suppose that each refined element has at least two sons, i.e.,
\begin{align}\label{eq:mesh-sons}
	\#(\TT_\coarse \backslash \TT_\fine) + \#\TT_\coarse \le \#\TT_\fine
	\quad \text{for all } \TT_\coarse \in \T \text{ and all } \TT_\fine \in \T(\TT_\coarse),
\end{align}
and that the refinement rule satisfies the mesh-closure estimate
\begin{align}\label{eq:mesh-closure}
	\#\TT_\ell - \#\TT_0 \le \Cnvb\,\sum_{j=0}^{\ell-1}\#\MM_j
	\quad \text{for all } \ell \in \N,
\end{align}
where $\Cnvb>0$ depends only on $\TT_0$.
This has first been proved for 2D newest vertex bisection in~\cite{bdd2004} and has later been generalized to arbitrary dimension $d\ge2$ in~\cite{stevenson2008}.
While both works require an additional admissibility assumption on $\TT_0$, this has been proved unnecessary at least for 2D in~\cite{kpp2013}.
Finally, it has been proved in~\cite{ckns2008,stevenson2007} that newest vertex bisection ensures the overlay estimate, i.e., for all triangulations $\TT_\coarse,\TT_\fine \in \T$, there exists a common refinement $\TT_\coarse \oplus \TT_\fine \in \T(\TT_\coarse) \cap \T(\TT_\fine)$ which satisfies that
\begin{align}\label{eq:mesh-overlay}
	\#(\TT_\coarse \oplus \TT_\fine) \le \#\TT_\coarse + \#\TT_\fine - \#\TT_0.
\end{align}
For meshes with first-order hanging nodes, \eqref{eq:mesh-sons}--\eqref{eq:mesh-overlay} are analyzed in~\cite{bn2010}, while T-splines and hierarchical splines for isogeometric analysis are considered in~\cite{mp2015,morgenstern2016} and \cite{bgmp2016,ghp2017}, respectively.

\subsection{Error estimators}

For $\TT_\coarse \in \T$ and $v_\coarse \in \XX_\coarse$, let
\begin{equation*}
	\eta_\coarse(T, v_\coarse) \geq 0
	\quad \text{and} \quad
	\zeta_\coarse(T, v_\coarse) \geq 0
	\quad \text{for all } T \in \TT_\coarse
\end{equation*}
be given refinement indicators. For $\UU_\coarse \subseteq \TT_\coarse$, let
\begin{equation*}
	\eta_\coarse(\UU_\coarse, v_\coarse)
	:=
	\Big( \sum_{T \in \UU_\coarse} \eta_\coarse(T, v_\coarse)^2 \Big)^{1/2}
	\quad \text{and} \quad
	\zeta_\coarse(\UU_\coarse, v_\coarse)
	:=
	\Big( \sum_{T \in \UU_\coarse} \zeta_\coarse(T, v_\coarse)^2 \Big)^{1/2}.
\end{equation*}
To abbreviate notation, let $\eta_\coarse(v_\coarse) := \eta_\coarse(\TT_\coarse, v_\coarse)$ and $\zeta_\coarse(v_\coarse) := \zeta_\coarse(\TT_\coarse, v_\coarse)$.

We suppose that the estimators $\eta_\coarse$ and $\zeta_\coarse$ satisfy the following \emph{axioms of adaptivity} from~\cite{axioms}: There exist constants $\Cstab, \Crel, \Cdrel > 0$ and $0 < \qred < 1$ such that for all $\TT_\coarse \in \T$ and all $\TT_\fine \in \T(\TT_\coarse)$, the following assumptions are satisfied:
\renewcommand{\theenumi}{{A\arabic{enumi}}}
\begin{enumerate}
	\bf
	\item\label{assumption:stab} stability: \rm 
	For all $v_\fine \in \XX_\fine$, $v_\coarse \in \XX_\coarse$, and $\UU_\coarse \subseteq \TT_\fine \cap \TT_\coarse$, it holds that
	\begin{align*}
	\big| \eta_\fine(\UU_\coarse, v_\fine) - \eta_\coarse(\UU_\coarse, v_\coarse) \big|
	+ \big| \zeta_\fine(\UU_\coarse, v_\fine) - \zeta_\coarse(\UU_\coarse, v_\coarse) \big|
	&\le \Cstab \, \enorm{v_\fine - v_\coarse}.
	\end{align*}
	\bf
	\item\label{assumption:red} reduction: \rm
	For all $v_\coarse \in \XX_\coarse$, it holds that
	\begin{align*}
	\eta_\fine(\TT_\fine \backslash \TT_\coarse, v_\coarse) 
	&\le \qred \, \eta_\coarse(\TT_\coarse \backslash \TT_\fine, v_\coarse),
	\\
	\zeta_\fine(\TT_\fine \backslash \TT_\coarse, v_\coarse) 
	&\le \qred \, \zeta_\coarse(\TT_\coarse \backslash \TT_\fine, v_\coarse).
	\end{align*}
	\bf
	\item\label{assumption:rel} reliability: \rm
	For all $w \in H^1_0(\Omega)$, the Galerkin solutions $u_\coarse, z_\coarse[w] \in \XX_\coarse$ to~\eqref{eq:discrete:formulation} satisfy that
	\begin{align*}
	\enorm{u - u_\coarse} 
	&\le \Crel \, \eta_\coarse(u_\coarse),
	\\
	\enorm{z[w] - z_\coarse[w]} 
	&\le \Crel \, \zeta_\coarse(z_\coarse[w]).
	\end{align*} 
	\bf
	\item\label{assumption:drel} discrete reliability: \rm
	For all $w \in H^1_0(\Omega)$, the Galerkin solutions $u_\coarse, z_\coarse[w] \in \XX_\coarse$ and $u_\fine, z_\fine[w] \in \XX_\fine$ to~\eqref{eq:discrete:formulation} satisfy that
	\begin{align*}
	\enorm{u_\fine - u_\coarse} 
	&\le \Cdrel \, \eta_\coarse(\TT_\coarse \backslash \TT_\fine, u_\coarse),
	\\
	\enorm{z_\fine[w] - z_\coarse[w]}
	&\le \Cdrel \, \zeta_\coarse(\TT_\coarse \backslash \TT_\fine, z_\coarse[w]). 
	\end{align*} 
\end{enumerate}

We note that the axioms \eqref{assumption:stab}--\eqref{assumption:drel} are satisfied for, e.g., standard residual error estimators. Given $w \in H^1_0(\Omega)$, the mapping $v \mapsto b(v,w) + b(w,v)$ is linear and continuous by assumption. Hence, the Riesz theorem from functional analysis guarantees the existence (and uniqueness) of $g[w] \in H^1_0(\Omega)$ such that
\begin{align}\label{eq0:dual:right-hand-side}
 (g[w],v)_{H^1} 
 :=\! \int_\Omega \! g[w] v \d{x} + \! \int_\Omega \! \nabla g[w] \cdot \nabla v \d{x}
 = b(v,w) + b(w,v)
 \text{ for all } v \in H_0^1(\Omega).
\end{align}
With $\g[w] = - \nabla g[w]$, we thus get that
\begin{equation}\label{eq:dual:right-hand-side}
	b(v,w) + b(w,v) = \int_\Omega g[w] v \d{x} -\int_\Omega \g[w] \cdot \nabla v \d{x}
	\quad\text{for all }
	v \in H_0^1(\Omega),
\end{equation}
i.e., the right-hand sides of the primal problem~\eqref{eq:primal:formulation} and the (linearized) dual problem~\eqref{eq:dual:formulation} take the same form.
With this%
\footnote{Recall the strong form of the primal problem 
$$-\div \A \nabla u + \b \cdot \nabla u  + cu = f + \div \f \quad \text{in } \Omega$$ 
and note that the corresponding (linearized) strong form of the dual problem reads 
$$-\div \A \nabla z - \b \cdot \nabla z + (c-\div\b)z = g[w] + \div \g[w] \quad \text{in } \Omega.$$%
}, the residual error estimators 
read for $v_\coarse \in \XX_\coarse$ as
\begin{align*}
	\eta_\coarse(T,v_\coarse)^2
	&:=
	h_T^2 \norm{-\div(\A \nabla v_\coarse + \f) + \b \cdot \nabla v_\coarse + c v_\coarse - f}{L^2(T)}^2\\
	&\hspace{100pt} + h_T \norm{\jump{(\A \nabla v_\coarse + \f) \cdot \normalvec}}{L^2(\partial T \cap \Omega)}^2,\\
	\zeta_\coarse(T,v_\coarse)^2
	&:=
	h_T^2 \norm{-\div(\A \nabla v_\coarse + \g[u_\coarse]) - \b \cdot \nabla v_\coarse + (c-\div\b) v_\coarse - g[u_\coarse]}{L^2(T)}^2\\
	&\hspace{100pt} + h_T \norm{\jump{(\A \nabla v_\coarse + \g[u_\coarse]) \cdot \normalvec}}{L^2(\partial T \cap \Omega)}^2,
\end{align*}
where $\jump{\cdot}$ denotes the jump across edges and $\normalvec$ is the outwards-facing unit normal vector. We stress that our experiments below directly provide $g[w] \in L^2(\Omega)$ and $\g[w] \in [L^2(\Omega)]^d$ satisfying the representation~\eqref{eq:dual:right-hand-side}, so that there is, in fact, no need to solve~\eqref{eq0:dual:right-hand-side}.

\subsection{Adaptive algorithm}
\quad We consider the following adaptive algorithm, which adapts the marking strategy proposed in~\cite{fpz2014}.

\begin{algorithm}\label{algorithm}
	{\bfseries Input:} Adaptivity parameters $0 < \theta \le 1$ and $\Cmark \ge 1$, initial mesh $\TT_0$.
	\newline
	{\bfseries Loop:} For all $\ell = 0,1,2,\dots$, perform the following steps~{\rm(i)--(v)}:
	\begin{itemize}
		\item[\rm(i)] Compute the discrete solutions $u_\ell, z_\ell[u_\ell] \in \XX_\ell$ to~\eqref{eq:discrete:formulation}.
		\item[\rm(ii)] Compute the refinement indicators $\eta_\ell(T,u_\ell)$ and $\zeta_\ell(T, z_\ell[u_\ell])$ for all $T \in \TT_\ell$.
		\item[\rm(iii)] Determine sets $\overline\MM_\ell^{u}, \overline\MM_\ell^{uz} \subseteq \TT_\ell$ of up to the multiplicative constant $\Cmark$ minimal cardinality such that
		\begin{subequations}\label{eq:doerfler}
			\begin{align}\label{eq:doerfler:u}
			\theta \, \eta_\ell(u_\ell)^2 
			&\le \eta_\ell(\overline\MM_\ell^{u},u_\ell)^2,
			\\ \label{eq:doerfler:uz}
			\theta \, \big[ \, \eta_\ell(u_\ell)^2 + \zeta_\ell(z_\ell[u_\ell])^2 \, \big] 
			&\le \big[ \, \eta_\ell(\overline\MM_\ell^{uz},u_\ell)^2 + \zeta_\ell(\overline\MM_\ell^{uz},z_\ell[u_\ell])^2 \, \big].
			\end{align} 
		\end{subequations}
		\item[\rm(iv)] Let $\MM_\ell^{u} \subseteq \overline\MM_\ell^{u}$ and $\MM_\ell^{uz} \subseteq \overline\MM_\ell^{uz}$ with $\#\MM_\ell^{u} =  \#\MM_\ell^{uz} =  \min\{ \, \#\overline\MM_\ell^{u} \,,\, \#\overline\MM_\ell^{uz} \, \}$.
		\item[\rm(v)] Define $\MM_\ell := \MM_\ell^{u} \cup \MM_\ell^{uz}$ and generate $\TT_{\ell+1} := \refine(\TT_\ell, \MM_\ell)$.
	\end{itemize}
	{\bfseries Output:} Sequence of triangulations $\TT_\ell$ with corresponding discrete solutions $u_\ell$ and $z_\ell[u_\ell]$ as well as error estimators $\eta_\ell(u_\ell)$ and $\zeta_\ell(z_\ell[u_\ell])$.
\end{algorithm}

With Algorithm~\ref{algorithm:combined} below, we give and examine an alternative adaptive algorithm that is seemingly cheaper in computational costs.

Our first result states that Algorithm~\ref{algorithm} indeed leads to convergence.
\begin{proposition}
\label{prop:convergenceA}
	For any bounded linear operator $\KK : H_0^1(\Omega) \to H^{-1}(\Omega)$, there hold the following statements~{\rm(i)}--{\rm(ii)}:
		
	{\rm(i)} There exists a constant $\Crel^\prime > 0$ such that
	\begin{equation}\label{eq:goal-upper-bound}
		\big| G(u) - G(u_\coarse) \big|
		\le
		\Crel^\prime \, \eta_\coarse(u_\coarse) \, \big[ \, \eta_\coarse(u_\coarse)^2 + \zeta_\coarse(z_\coarse[u_\coarse])^2 \, \big]^{1/2}
		\quad \text{for all } \TT_\coarse \in \T.
	\end{equation}

	{\rm(ii)} For all $0 < \theta \leq 1$ and $1 < \Cmark \leq \infty$, Algorithm~\ref{algorithm} leads to convergence
	\begin{equation}
	\label{eq:plain-convergenceA}
		| G(u) - G(u_\ell) | 
		\le
		\Crel^\prime \eta_\ell(u_\ell) \, \big[ \,
			\eta_\ell(u_\ell)^2 + \zeta_\ell(z_\ell[u_\ell])^2 \,
		\big]^{1/2}
		\longrightarrow 0
		\quad\text{as }
		\ell \to \infty.
	\end{equation}
The constant $\Crel^\prime$ depends only on the constants from~\eqref{assumption:stab}--\eqref{assumption:rel}, the bilinear form $a(\cdot,\cdot)$ and the boundedness of $\KK$.
\end{proposition}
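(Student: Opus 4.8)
The plan is to establish part~(i) first and then derive part~(ii) as a consequence of a convergence statement for the two estimator quantities $\eta_\ell(u_\ell)$ and $\zeta_\ell(z_\ell[u_\ell])$. For~(i), I would start from the a posteriori identity~\eqref{eq:ansart:aposteriori}, which already gives
\[
 \big| G(u) - G(u_\coarse) \big|
 \lesssim \enorm{u - u_\coarse} \, \big[ \, \enorm{z[u_\coarse] - z_\coarse[u_\coarse]} + \enorm{u - u_\coarse} \, \big].
\]
Then I would apply reliability~\eqref{assumption:rel} to both factors: the first estimate in~\eqref{assumption:rel} bounds $\enorm{u - u_\coarse} \le \Crel\,\eta_\coarse(u_\coarse)$, and the second (applied with $w = u_\coarse$) bounds $\enorm{z[u_\coarse] - z_\coarse[u_\coarse]} \le \Crel\,\zeta_\coarse(z_\coarse[u_\coarse])$. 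Plugging these in and absorbing $\enorm{u-u_\coarse}\le\Crel\eta_\coarse(u_\coarse)$ into the bracket yields~\eqref{eq:goal-upper-bound} with $\Crel^\prime$ depending only on the constants in~\eqref{assumption:stab}--\eqref{assumption:rel}, on $a(\cdot,\cdot)$ (through the norm equivalence between $\enorm{\cdot}$ and $\norm{\cdot}{H^1(\Omega)}$ used in the continuity step of~\eqref{eq:ansart:aposteriori}), and on $\|\KK\|$ (through the continuity constant of $b(\cdot,\cdot)$).

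For part~(ii), the inequality in~\eqref{eq:plain-convergenceA} is just~\eqref{eq:goal-upper-bound} evaluated on the mesh $\TT_\ell$, so the only real content is the convergence $\eta_\ell(u_\ell)^2 + \zeta_\ell(z_\ell[u_\ell])^2 \to 0$. The subtlety, as the introduction stresses, is that the dual solution $z[u_\ell]$ changes with $\ell$ because of the linearization, so one cannot directly invoke a standard estimator-convergence result for a fixed dual problem. My approach would be a two-step argument. First, I would show that the primal quantities converge: since Algorithm~\ref{algorithm} always marks a set $\MM_\ell^u\subseteq\MM_\ell$ satisfying the Dörfler criterion~\eqref{eq:doerfler:u} (up to the cardinality reduction in step~(iv), which does not affect the Dörfler property since $\MM_\ell^u$ still contains a Dörfler set for $\eta_\ell$ — here one needs the observation that shrinking $\overline\MM_\ell^u$ to $\MM_\ell^u$ could break~\eqref{eq:doerfler:u}, so more carefully one uses that $\MM_\ell^{uz}$ together with the mixed Dörfler bound~\eqref{eq:doerfler:uz}, or alternatively argues via the full $\MM_\ell$), the standard estimator-reduction machinery (stability~\eqref{assumption:stab}, reduction~\eqref{assumption:red}, reliability~\eqref{assumption:rel}, together with the a priori convergence $u_\ell \to u_\infty$ in the sense of Babuška--Vít\'ek/Morin--Siebert--Veeser) gives $\eta_\ell(u_\ell) \to 0$ and $u_\ell \to u$. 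Second, with $u_\ell \to u$ in $H^1_0(\Omega)$ in hand, the right-hand side data $g[u_\ell], \g[u_\ell]$ of the dual problem converge in $L^2$, hence $z[u_\ell] \to z[u]$ and, by a Céa-type argument plus the estimator convergence for the (now essentially fixed-in-the-limit) dual problem, $\zeta_\ell(z_\ell[u_\ell]) \to 0$ as well.

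The main obstacle is precisely this interplay in the second step: the estimator $\zeta_\ell$ depends on $\ell$ not only through the mesh and the discrete solution $z_\ell[u_\ell]$ but also through the weight $\g[u_\ell]$ sitting inside its definition. To handle this cleanly I would introduce the "frozen" dual problem with right-hand side $g[u_\infty]$, compare $z_\ell[u_\ell]$ with the discrete solution $z_\ell[u_\infty]$ of the frozen problem (their difference is controlled via Lax--Milgram by $\|g[u_\ell]-g[u_\infty]\|$, which tends to $0$ since $\KK$ is bounded and $u_\ell\to u_\infty$), use stability~\eqref{assumption:stab} to transfer this to a bound on $|\zeta_\ell(z_\ell[u_\ell]) - \zeta_\ell^\infty(z_\ell[u_\infty])|$ where $\zeta_\ell^\infty$ is the frozen estimator, and then apply the classical estimator-convergence result to $\zeta_\ell^\infty$ on the mesh sequence. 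A care point throughout is that in step~(iv) the marked set is trimmed to have controlled cardinality; one must check that the trimmed set $\MM_\ell$ still drives Dörfler marking for at least the combined quantity~\eqref{eq:doerfler:uz}, which is what is actually needed to make the combined estimator contraction work — this is the same mechanism as in~\cite{fpz2014} and I would cite the corresponding lemma there rather than re-prove it.
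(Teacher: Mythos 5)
Your argument for part~(i) is exactly the paper's: combine~\eqref{eq:ansart:aposteriori} with reliability~\eqref{assumption:rel} for both the primal and the linearized dual problem, and use the equivalence $\eta+\zeta\simeq[\eta^2+\zeta^2]^{1/2}$. Part~(ii), however, has a genuine gap, and it sits precisely at the point you flag only in passing. Because of the trimming in step~(iv) of Algorithm~\ref{algorithm}, in each step the set $\MM_\ell$ is only guaranteed to contain \emph{one} of the two full sets $\overline\MM_\ell^{u}$, $\overline\MM_\ell^{uz}$ (whichever has smaller cardinality), so in a given step $\MM_\ell$ satisfies \emph{either}~\eqref{eq:doerfler:u} \emph{or}~\eqref{eq:doerfler:uz}, and neither is guaranteed to occur in every (or even in infinitely many prescribed) steps. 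Your Step~1 therefore cannot start from ``Algorithm~\ref{algorithm} always marks a Dörfler set for $\eta_\ell$'': if eventually only~\eqref{eq:doerfler:uz} is realized, the convergence $\eta_\ell(u_\ell)\to0$ must be extracted from the estimator reduction for the \emph{combined} quantity $\eta_\ell^2+\zeta_\ell^2$, and that argument in turn needs a priori convergence of the inexact dual iterates, i.e.\ $\enorm{z_{\ell+1}[u_{\ell+1}]-z_\ell[u_\ell]}\to0$, obtained by comparing $z_\ell[u_\ell]$ with $z_\ell[u_\infty]$ via the stability of Lemma~\ref{lemma:stab:z}. This case dichotomy (either infinitely many steps satisfy~\eqref{eq:doerfler:u}, or infinitely many satisfy~\eqref{eq:doerfler:uz}, and at least one alternative holds) is the actual content of the paper's Proposition~\ref{prop:plain_convergence}; your parenthetical ``argue via the full $\MM_\ell$'' does not supply it.

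Your Step~2 is also not salvageable as stated: the claim $\zeta_\ell(z_\ell[u_\ell])\to0$ is in general false for Algorithm~\ref{algorithm}, since in the regime $\#\overline\MM_\ell^{u}\le\#\overline\MM_\ell^{uz}$ for all large $\ell$ no Dörfler marking for the dual indicators is ever enforced, and a ``classical estimator-convergence result'' for your frozen estimator $\zeta_\ell^\infty$ is unavailable without such marking (reliability alone does not force an estimator to vanish). Fortunately, this stronger statement is not needed: since the upper bound~\eqref{eq:goal-upper-bound} carries $\eta_\ell(u_\ell)$ as a separate factor, it suffices that the bracket $\eta_\ell(u_\ell)^2+\zeta_\ell(z_\ell[u_\ell])^2$ stays bounded, which follows from quasi-monotonicity~\eqref{eq:quasi-monotonicity-sum} of Lemma~\ref{lemma:quasi-monotonicity}, while $\eta_\ell(u_\ell)\to0$ (in the first case of the dichotomy) or even the full combined estimator $\to0$ (in the second case) drives the product to zero. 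This is exactly how the paper concludes; your frozen-dual comparison via $g[u_\infty]$ and Lemma~\ref{lemma:stab:z} is the right tool, but it belongs inside the combined-estimator contraction of Step~1 rather than in a (generally unattainable) proof that $\zeta_\ell\to0$.
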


To formulate our main result on optimal convergence rates, we need some additional notation.
For $N \in \N_0$, let $\T_N:=\set{\TT\in\T}{\#\TT-\#\TT_0\le N}$ denote the (finite) set of all refinements of $\TT_0$, which have at most $N$ elements more
than $\TT_0$. 
For $s,t>0$, we define
\begin{align*}
	\norm{u}{\mathbb{A}_s} 
	&:= \sup_{N\in\N_0} \Big((N+1)^s \min_{\TT_\coarse\in\T_N} \eta_\coarse(u_\coarse) \Big) \in \R_{\geq 0} \cup \{ \infty \},\\
	\norm{z[u]}{\mathbb{A}_t} 
	&:= \sup_{N\in\N_0} \Big((N+1)^t\min_{\TT_\coarse\in\T_N} \zeta_\coarse(z_\coarse[u]) \Big) \in \R_{\geq 0} \cup \{ \infty \}.
\end{align*}
In explicit terms, e.g., $\norm{u}{\mathbb{A}_s} < \infty$ means that an algebraic convergence rate $\OO(N^{-s})$ for the error estimator $\eta_\ell$ is possible, if the optimal triangulations are chosen.

The following theorem concludes the main results of the present work:

\begin{theorem}\label{theorem:main}
For any compact operator $\KK : H_0^1(\Omega) \to H^{-1}(\Omega)$, there even hold the following statements {\rm(i)}--{\rm(ii)}, which improve Proposition~\ref{prop:convergenceA}{\rm(ii)}:

{\rm(i)} For all $0 < \theta \le 1$ and $\Cmark \ge 1$, there exists $\ell_0 \in \N_0$, $\Clin > 0$, and $0 < \qlin < 1$ such that Algorithm~\ref{algorithm} guarantees that, for all $\ell, n \in \N_0$ with $n \ge \ell \ge \ell_0$,
\begin{align}\label{eq:linear}
 \eta_n(u_n) \, \big[ \eta_n(u_n)^2 + \zeta_n(z_n[u_n])^2 \, \big]^{1/2}
 \le \Clin \qlin^{n-\ell} \, \eta_\ell(u_\ell) \, \big[ \eta_\ell(u_\ell)^2 + \zeta_\ell(z_\ell[u_\ell])^2 \, \big]^{1/2}.
\end{align}

{\rm(ii)} There exist $\Copt > 0$ and $\ell_0 \in \N_0$ such that Algorithm~\ref{algorithm} guarantees that, for all $0 < \theta < \theta_{\rm opt} := (1+\Cstab^2\Cdrel^2)^{-1}$, for all $s,t > 0$ with $\norm{u}{\mathbb{A}_s} + \norm{z[u]}{\mathbb{A}_t} < \infty$, and all $\ell \in \N_0$ with $\ell \geq \ell_0$, it holds that
\begin{align}\label{eq:optimal}
	\eta_\ell(u_\ell)\big[ \, \eta_\ell(u_\ell)^2 + \zeta_\ell(z_\ell[u_\ell])^2 \, \big]^{1/2}
	\le 
	\Copt \, \norm{u}{\mathbb{A}_s}(\norm{u}{\mathbb{A}_s}+\norm{z}{\mathbb{A}_t})\,(\#\TT_\ell-\#\TT_0)^{-\alpha},
\end{align}
where $\alpha := \min\{2s,s+t\}$.

The constants $\Clin$, $\qlin$, and $\ell_0$ depend only on $\theta$, $\qred$, $\Cstab$, $\Crel$, the bilinear form~$a(\cdot,\cdot)$, and the compact operator $\KK$.
The constant $\Copt$ depends only on $\theta$, $\Cnvb$, $\Cmark$, $\Clin$, $\qlin$, $\ell_0$, and \eqref{assumption:stab}--\eqref{assumption:drel}.
\end{theorem}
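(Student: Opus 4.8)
The plan is to reduce the analysis to the rate-optimality framework for a \emph{fixed} linear goal functional developed in~\cite{fpz2014,ffghp2016} and to treat the linearization of $G$ around the moving discrete solution $u_\ell$ as a controllable perturbation. Two inputs drive the argument. First, a~priori convergence: Proposition~\ref{prop:convergenceA} (and its proof) yields $\eta_\ell(u_\ell)\to0$ and $\zeta_\ell(z_\ell[u_\ell])\to0$, hence reliability~\eqref{assumption:rel} gives $\enorm{u-u_\ell}\to0$ and, by continuous (in fact, compact) dependence of the linearized dual problem on its data, $z[u_\ell]\to z[u]$ in $H^1_0(\Omega)$ together with the corresponding right-hand side data $g[u_\ell]\to g[u]$, $\g[u_\ell]\to\g[u]$ in the norms controlling the estimator $\zeta$. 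Second, Lipschitz dependence: boundedness of $\KK$ and the Lax--Milgram lemma applied to differences of dual problems give $\enorm{z[w]-z[\widetilde w]}+\enorm{z_\coarse[w]-z_\coarse[\widetilde w]}\lesssim\enorm{w-\widetilde w}$ and the analogous bound for the data. Fixing $\ell_0$ so that these perturbations are below a prescribed threshold for all $\ell\ge\ell_0$ is what will allow the perturbation terms to be absorbed below.

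\emph{Linear convergence~(i).} The marking in step~{\rm(iv)} is exactly the one from~\cite{fpz2014}: in every step at least one of the \emph{full} Dörfler criteria~\eqref{eq:doerfler:u} or~\eqref{eq:doerfler:uz} is enforced, so $\N_0=B\cup B'$, where $\eta$-Dörfler holds on $B$ and the combined Dörfler holds on $B'$. Combining stability~\eqref{assumption:stab} with the Lipschitz estimates above, one gets
\begin{align*}
	\big|\zeta_\coarse(\UU_\coarse,z_\coarse[u_\coarse])-\widehat\zeta_\coarse(\UU_\coarse,z_\coarse[u])\big|
	\lesssim\enorm{u-u_\coarse}\lesssim\eta_\coarse(u_\coarse)
	\qquad\text{for all }\UU_\coarse\subseteq\TT_\coarse,
\end{align*}
where $\widehat\zeta_\coarse$ is the dual estimator for the \emph{fixed} limiting data $g[u],\g[u]$; hence $\zeta_\ell(z_\ell[u_\ell])$ and the marking~\eqref{eq:doerfler:uz} may be replaced --- up to equivalence constants and a slightly reduced marking parameter --- by $\widehat\zeta$ and $\widehat\xi_\coarse^2:=\eta_\coarse(u_\coarse)^2+\widehat\zeta_\coarse(z_\coarse[u])^2$, which together with $\eta$ satisfies the genuine axioms~\eqref{assumption:stab}--\eqref{assumption:drel} for fixed right-hand sides. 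For this fixed-data pair the machinery of~\cite{fpz2014,axioms} (estimator reduction and quasi-orthogonality, plus quasi-monotonicity of the estimators) provides constants $C\ge1$ and $0<q<1$ with $\eta_n(u_n)\le Cq^{\#(B\cap[\ell,n))}\eta_\ell(u_\ell)$ and $\widehat\xi_n\le Cq^{\#(B'\cap[\ell,n))}\widehat\xi_\ell$ for $\ell_0\le\ell\le n$; since $\#(B\cap[\ell,n))+\#(B'\cap[\ell,n))\ge n-\ell$ and $\eta_\ell\le\widehat\xi_\ell$, multiplying the two estimates and undoing the equivalence yields~\eqref{eq:linear} with $\qlin:=q$. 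The choice of $\ell_0$ serves precisely to absorb the residual moving-data remainders (of size $\lesssim\eta_\ell(u_\ell)$, hence driven by the linearly convergent primal estimator) into the contraction via a discrete convolution of geometric series.

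\emph{Optimal rates~(ii).} Here one follows the classical rate-optimality argument of~\cite{ckns2008,fpz2014} for the fixed-data pair $(\eta,\widehat\zeta)$. Discrete reliability~\eqref{assumption:drel} together with $\theta<\theta_{\rm opt}=(1+\Cstab^2\Cdrel^2)^{-1}$ yields optimality of Dörfler marking: if $\TT_\star\in\T(\TT_j)$ makes the primal (resp.\ combined) estimator small enough compared with $\eta_j(u_j)$ (resp.\ $\widehat\xi_j$), then the separating set $\TT_j\backslash\TT_\star$ satisfies~\eqref{eq:doerfler:u} (resp.\ \eqref{eq:doerfler:uz}). Given $s,t>0$ with $\norm{u}{\mathbb{A}_s}+\norm{z[u]}{\mathbb{A}_t}<\infty$, overlaying $\TT_j$ via~\eqref{eq:mesh-overlay} with optimal meshes realizing these classes and invoking optimality of Dörfler marking, the quasi-minimal cardinality of $\overline\MM_j^u,\overline\MM_j^{uz}$, and the choice $\#\MM_j=2\min\{\#\overline\MM_j^u,\#\overline\MM_j^{uz}\}$ in step~{\rm(iv)} bound $\#\MM_j$ by
\begin{align*}
	\min\big\{\,\norm{u}{\mathbb{A}_s}^{1/s}\,\eta_j(u_j)^{-1/s}\,,\ \ \norm{u}{\mathbb{A}_s}^{1/s}\,\widehat\xi_j^{-1/s}+\norm{z[u]}{\mathbb{A}_t}^{1/t}\,\widehat\xi_j^{-1/t}\,\big\}.
\end{align*}
Summing over $j<\ell$, inserting the mesh-closure estimate~\eqref{eq:mesh-closure} and the linear convergence from part~(i) to sum the resulting geometric series, and finally undoing the equivalence $\widehat\xi_j\simeq\eta_j(u_j)+\zeta_j(z_j[u_j])$, one obtains~\eqref{eq:optimal}: the first entry of the minimum yields the exponent $2s$ (it governs $\eta_\ell(u_\ell)^2$), the second yields $s+t$ (it governs the mixed term $\eta_\ell(u_\ell)\,\zeta_\ell(z_\ell[u_\ell])$), whence $\alpha=\min\{2s,s+t\}$; the appearance of $\norm{z[u]}{\mathbb{A}_t}$ --- defined via the exact data $u$ rather than the discrete data $u_\ell$ --- is legitimate exactly because of the perturbation estimate above.

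The genuinely new difficulty, and the step I expect to be the main obstacle, is that the dual problem --- hence the estimator $\zeta_\ell$, its data, and the marking~\eqref{eq:doerfler:uz} --- changes in every step, so none of the cited results applies verbatim. The crux is to justify the reduction to a fixed dual estimator rigorously and, above all, to verify that the resulting moving-data remainders of size $\OO(\enorm{u-u_\ell})=\OO(\eta_\ell(u_\ell))$ do not spoil either the contraction in~(i) or the cardinality bound in~(ii); compactness of $\KK$ is used here, in particular, to secure the a~priori convergence and the strong convergence of the dual data in the norms required by the stability and discrete-reliability properties of $\zeta$.
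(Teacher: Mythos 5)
Your reduction identifies the right difficulty but does not resolve it, and the point where you defer (``the main obstacle'', ``the crux'') is exactly where the paper's proof does its actual work. Your marking-transfer step for part~(i) needs the perturbation between the moving-data dual estimator $\zeta_\ell(\cdot,z_\ell[u_\ell])$ and your fixed-data estimator $\widehat\zeta_\ell(\cdot,z_\ell[u])$ to be \emph{small relative to the global estimator}; you only record it as $\OO(\enorm{u-u_\ell})=\OO(\eta_\ell(u_\ell))$, and with a non-small constant such a perturbation destroys the D\"orfler property (after a Young-inequality transfer you are left with $\theta'[\eta_\ell^2+\widehat\zeta_\ell^2]-C\,\eta_\ell^2$, which need not be nonnegative), so ``up to a slightly reduced marking parameter'' is not justified. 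What is needed is the quantitative compactness gain: plain convergence $\enorm{u-u_\ell}\to0$ of the algorithm (itself requiring an argument, since the two markings alternate; cf.\ Proposition~\ref{prop:plain_convergence}) combined with the compactness of $\KK$ and of its adjoint $\KK'$ (Schauder) via \cite[Lemma~3.5]{ffp2014}, yielding $\enorm{z_\ell[u]-z_\ell[u_\ell]}\le\kappa_\ell^{1/2}\enorm{u-u_\ell}$ with $\kappa_\ell\to0$, i.e.\ estimate~\eqref{eq2:orth:dual}. You assert ``strong convergence of the dual data in the required norms'' but never derive this $o(\enorm{u-u_\ell})$ bound, and without it neither your marking transfer nor your absorption ``via a discrete convolution of geometric series'' goes through. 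The paper avoids the fixed-data reduction of the marking altogether: it proves a combined quasi-orthogonality for the inexact dual sequence (Lemma~\ref{lemma:orth:dual}), feeds it into a contraction for the quasi-errors (Proposition~\ref{prop:linear-convergence}), and obtains~\eqref{eq:linear} by counting which of the two markings occurred and multiplying the two contractions.

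For part~(ii) there are two further gaps. First, the algorithm's sets $\overline\MM_j^{uz}$ are quasi-minimal for the \emph{moving-data} criterion~\eqref{eq:doerfler:uz}, so to exploit quasi-minimality you must exhibit a competitor set satisfying~\eqref{eq:doerfler:uz} itself, not a fixed-data D\"orfler property; transferring back again runs into the same perturbation issue, and raising the marking parameter is not an option since $\theta<\theta_{\rm opt}$ leaves no slack. The paper instead verifies \eqref{assumption:stab}--\eqref{assumption:drel} for the combined moving-data estimator (Proposition~\ref{prop:axioms-product-estimator}, which needs only boundedness of $\KK$) so that Lemma~\ref{lemma:doerfler} applies directly to it. Second, your cardinality bound for $\#\MM_j$ is a minimum of terms scaled by $\eta_j^{-1/s}$, $\widehat\xi_j^{-1/s}$, $\widehat\xi_j^{-1/t}$, but the only linear convergence available from part~(i) is for the \emph{product} $\eta_j[\eta_j^2+\zeta_j^2]^{1/2}$; your geometric-series summation therefore does not close as written. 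The paper's comparison result (Lemma~\ref{step1:optimal}) is tailored precisely to this: it produces a set $\RR_j$ satisfying one of the two markings with $\#\RR_j\lesssim\big(\eta_j(u_j)[\eta_j(u_j)^2+\zeta_j(z_j[u_j])^2]^{1/2}\big)^{-1/\alpha}$, $\alpha=\min\{2s,s+t\}$, which is the form that the product's linear convergence can sum. So the overall architecture you propose could likely be repaired, but as it stands the decisive estimates are missing, and the paper's route (inexact-dual quasi-orthogonality plus a product-adapted comparison lemma) is genuinely different from, and more robust than, the fixed-data reduction you sketch.
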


\begin{remark}\label{rem:algorithm}
{\rm(i)} We note that, according to the considered dual problem~\eqref{eq:dual:formulation}, the goal functional~\eqref{eq:goalfunctional} is linearized around $u_\ell$ in each step of the adaptive algorithm. Hence, we must enforce that the linearization error satisfies that $\enorm{z_\ell[u]-z_\ell[u_\ell]} \to 0$ as $\ell \to \infty$. This is guaranteed by Proposition~\ref{prop:convergenceA}{\rm(ii)} and Theorem~\ref{theorem:main}{\rm(i)}, since both factors of the product involve the primal error estimator $\eta_\ell(u_\ell)$.

{\rm(ii)} For a linear goal functional and hence $z_\ell[u] = z_\ell[u_\ell]$, the work~\cite{fpz2014} considers plain $\zeta_\ell^2$ (instead of $\eta_\ell^2 + \zeta_\ell^2$) for the D\"orfler marking~\eqref{eq:doerfler:uz} and then proves a convergence behavior $|G(u)-G(u_\ell)| \lesssim \eta_\ell \zeta_\ell = \OO\big((\#\TT_\ell)^{-\alpha}\big)$ for the estimator product, where $\alpha = s + t$ with $s > 0$ being the optimal rate for the primal problem and $t > 0$ being the optimal rate for the dual problem. Instead, Algorithm~\ref{algorithm} will only lead to $\OO\big((\#\TT_\ell)^{-\alpha}\big)$, where $\alpha = \min\{2s, s+t\}$; see Theorem~\ref{theorem:main}{\rm(ii)}.

{\rm(iii)} The marking strategy proposed in~\cite{bet2011}, where D\"orfler marking is carried out for the weighted estimator
\begin{equation}\label{eq:bet1}
	\rho_\coarse(T,u_\coarse,z_\coarse[u_\coarse])^2
	:=
	\eta_\coarse(T,u_\coarse)^2
	\zeta_\coarse(z_\coarse[u_\coarse])^2
	+
	\eta_\coarse(u_\coarse)^2
	\zeta_\coarse(T,z_\coarse[u_\coarse])^2,
\end{equation}
might be unable to ensure convergence of the linearization error $\enorm{z_\ell[u]-z_\ell[u_\ell]}$, since in every step D\"orfler marking is implied for either $\eta_\coarse(u_\coarse)$ \emph{or}  $\zeta_\coarse(z_\coarse[u_\coarse])$; cf.~\cite{fpz2014}.
If one instead considers
\begin{equation}\label{eq:bet2}
\begin{split}
	\varrho_\coarse(T,u_\coarse,z_\coarse[u_\coarse])^2
	&:=
	\eta_\coarse(T,u_\coarse)^2
	\big[ \eta_\coarse(u_\coarse)^2 + \zeta_\coarse(z_\coarse[u_\coarse])^2 \big] \\
	&\qquad+
	\eta_\coarse(u_\coarse)^2
	\big[ \eta_\coarse(T,u_\coarse)^2 + \zeta_\coarse(T,z_\coarse[u_\coarse])^2 \big],
\end{split}
\end{equation}
the present results and the analysis in~\cite{fpz2014} make it clear that this strategy implies convergence with rate $\min\{2s,s+t\}$.
Details are omitted.
\end{remark}

\subsection{Alternative adaptive algorithm}
From the upper bound \eqref{eq:goal-upper-bound} in Proposition~\ref{prop:convergenceA}(i), we can further estimate the goal error by
\begin{align*}
\big| G(u) - G(u_\coarse) \big|
\leq
\Crel^\prime \, \big[ \, \eta_\coarse(u_\coarse)^2 + \zeta_\coarse(z_\coarse[u_\coarse])^2 \, \big]
\quad\text{for all }
\TT_\coarse \in \T.
\end{align*}
This suggests the following algorithm, which marks elements solely based on the combined estimator.

\begin{algorithm}\label{algorithm:combined}
	{\bfseries Input:} Adaptivity parameters $0 < \theta \le 1$ and $\Cmark \ge 1$, initial mesh $\TT_0$.
	\newline
	{\bfseries Loop:} For all $\ell = 0,1,2,\dots$, perform the following steps~{\rm(i)--(iv)}:
	\begin{itemize}
		\item[\rm(i)] Compute the discrete solutions $u_\ell, z_\ell[u_\ell] \in \XX_\ell$ to~\eqref{eq:discrete:formulation}.
		\item[\rm(ii)] Compute the refinement indicators $\eta_\ell(T,u_\ell)$ and $\zeta_\ell(T, z_\ell[u_\ell])$ for all $T \in \TT_\ell$.
		\item[\rm(iii)] Determine a set $\MM_\ell \subseteq \TT_\ell$ of up to the multiplicative constant $\Cmark$ minimal cardinality such that
		\begin{equation}\label{eq:doerfler:dual}
		\theta \, \big[ \, \eta_\ell(u_\ell)^2 + \zeta_\ell(z_\ell[u_\ell])^2 \, \big] 
		\le \big[ \, \eta_\ell(\MM_\ell,u_\ell)^2 + \zeta_\ell(\MM_\ell,z_\ell[u_\ell])^2 \, \big].
		\end{equation}
		\item[\rm(iv)] Generate $\TT_{\ell+1} := \refine(\TT_\ell, \MM_\ell)$.
	\end{itemize}
	{\bfseries Output:} Sequence of triangulations $\TT_\ell$ with corresponding discrete solutions $u_\ell$ and $z_\ell[u_\ell]$ as well as error estimators $\eta_\ell(u_\ell)$ and $\zeta_\ell(z_\ell[u_\ell])$.
\end{algorithm}

First, we note that Algorithm~\ref{algorithm:combined} also leads to convergence.

\begin{proposition}
	\label{prop:convergenceB}
	For any bounded linear operator $\KK$, there hold the following statements {\rm(i)}--{\rm(ii)}:
	
	{\rm(i)} There exists a constant $\Crel^\prime > 0$ such that
		\begin{equation}\label{eq:goal-upper-boundB}
		\big| G(u) - G(u_\coarse) \big|
		\le
		\Crel^\prime \, \big[ \, \eta_\coarse(u_\coarse)^2 + \zeta_\coarse(z_\coarse[u_\coarse])^2 \, \big]
		\quad \text{for all } \TT_\coarse \in \T.
		\end{equation}
	
	{\rm(ii)} For all $0 < \theta \leq 1$ and $1 < \Cmark \leq \infty$, Algorithm~\ref{algorithm:combined} leads to convergence
		\begin{equation}
		\label{eq:plain-convergenceB}
			| G(u) - G(u_\ell) | 
			\le
			\Crel^\prime \, \big[ \,
				\eta_\ell(u_\ell)^2 + \zeta_\ell(z_\ell[u_\ell])^2 \,
			\big]
			\longrightarrow 0
			\quad\text{as }
			\ell \to \infty.
		\end{equation}
	The constant $\Crel^\prime$ depends only on the constants from~\eqref{assumption:stab}--\eqref{assumption:rel}, the bilinear form~$a(\cdot,\cdot)$, and the boundedness of $\KK$.
\end{proposition}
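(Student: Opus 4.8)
The plan is to leverage Proposition~\ref{prop:convergenceA} as much as possible, since the two statements are closely related. For part~(i), I would start from the a~posteriori bound~\eqref{eq:ansart:aposteriori}, namely
\[
 \big| G(u) - G(u_\coarse) \big|
 \lesssim \enorm{u - u_\coarse} \, \big[ \, \enorm{z[u_\coarse] - z_\coarse[u_\coarse]} + \enorm{u - u_\coarse} \, \big].
\]
Using reliability~\eqref{assumption:rel} for both the primal error $\enorm{u-u_\coarse} \le \Crel\,\eta_\coarse(u_\coarse)$ and the dual error $\enorm{z[u_\coarse]-z_\coarse[u_\coarse]} \le \Crel\,\zeta_\coarse(z_\coarse[u_\coarse])$ (applied with $w = u_\coarse$), this already yields~\eqref{eq:goal-upper-bound} of Proposition~\ref{prop:convergenceA}(i), and then one simply bounds $\eta_\coarse(u_\coarse) \le [\eta_\coarse(u_\coarse)^2 + \zeta_\coarse(z_\coarse[u_\coarse])^2]^{1/2}$ to absorb the linear factor into the squared bracket, giving~\eqref{eq:goal-upper-boundB} with the same constant $\Crel^\prime$. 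Strictly speaking part~(i) here is an immediate consequence of Proposition~\ref{prop:convergenceA}(i), so the proof can be reduced to one line.

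For part~(ii), the task reduces to showing that the combined estimator $\mu_\ell^2 := \eta_\ell(u_\ell)^2 + \zeta_\ell(z_\ell[u_\ell])^2$ tends to zero along the sequence produced by Algorithm~\ref{algorithm:combined}. The key observation is that the D\"orfler marking~\eqref{eq:doerfler:dual} is exactly the condition~\eqref{eq:doerfler:uz} used in Algorithm~\ref{algorithm} — the combined estimator $\mu_\ell$ plays the role of a single error estimator for an (abstract) problem whose ``solution component'' is the pair $(u_\ell, z_\ell[u_\ell])$. The steps I would carry out: (a) verify that $\mu_\ell$ satisfies suitable stability and reduction properties inherited from~\eqref{assumption:stab} and~\eqref{assumption:red}, keeping in mind that the dual component involves $z_\ell[u_\ell]$ whose right-hand side depends on $u_\ell$ and hence changes with $\ell$; (b) invoke an estimator-reduction / a~priori-convergence argument (as in~\cite{ffp2014,axioms}, or whatever auxiliary lemma Section~\ref{sec:auxiliary} provides) to conclude $\mu_\ell \to 0$; (c) combine with part~(i) to deduce $|G(u)-G(u_\ell)| \le \Crel^\prime\,\mu_\ell^2 \to 0$.

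The main obstacle is step~(a)–(b): controlling the linearization error in the dual component. Since the dual problem~\eqref{eq:dual:formulation} is linearized around the moving point $u_\ell$, the quantity $z_\ell[u_\ell]$ is not a Galerkin approximation of a fixed function, so the usual a~priori convergence results for a fixed variational problem do not apply verbatim. The resolution is that convergence of $u_\ell \to u_\infty$ (in the sense of the standard adaptive-FEM limit, where $u_\infty \in \XX_\infty := \overline{\bigcup_\ell \XX_\ell}$) together with the Lipschitz dependence $\enorm{z[w]-z[w']} \lesssim \enorm{w-w'}$ — which follows from well-posedness of~\eqref{eq:dual:formulation} and boundedness of $b(\cdot,\cdot)$ — transfers convergence to the dual component; the D\"orfler marking~\eqref{eq:doerfler:dual} forces $\mu_\ell$ down to the limiting value, which must then be zero by reliability. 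This is precisely the argument that makes the ``additional linearization error'' tractable, as advertised in the introduction, and it is where the boundedness (not yet compactness) of $\KK$ enters. I expect the formal write-up to reuse an auxiliary lemma from Section~\ref{sec:auxiliary}, so in the final text part~(ii) should be a short deduction once that lemma is in place.
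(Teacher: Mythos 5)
Your proposal is correct and follows essentially the same route as the paper: part~(i) is exactly the one-line consequence of Proposition~\ref{prop:convergenceA}(i) via $\eta_\coarse(u_\coarse) \le [\eta_\coarse(u_\coarse)^2+\zeta_\coarse(z_\coarse[u_\coarse])^2]^{1/2}$, and part~(ii) rests, as you anticipate, on the observation that the marking~\eqref{eq:doerfler:dual} coincides with~\eqref{eq:doerfler:uz}, so that the paper's plain-convergence result (Proposition~\ref{prop:plain_convergence}, proved via a~priori convergence in the discrete limit space, the stability of the linearized dual solutions from Lemma~\ref{lemma:stab:z}, and the generalized estimator reduction) gives $\eta_\ell(u_\ell)+\zeta_\ell(z_\ell[u_\ell])\to 0$. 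One cosmetic remark: the vanishing of the combined estimator already follows from the contraction-up-to-a-zero-sequence argument; reliability~\eqref{assumption:rel} is only used afterwards to identify $u=u_\infty$ (and $z[u]=z_\infty$), not to show that the estimator limit is zero.
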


The following theorem proves linear convergence of Algorithm~\ref{algorithm:combined} with almost optimal convergence rate, where we note that $\beta \leq \alpha$ for the rates in~\eqref{eq:optimal} and \eqref{eq:combined:optimal}.
By abuse of notation we use the same constants as in Theorem~\ref{theorem:main}.
 
\begin{theorem}\label{theorem:combined}
	For any compact operator $\KK$, there even hold the following statements {\rm(i)}--{\rm(ii)}, which improve Proposition~\ref{prop:convergenceB}(ii):
	
	{\rm(i)} For all $0 < \theta \le 1$ and $\Cmark \ge 1$, there exists $\ell_0 \in \N_0$, $\Clin > 0$, and $0 < \qlin < 1$ such that Algorithm~\ref{algorithm:combined} guarantees that, for all $\ell, n \in \N_0$ with $n \ge \ell \ge \ell_0$,
	\begin{align}\label{eq:combined:linear}
	\big[ \eta_n(u_n)^2 + \zeta_n(z_n[u_n])^2 \, \big]
	\leq
	\Clin \qlin^{n-\ell} \, \big[
	\eta_\ell(u_\ell)^2 + \zeta_\ell(z_\ell[u_\ell])^2
	\, \big].
	\end{align}
	
	{\rm(ii)} There exist $\Copt > 0$ and $\ell_0 \in \N_0$ such that Algorithm~\ref{algorithm} guarantees that, for all $0 < \theta < \theta_{\rm opt} := (1+\Cstab^2\Cdrel^2)^{-1}$, for all $s,t > 0$ with $\norm{u}{\mathbb{A}_s} + \norm{z[u]}{\mathbb{A}_t} < \infty$, and all $\ell \in \N_0$ with $\ell \geq \ell_0$, it holds that
	\begin{align}\label{eq:combined:optimal}
	\big[ \, \eta_\ell(u_\ell)^2 + \zeta_\ell(z_\ell[u_\ell])^2 \, \big]
	\leq
	\Copt \, (\norm{u}{\mathbb{A}_s}^2 + \norm{z}{\mathbb{A}_t}^2) \, (\#\TT_\ell-\#\TT_0)^{-\beta},
	\end{align}
	where $\beta := \min\{2s,2t\}$.
	
	The constants $\Clin$, $\qlin$, and $\ell_0$ depend only on $\theta$, $\qred$, $\Cstab$, $\Crel$, the bilinear form~$a(\cdot,\cdot)$, and the compact operator $\KK$.
	The constant $\Copt$ depends only on $\theta$, $\Cnvb$, $\Cmark$, and \eqref{assumption:stab}--\eqref{assumption:drel}.
\end{theorem}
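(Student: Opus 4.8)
The proof parallels that of Theorem~\ref{theorem:main}; here I only sketch the main steps and the necessary modifications. Throughout, I would work with the single combined quantity $\mu_\ell^2 := \eta_\ell(u_\ell)^2 + \zeta_\ell(z_\ell[u_\ell])^2$, for which Algorithm~\ref{algorithm:combined} performs D\"orfler marking~\eqref{eq:doerfler:dual} directly. This situation is in fact simpler than that of Theorem~\ref{theorem:main}, where the two separate criteria~\eqref{eq:doerfler:u}--\eqref{eq:doerfler:uz} have to be reconciled; the price for the simplification is the weaker rate $\beta=\min\{2s,2t\}$ for the sum $\mu_\ell^2$, instead of $\alpha=\min\{2s,s+t\}$ for the product-type quantity in~\eqref{eq:optimal}.

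\smallskip
\noindent\emph{Ad~{\rm(i)}.} First I would combine stability~\eqref{assumption:stab} and reduction~\eqref{assumption:red} with the Young inequality to obtain, for every $\delta>0$, the estimator reduction
\begin{align*}
 \mu_{\ell+1}^2
 &\le (1+\delta)\Big( \mu_\ell^2 - (1-\qred^2)\big[\eta_\ell(\MM_\ell,u_\ell)^2+\zeta_\ell(\MM_\ell,z_\ell[u_\ell])^2\big]\Big)
 \\&\qquad + C(\delta)\big( \enorm{u_{\ell+1}-u_\ell}^2 + \enorm{z_{\ell+1}[u_\ell]-z_\ell[u_\ell]}^2 \big),
\end{align*}
where the contribution of the \emph{moving} linearization point has already been absorbed by splitting $z_{\ell+1}[u_{\ell+1}]-z_\ell[u_\ell]=z_{\ell+1}[u_{\ell+1}-u_\ell]+(z_{\ell+1}[u_\ell]-z_\ell[u_\ell])$ and using that $w\mapsto z_\fine[w]$ is linear and uniformly bounded (by ellipticity of $a(\cdot,\cdot)$ and boundedness of $\KK$), so that $\enorm{z_{\ell+1}[u_{\ell+1}-u_\ell]}\lesssim\enorm{u_{\ell+1}-u_\ell}$. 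By the D\"orfler property~\eqref{eq:doerfler:dual}, the first bracket is at most $\big(1-(1-\qred^2)\theta\big)\mu_\ell^2$, and it remains to control the two increments. For this one establishes an \emph{eventual quasi-orthogonality}: there is $\ell_0\in\N_0$ such that for all $n\ge\ell\ge\ell_0$ the sum $\sum_{k=\ell}^n\big(\enorm{u_{k+1}-u_k}^2+\enorm{z_{k+1}[u_{k+1}]-z_k[u_k]}^2\big)$ is bounded by $\eps\,\big(\enorm{u-u_\ell}^2+\enorm{z[u]-z_\ell[u_\ell]}^2\big)$ up to lower-order terms. For the non-symmetric form $a(\cdot,\cdot)$ and for the $k$-dependence of the dual right-hand side $b(\cdot,u_k)+b(u_k,\cdot)$, this rests on the a priori convergences $u_k\to u$ and $z_k[u_k]\to z[u]$ from Proposition~\ref{prop:convergenceB} and --- crucially --- on the \emph{compactness} of $\KK$, which makes the linearized dual solution operator $w\mapsto z[w]$ compact and thus lets the corresponding perturbation be made arbitrarily small for $k$ large (an Aubin--Nitsche-type argument). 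Using reliability~\eqref{assumption:rel} to bound the right-hand side by $\mu_\ell^2$, and choosing $\delta,\eps$ small enough, turns the above into a contraction up to a summable remainder and yields~\eqref{eq:combined:linear}.

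\smallskip
\noindent\emph{Ad~{\rm(ii)}.} Here I would run the standard rate-optimality machinery for the single estimator $\mu_\ell$. The main input is a \emph{discrete reliability} for the combined quantity, $\enorm{u_\fine-u_\coarse}^2+\enorm{z_\fine[u_\fine]-z_\coarse[u_\coarse]}^2\lesssim\eta_\coarse(\TT_\coarse\backslash\TT_\fine,u_\coarse)^2+\zeta_\coarse(\TT_\coarse\backslash\TT_\fine,z_\coarse[u_\coarse])^2$, which follows from~\eqref{assumption:drel} for \emph{both} estimators after splitting $z_\fine[u_\fine]-z_\coarse[u_\coarse]=z_\fine[u_\fine-u_\coarse]+(z_\fine[u_\coarse]-z_\coarse[u_\coarse])$ and estimating the linearization part by $\enorm{u_\fine-u_\coarse}$; together with stability~\eqref{assumption:stab} this implies that, for $\theta<\theta_{\rm opt}=(1+\Cstab^2\Cdrel^2)^{-1}$, the elements refined when passing from $\TT_\ell$ to the overlay with any near-optimal mesh satisfy the D\"orfler criterion~\eqref{eq:doerfler:dual}. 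To link $\mu_\ell$ with the optimality classes $\mathbb{A}_s,\mathbb{A}_t$ --- which are defined through $\zeta_\coarse(z_\coarse[u])$, i.e.\ with the \emph{exact} linearization --- one uses $\enorm{z_\coarse[u]-z_\coarse[u_\coarse]}\lesssim\enorm{u-u_\coarse}\lesssim\eta_\coarse(u_\coarse)$ and stability to obtain $\mu_\coarse^2\lesssim\eta_\coarse(u_\coarse)^2+\zeta_\coarse(z_\coarse[u])^2$; overlaying an $\eta$-optimal mesh (rate $s$) with a $\zeta(z[u])$-optimal mesh (rate $t$) and invoking~\eqref{eq:mesh-overlay} then produces a mesh on which $\mu^2\lesssim(\norm{u}{\mathbb{A}_s}^2+\norm{z}{\mathbb{A}_t}^2)\,N^{-\min\{2s,2t\}}$. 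Combining this with quasi-monotonicity of the estimators under refinement (a consequence of~\eqref{assumption:stab}, \eqref{assumption:red}, \eqref{assumption:rel}, and in particular immediate for $\mu_\ell$ from part~{\rm(i)}), the mesh-closure estimate~\eqref{eq:mesh-closure}, and a geometric summation via the linear convergence~\eqref{eq:combined:linear} (whence $\ell\ge\ell_0$), the usual counting argument --- relating $\#\TT_\ell-\#\TT_0$ to $\sum_{k<\ell}\#\MM_k$ and then to $\mu_\ell^{-1/\beta}$ --- yields~\eqref{eq:combined:optimal}. Finally, Proposition~\ref{prop:convergenceB}{\rm(i)} turns this into the goal-error bound $|G(u)-G(u_\ell)|\lesssim(\#\TT_\ell-\#\TT_0)^{-\beta}$.

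\smallskip
\noindent\emph{Main obstacle.} The crux throughout is the \emph{moving} linearization point $u_\ell$ in the dual problem. In~{\rm(i)} it enters the eventual quasi-orthogonality, and making the resulting perturbation quantitatively small enough to close the contraction is precisely where compactness of $\KK$ --- rather than the mere boundedness that already suffices for plain convergence in Proposition~\ref{prop:convergenceB} --- is essential; in~{\rm(ii)} it forces the reconciliation of the algorithmic quantity $\mu_\ell$ (built from $z_\ell[u_\ell]$) with the optimality classes (built from $z_\coarse[u]$), which is only possible because part~{\rm(i)} already provides the linear convergence needed in the counting argument. Everything else is a routine specialization of the proof of Theorem~\ref{theorem:main}.
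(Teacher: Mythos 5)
Your proposal is correct and follows essentially the paper's own route: part~(i) is the contraction of the combined quasi-error obtained from estimator reduction plus quasi-orthogonality (the paper gets this for free by observing that the marking criterion~\eqref{eq:doerfler:dual} coincides with~\eqref{eq:doerfler:uz}, so Proposition~\ref{prop:linear-convergence} applies verbatim), and part~(ii) is the standard rate-optimality machinery for the combined estimator together with the overlay argument relating the combined approximation class to $\norm{u}{\mathbb{A}_s}$ and $\norm{z[u]}{\mathbb{A}_t}$, which the paper imports from~\cite[Theorem~4.1(ii)]{axioms} after Proposition~\ref{prop:axioms-product-estimator}. One small correction: your ``eventual quasi-orthogonality'' must be stated with a constant close to one (as in Lemma~\ref{lemma:orth:primal} and Lemma~\ref{lemma:orth:dual}), not with a small factor $\eps$ multiplying $\enorm{u-u_\ell}^2+\enorm{z[u]-z_\ell[u_\ell]}^2$ --- the latter is false already for a single increment --- but with this corrected form your contraction argument closes exactly as in Proposition~\ref{prop:linear-convergence}.
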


Note that Algorithm~\ref{algorithm:combined} has slightly lower computational costs than Algorithm~\ref{algorithm}, but achieves only a lower rate in general.
However, if there holds $s \leq t$ both algorithms achieve rate $2s$.

\subsection{Extension of analysis to compactly perturbed elliptic problems}
\label{section:garding}

For the ease of presentation, we have restricted ourselves to the case that the bilinear form $a(\cdot,\cdot)$ from~\eqref{eq:bilinear_form} is continuous~\eqref{eq:a:continuous} and elliptic~\eqref{eq:a:elliptic}. Actually, it suffices to assume that $a(\cdot,\cdot)$ is continuous and that the energy norm $\enorm{\cdot}$ induced by the principal part is an equivalent norm on $H^1_0(\Omega)$, e.g., by assuming that $\A \in L^\infty(\Omega)$ is uniformly positive definite. Then, $a(\cdot,\cdot)$ is elliptic up to some compact perturbation (and hence satisfies a G\r{a}rding inequality). A prominent example for this problem class is the Helmholtz problem.

We have to assume that the primal formulation~\eqref{eq:primal:formulation} 
is well-posed, i.e., for all $w \in H^1_0(\Omega)$ it holds that
\begin{align*}
 \big[ \,
	 a(w,v) = 0 
	 ~ \text{for all } v \in H^1_0(\Omega)
 \, \big]
	 \quad\Longrightarrow\quad
	  w = 0.
\end{align*}
Then, the Fredholm alternative and standard functional analysis imply that the primal formulation~\eqref{eq:primal:formulation} as well as the dual formulation~\eqref{eq:dual:formulation} admit unique solutions. Moreover, as soon as $\TT_\coarse$ is sufficiently fine, also the FEM problems~\eqref{eq:discrete:formulation} admit unique solutions and, more importantly, the discrete inf-sup constants are uniformly bounded from below; see, e.g.,~\cite[Section~2]{bhp2017}.

As noted in~\cite{bhp2017}, such an analytical setting requires only two minor modifications of adaptive algorithms:
\begin{itemize}
\item[\rm(a)] \emph{Step~{\rm(i)} in Algorithm~\ref{algorithm} or Algorithm~\ref{algorithm:combined}:} If the discrete solutions $u_\ell$ and $z_\ell[u_\ell]$ exist (and are hence also unique), then we proceed as before. If either $u_\ell$ or $z_\ell[u_\ell]$ does not exist, then the mesh $\TT_{\ell+1}$ is obtained by uniform refinement of $\TT_\ell$, i.e., $\MM_\ell := \TT_\ell$.
\item[\rm(b)] \emph{Step~{\rm(iv)} of Algorithm~\ref{algorithm} or step~{\rm(iii)} of Algorithm~\ref{algorithm:combined}:} Having determined a set of marked elements $\MM_\ell \subseteq \TT_\ell$, we select a superset $\MM_\ell^\# \supseteq \MM_\ell$ with $\# \MM_\ell^\# \le 2 \, \#\MM_\ell$ as well as $\MM_\ell^\# \cap \set{T \in \TT_\ell}{|T| \ge |T'| \text{ for all } T' \in \TT_\ell} \neq \emptyset$ and define the refined mesh $\TT_{\ell+1} := \refine(\TT_\ell, \MM_\ell^\#)$ via the extended set of marked elements.
\end{itemize}
It is observed in~\cite{bhp2017} that uniform refinement caused by the modification~{\rm(a)} can only occur finitely many times. Moreover, the modification~{\rm(b)} ensures that $H^1_0(\Omega) = \overline{\bigcup_{\ell=0}^\infty \XX_\ell}$ so that the adaptive algorithm converges, indeed, to the right limit. For standard adaptive FEM, it is shown in~\cite{bhp2017} that this procedure still leads to optimal convergence rates. We note that the arguments from~\cite{bhp2017} obviously extend to the present goal-oriented adaptive FEM.


\section{Numerical experiments}\label{sec:numerics}

In this section, we underline our theoretical findings by some numerical examples.
As starting point of all examples, we use equation~\eqref{eq:strongform} with $\A = I$, $\b = \boldsymbol{0}$, and $c=0$ on the unit square $\Omega = (0,1)^2$.
The initial mesh $\TT_0$ on $\Omega$ is obtained from certain uniform refinements from the mesh shown in Figure~\ref{fig:mesh}.
All examples are computed with conforming finite elements of order $p=1$ and $p=2$, as outlined in Section~\ref{subsec:fem}.

In the following, we consider the marking strategies of Algorithm~\ref{algorithm} and Algorithm~\ref{algorithm:combined} (denoted by A and B, respectively), as well as the marking strategies outlined in Remark~\ref{rem:algorithm}(iii), i.e., D\"orfler marking for \eqref{eq:bet1} and \eqref{eq:bet2}, which will be denoted by BET1 and BET2, respectively.
If not stated otherwise, the marking parameter is $\theta = 0.5$ for all experiments.

\begin{figure}
	\centering
	\includegraphics[width=0.7\linewidth]{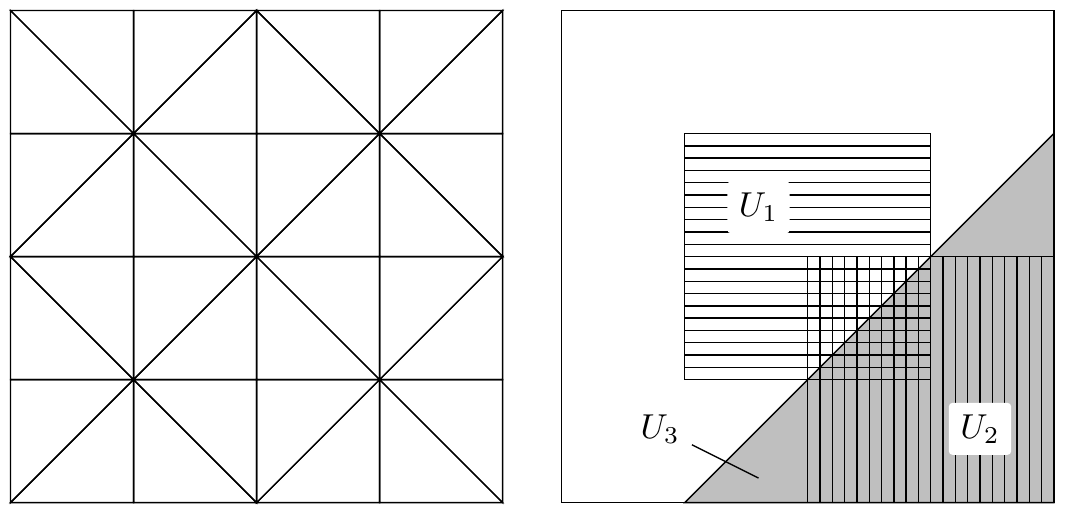}
	\caption{Initial mesh (left) and sets $U_1, U_2, U_3$ (right) on the unit square $\Omega = (0,1)^2$.}
	\label{fig:mesh}
\end{figure}

\subsection{Weighted $L^2$ norm}\label{subsec:L2}

Suppose some weight function $\lambda \in L^\infty(\Omega)$  with $\lambda \geq 0$ a.e., whose regions of discontinuity are resolved by the initial mesh $\TT_0$ (i.e., $g$ is continuous in the interior of every element of $\TT_0$).
Then, we consider the weighted $L^2$-norm
\begin{equation}\label{eq:G:example1}
	G(u)
	= \int_\Omega \lambda(x) u(x)^2 \d{x}
	= \dual{\lambda u}{u}_{H^{-1}\times H^1_0}
	= \norm{\lambda^{1/2}u}{L^2(\Omega)}^2
\end{equation}
as goal functional.
We note that $b(v,w) = \dual{\lambda v}{w}_{H^{-1}\times H^1_0}$ and hence~\eqref{eq:dual:right-hand-side} holds with $g[w] = 2\lambda w$ and $\g[w]  = 0$.
Moreover, we observe that $\KK u = \lambda u \in L^2(\Omega) \hookrightarrow H^{-1}(\Omega)$, where the embedding is compact, so that the goal functional from~\eqref{eq:G:example1} fits in the setting of Theorem~\ref{theorem:main} and Theorem~\ref{theorem:combined}.
We choose
\begin{equation*}
	\lambda(x) =
	\begin{cases}
		1, & x \in U_1,\\
		0, & x \notin U_1,
	\end{cases}
\end{equation*}
with $U_1 = (0.25,0.75)^2$.
This functional is evaluated at the solution of equation~\eqref{eq:strongform} with $f = 2x(x-1) + 2y(y-1)$ and $\f = \boldsymbol{0}$.
The solution of this equation, as well as the value of the goal functional, can be computed analytically to be $u = xy(1-x)(1-y)$ and $G(u) = \int_{U_1} u^2 \d{x} = \frac{41209}{58982400}$, respectively.
The numerical results are visualized in Figure~\ref{fig:convergencel2}.

\begin{figure}
	\centering
	\includegraphics[width=0.9\linewidth]{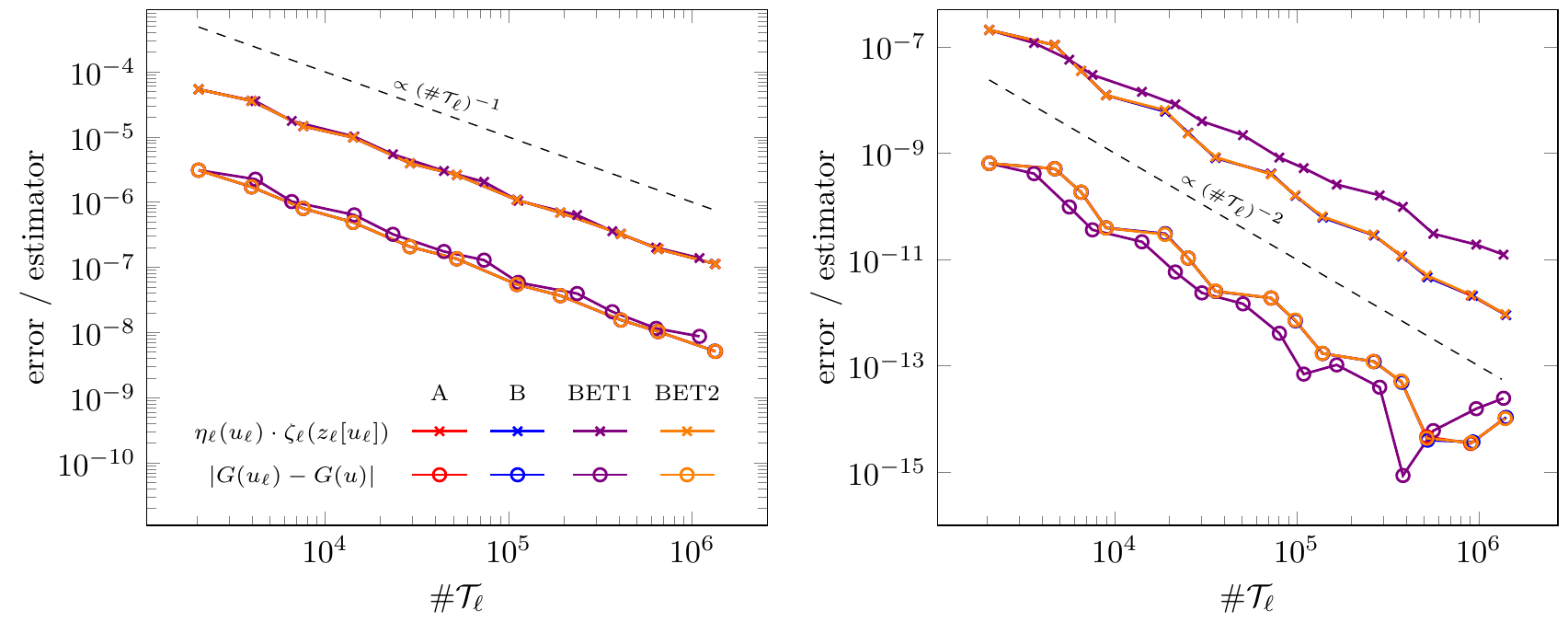}
	\caption{Convergence rates of estimator product and goal error for the problem setting in Section~\ref{subsec:L2} with $p=1$ (left) and $p=2$ (right).
	Note that the lines marked with A, B, and BET2 are almost identical.}
	\label{fig:convergencel2}
\end{figure}

\subsection{Nonlinear convection}\label{subsec:convection}

Suppose that $\boldsymbol{\lambda} \in [L^\infty(\Omega)]^2$ is some vector field, whose regions of discontinuity are resolved by the initial mesh $\TT_0$.
As goal functional we consider the nonlinear convection term
\begin{equation}\label{eq:G:example2}
	G(u)
	= \int_\Omega u(x) \boldsymbol{\lambda}(x) \cdot \nabla u(x) \d{x}
	= \dual{\boldsymbol{\lambda} \cdot \nabla u}{u}_{H^{-1}\times H^1_0}.
\end{equation}
We note that $b(v,w) = \dual{\boldsymbol{\lambda} \cdot \nabla v}{w}_{H^{-1}\times H^1_0}$ and hence~\eqref{eq:dual:right-hand-side} holds with $g[w]  = \boldsymbol{\lambda}\cdot \nabla w$ and $\g[w]  = -w\boldsymbol{\lambda}$.
Moreover, we observe that $\KK u = \boldsymbol{\lambda} \cdot \nabla u \in L^2(\Omega) \hookrightarrow H^{-1}(\Omega)$, where the embedding is compact, so that the goal functional from~\eqref{eq:G:example2} fits in the setting of Theorem~\ref{theorem:main} and Theorem~\ref{theorem:combined}.

We compute the solutions to the primal and the dual problem for $f=0$,
\begin{equation*}
	\f(x) =
	\begin{cases}
		\frac{1}{\sqrt{2}}(-1,1) & \text{if } x \in U_3,\\
		0 & \text{else},
	\end{cases}
	\quad \text{and} \quad
	\boldsymbol{\lambda} =
	\frac{\sigma}{\sqrt{2}} \binom{-1}{1}
	\text{ with }
	\sigma =
	\begin{cases}
		1 & \text{if } x \in U_2, \\
		-1 & \text{else}.
	\end{cases}
\end{equation*}
The sets $U_3 := \set{x \in \Omega}{x_1 - x_2 \geq 0.25}$ and $U_2 := (0.5,1) \!\times\! (0,0.5)$ are shown in Figure~\ref{fig:mesh}.
The numerical results are visualized in Figure~\ref{fig:convergence2}.
Note that the primal problem in this case exhibits a singularity which is not induced by the geometry and thus is not present in the dual problem.

\begin{figure}
	\centering
	\includegraphics[width=0.5\linewidth]{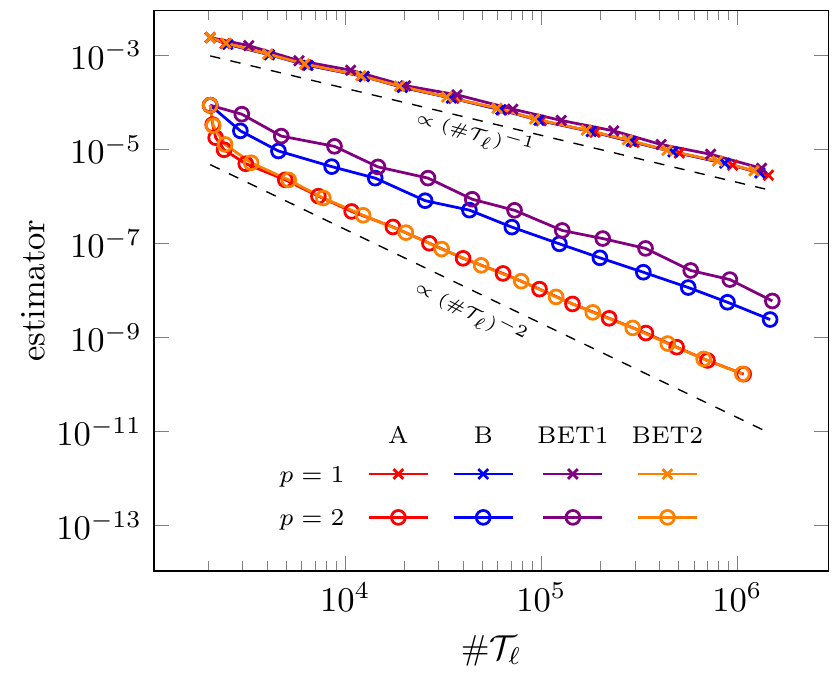}%
	\includegraphics[width=0.5\linewidth]{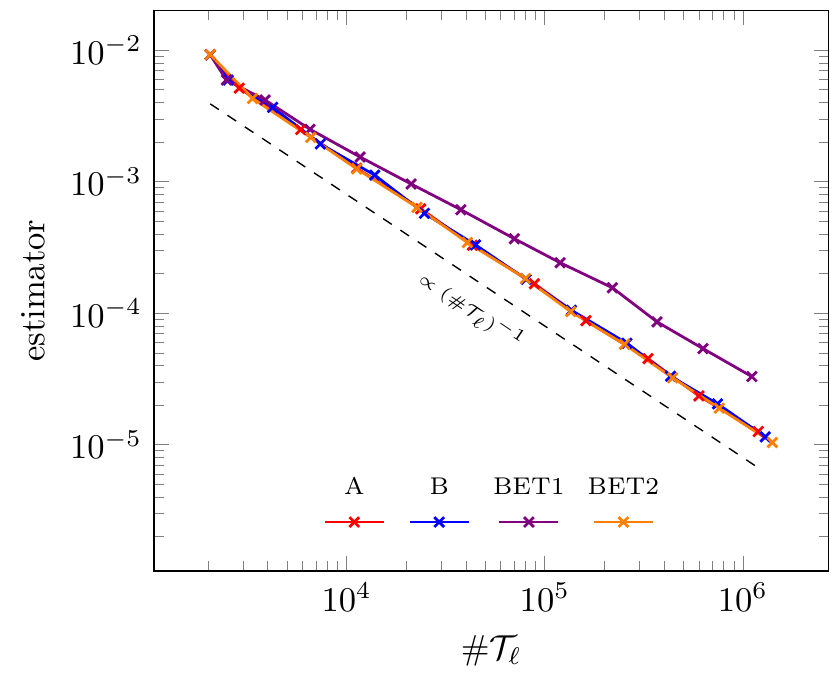}	
	\caption{Convergence rates of estimator product for the problem setting in Section~\ref{subsec:convection} (left) and Section~\ref{subsec:force} (right).}
	\label{fig:convergence2}
\end{figure}

\subsection{Force evaluation}\label{subsec:force}

Let $\varepsilon > 0$ and let $\psi$ be a cut-off function that satisfies
\begin{equation*}
	\psi(x) = 1 \text{ if } x \in U_1
	\quad \text{and} \quad
	\psi(x) = 0 \text{ if } \textrm{dist}(x,U_1) > \varepsilon.
\end{equation*}
For a given direction $\boldsymbol{\chi} \in \R^2$, consider a goal functional of the form
\begin{equation}\label{eq:G:example3}
	G(u)
	:=
	\int_\Omega \nabla \psi \cdot (\nabla u \otimes \nabla u - \tfrac{1}{2} |\nabla u|^2 I) \boldsymbol{\chi} \d{x}.
\end{equation}
This approximates the electrostatic force which is exerted by an electric potential $u$ on a charged body occupying the domain $U_1$ in direction $\boldsymbol{\chi}$ (the part of the integrand in brackets is the so-called Maxwell stress tensor).
We note that
\begin{equation*}
	b(v,w) = \int_\Omega \nabla \psi \cdot \big(\nabla v \otimes \nabla w - \tfrac{1}{2} (\nabla v \cdot \nabla w) I \big) \boldsymbol{\chi} \d{x}
\end{equation*}
and hence~\eqref{eq:dual:right-hand-side} holds with $g[w]  = 0$ and $\g[w]  = (\nabla \psi \cdot \boldsymbol{\chi}) \nabla w - (\nabla \psi \cdot \nabla w) \boldsymbol{\chi} - (\boldsymbol{\chi} \cdot \nabla w) \nabla \psi$.
We stress that the goal functional from~\eqref{eq:G:example3} does not fit in the setting of Theorem~\ref{theorem:main} and Theorem~\ref{theorem:combined}, since the corresponding operator $\KK$ is not compact. Hence, we cannot guarantee optimal rates for our Algorithms~\ref{algorithm}~and~\ref{algorithm:combined}.
However, Proposition~\ref{prop:convergenceA} and Proposition~\ref{prop:convergenceB} still guarantee convergence of our algorithms.

For our experiments, we choose $\boldsymbol{\chi} = \frac{1}{\sqrt{2}} (1,1)^\top$, $f=1$, and $\f = \boldsymbol{0}$.
Furthermore, we choose $\psi$ to be in $\XX_0$ for $p = 1$, i.e., $\psi$ is piecewise linear, and $\varepsilon$ is chosen such that $\psi$ falls off to $0$ exactly within one layer of elements around $U_1$ in $\TT_0$.

The results can be seen in Figure~\ref{fig:convergence2}.

\subsection{Discussion of numerical experiments}

We clearly see from Figures~\ref{fig:convergencel2}--\ref{fig:convergence2} that our Algorithm~\ref{algorithm} and BET2 outperform BET1 and sometimes even Algorithm~\ref{algorithm:combined}.
From Figure~\ref{fig:thetavariation}, where we plot estimator product (and, if available, goal error) for different parameters $\theta = 0.1,0.2, \ldots, 1.0$, we see that this behavior does not depend on the marking parameter $\theta$, generally speaking.
It is striking that the strategy BET1 with $\theta < 1$ fails to drive down the estimator product at the same speed as uniform refinement.
This is likely due to the fact that the linearization error $\enorm{z_\ell[u]-z_\ell[u_\ell]}$ is disregarded; see Remark~\ref{rem:algorithm}.

In Figure~\ref{fig:cumulativecosts}, we plot the cumulative costs
\begin{equation}
\label{eq:cumulative}
	\sum_{\ell \in S[\tau]} \#\TT_\ell,
	\quad\text{with}\quad
	S[\tau] := \set{\ell \in \N}{\textrm{error}_\ell \geq \tau},
\end{equation}
where $\textrm{error}_\ell$ is either the estimator product $\eta_\ell(u_\ell) \zeta_\ell(z_\ell[u_\ell])$, or the goal error $|G(u) - G(u_\ell)|$ in the $\ell$-th step of the adaptive algorithm.
We see that for the setting from Section~\ref{subsec:L2}, where no singularity occurs, optimal costs are achieved by uniform refinement, as is expected.
For the goal error, which is not known in general, the strategy BET1 performs better than our Algorithms~\ref{algorithm}~and~\ref{algorithm:combined}.
However, for the estimator product, which is the relevant quantity in most applications (since the error is unknown), it is inferior.
In the other settings, where there is a singularity, our Algorithms~\ref{algorithm}~and~\ref{algorithm:combined} achieve their minimal cost around the value $0.7$ for the marking parameter $\theta$.

\begin{figure}
	\centering
	\includegraphics[width=0.99\linewidth]{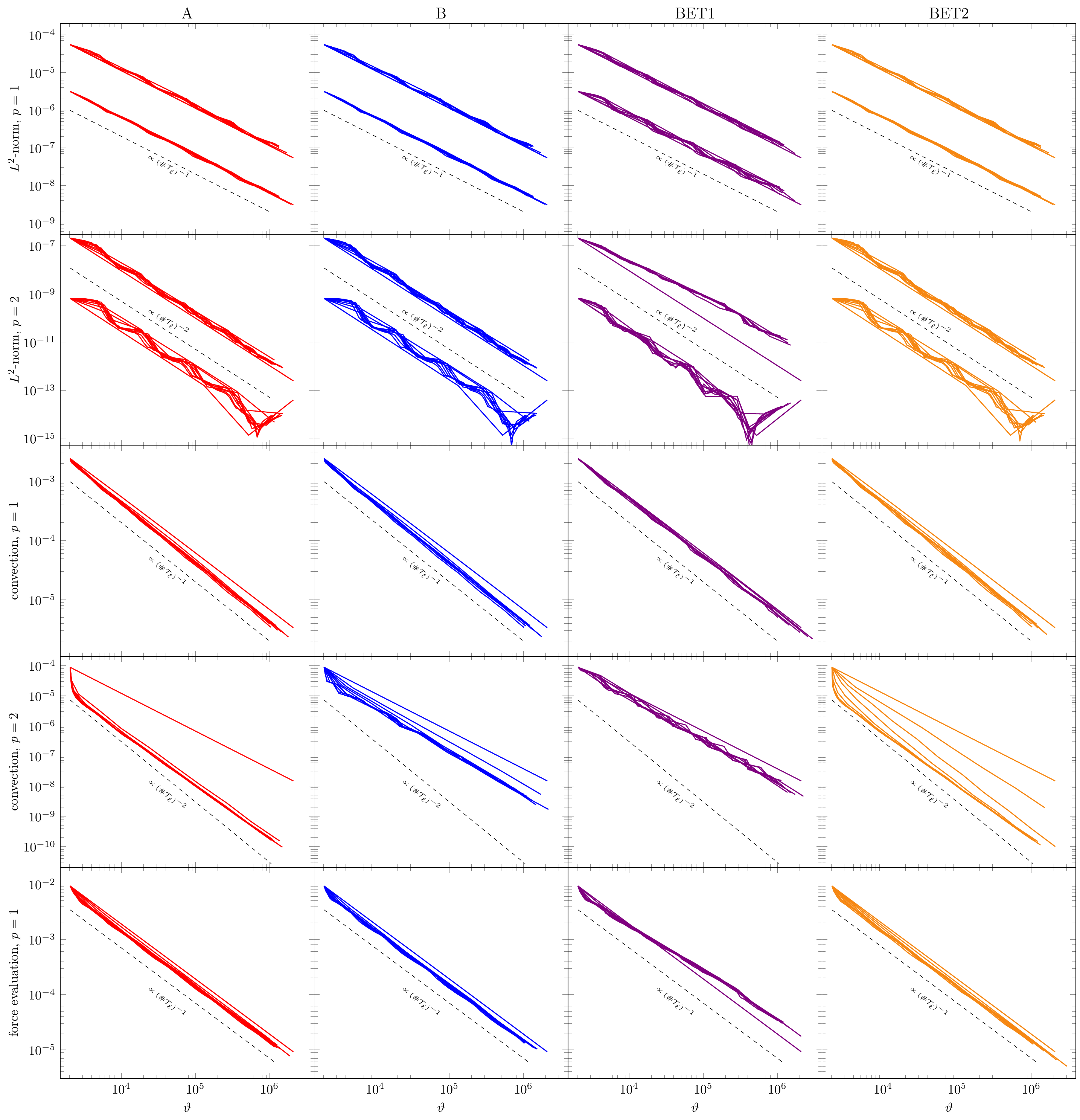}
	\caption{Variation of $\theta$ from $0.1$ (light) to $1.0$ (dark) in steps of $0.1$.
	Left: Setting from Section~\ref{subsec:L2} with $p=2$, where the upper lines represent the estimator product and the lower ones the goal error.
	Middle: Estimator product for the setting from Section~\ref{subsec:convection} with $p=2$.
	Right: Estimator product for the setting from Section~\ref{subsec:force} with $p=1$.}
	\label{fig:thetavariation}
\end{figure}

\begin{figure}
	\centering
	\includegraphics[width=0.8\linewidth]{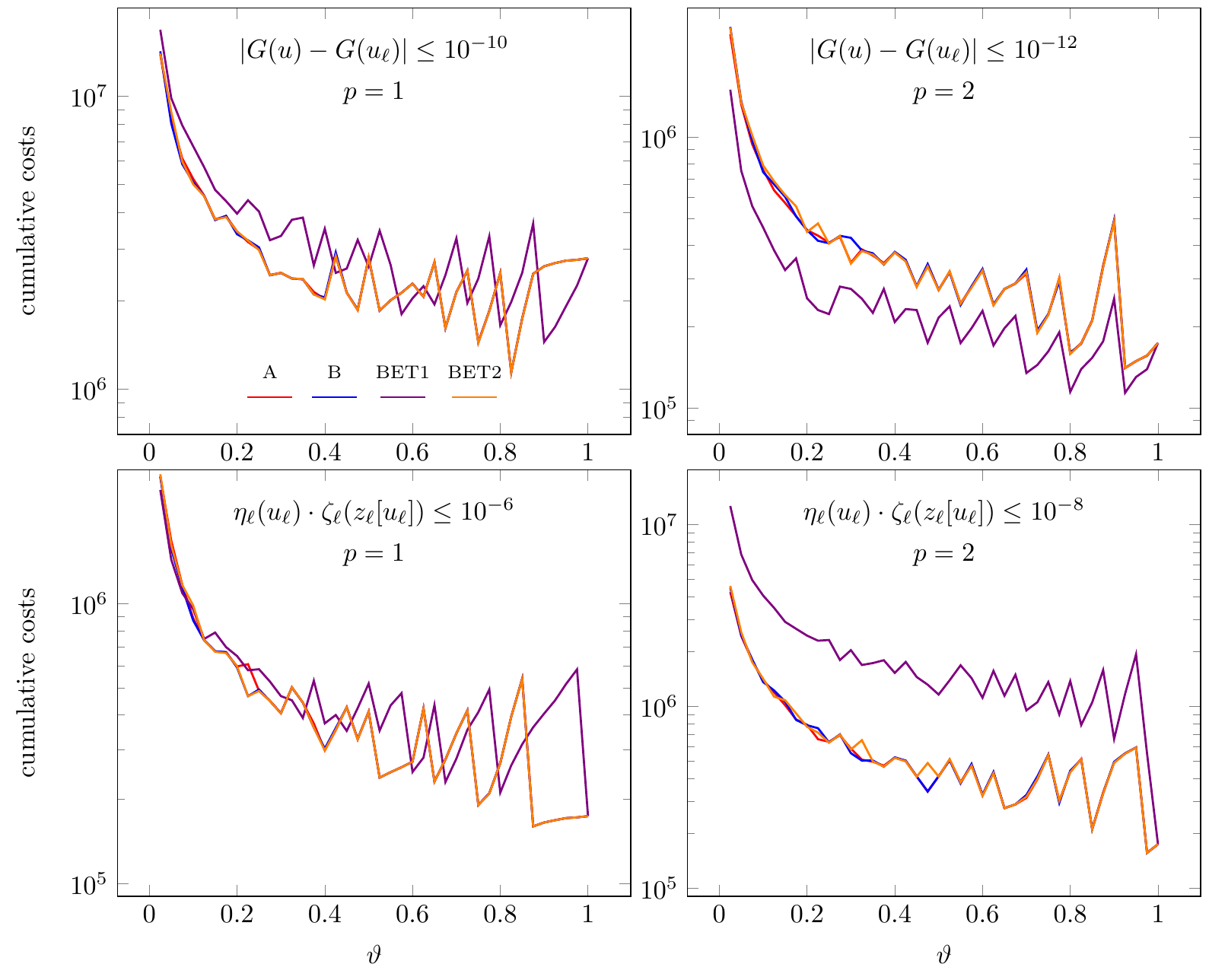}
	\caption{Cumulative costs~\eqref{eq:cumulative} for estimator product and goal error for the setting of Section~\ref{subsec:L2} (top), for the estimator product for the setting of Section~\ref{subsec:convection} (bottom left), and for the estimator product for the setting of Section~\ref{subsec:force} (bottom right).
	The parameters $\theta$ are chosen uniformly in $[0.1,1]$ with stepsize $0.1$.}
	\label{fig:cumulativecosts}
\end{figure}


\section{Auxiliary results}\label{sec:auxiliary}

\subsection{Axioms of adaptivity}

Clearly, $z[w]$ and $z_\coarse[w]$ depend linearly on $w$ (since $\KK$ is linear and hence $b(\cdot,\cdot)$ is bilinear). Moreover, we have the following stability estimates.

\begin{lemma}\label{lemma:stab:z}
	For all $w \in H^1_0(\Omega)$ and all $\TT_\coarse \in \T(\TT_0)$, it holds that 
	\begin{align}\label{eq:stab:z}
	C_1^{-1} \, \enorm{z_\coarse[w]} \le \enorm{z[w]} \le C_2 \, \enorm{w}, 
	\end{align}
	where $C_1 > 0$ depends only on $a(\cdot,\cdot)$, while $C_2 > 0$ depends additionally on the boundedness of $\KK$.
\end{lemma}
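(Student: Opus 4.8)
The plan is to establish the two inequalities separately, both by elementary energy arguments that are uniform in the mesh.

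For the upper bound $\enorm{z[w]} \le C_2 \, \enorm{w}$, I would test the dual problem~\eqref{eq:dual:formulation} with $v = z[w]$ itself, giving $a(z[w],z[w]) = b(z[w],w) + b(w,z[w])$. Ellipticity~\eqref{eq:a:elliptic} bounds the left-hand side from below by $C_{\rm ell}\,\norm{z[w]}{H^1(\Omega)}^2$, while continuity of $b(\cdot,\cdot)$ (a consequence of the boundedness of $\KK$, say with constant $C_b$) bounds the right-hand side from above by $2\,C_b\,\norm{z[w]}{H^1(\Omega)}\,\norm{w}{H^1(\Omega)}$. Dividing by $\norm{z[w]}{H^1(\Omega)}$ and invoking the equivalence of $\enorm{\cdot}$ and $\norm{\cdot}{H^1(\Omega)}$ on $H^1_0(\Omega)$ yields the claim, with $C_2$ depending only on $a(\cdot,\cdot)$ and on $C_b$.

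For the lower bound $\enorm{z_\coarse[w]} \le C_1 \, \enorm{z[w]}$, the key observation is that the second equation in~\eqref{eq:discrete:formulation} is precisely the Galerkin discretization of~\eqref{eq:dual:formulation}. Introducing the transposed bilinear form $a^\top(v,v') := a(v',v)$ — which is again continuous with constant $C_{\rm cnt}$ and, since the principal part of $a(\cdot,\cdot)$ is symmetric, elliptic with constant $C_{\rm ell}$ — the functions $z[w]$ and $z_\coarse[w]$ solve $a^\top(z[w],v) = b(v,w)+b(w,v)$ for all $v \in H^1_0(\Omega)$ and $a^\top(z_\coarse[w],v_\coarse) = b(v_\coarse,w)+b(w,v_\coarse)$ for all $v_\coarse \in \XX_\coarse$, respectively. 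Hence the C\'ea lemma applies and gives $\enorm{z[w] - z_\coarse[w]} \le \Ccea \inf_{v_\coarse \in \XX_\coarse}\enorm{z[w] - v_\coarse} \le \Ccea \, \enorm{z[w]}$, where the last step uses $v_\coarse = 0 \in \XX_\coarse$ and $\Ccea$ (the C\'ea constant, up to norm-equivalence factors $C_{\rm cnt}/C_{\rm ell}$) depends only on $a(\cdot,\cdot)$. The triangle inequality then yields $\enorm{z_\coarse[w]} \le (1+\Ccea)\,\enorm{z[w]} =: C_1 \, \enorm{z[w]}$, and $C_1$ is manifestly independent of $\TT_\coarse \in \T(\TT_0)$ and of $\KK$.

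I do not expect any genuine obstacle: the statement is a routine consequence of Lax--Milgram and C\'ea. The only points requiring minor care are the transposition of $a(\cdot,\cdot)$ — necessary because the dual problem carries its test function in the first argument — and the fact that $\enorm{\cdot}$ is only the principal-part energy norm, so the coercivity estimates are carried out first in $\norm{\cdot}{H^1(\Omega)}$ and then transferred via norm equivalence. All constants stay uniform over $\T(\TT_0)$ because neither the Lax--Milgram nor the C\'ea constant depends on the mesh.
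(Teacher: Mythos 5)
Your proposal is correct and follows essentially the same route as the paper: the bound $\enorm{z[w]} \lesssim \enorm{w}$ is obtained exactly as in the paper by testing the dual problem with $v = z[w]$ and using ellipticity of $a(\cdot,\cdot)$ together with continuity of $b(\cdot,\cdot)$, while the discrete bound $\enorm{z_\coarse[w]} \lesssim \enorm{z[w]}$, which the paper simply cites as stability of the Galerkin method, you derive via the C\'ea lemma (with $v_\coarse = 0$ as comparison function) and the triangle inequality — a valid instantiation whose constant depends only on $a(\cdot,\cdot)$ and is uniform over $\T(\TT_0)$. The only cosmetic remark is that your appeal to symmetry of the principal part for the ellipticity of the transposed form is unnecessary, since $a^\top(v,v) = a(v,v)$ holds for any bilinear form.
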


\begin{proof}
	The definition of the dual problem shows that
	\begin{align*}
	\enorm{z[w]}^2 \lesssim a(z[w], z[w]) \reff{eq:dual:formulation}= b(z[w],w) + b(w, z[w]) \lesssim \enorm{w} \, \enorm{z[w]}
	\end{align*}
	and hence $\enorm{z[w]} \lesssim \enorm{w}$. Moreover, the stability of the Galerkin method yields that
	$\enorm{z_\coarse[w]} \lesssim \enorm{z[w]}.$
	This concludes the proof.
\end{proof}

Next, we show that the combined estimator for the primal and dual problem satisfies the assumptions~\eqref{assumption:stab}--\eqref{assumption:drel}, where particular emphasis is put on~\eqref{assumption:rel}--\eqref{assumption:drel}.
For the ease of presentation (and by abuse of notation), we use the same constants as for the original properties~\eqref{assumption:stab}--\eqref{assumption:drel}, even though they now depend additionally on the bilinear form $a(\cdot,\cdot)$ and the boundedness of $\KK$.

\begin{proposition}\label{prop:axioms-product-estimator}
	Suppose~\eqref{assumption:stab}--\eqref{assumption:drel} for $\eta_\coarse$ and $\zeta_\coarse$.
	Let $\TT_\coarse \in \T$ and $\TT_\fine \in \T(\TT_\coarse)$.
	Then, \eqref{assumption:stab}--\eqref{assumption:drel} hold also for the combined estimator $\big[ \eta_\coarse(\cdot)^2 + \zeta_\coarse(\cdot)^2 \big]^{1/2}$:
	\begin{enumerate}
		\item[\eqref{assumption:stab}]
		For all $v_\fine, w_\fine \in \XX_\fine$, $v_\coarse, w_\coarse \in \XX_\coarse$, and $\UU_\coarse \subseteq \TT_\fine \cap \TT_\coarse$, it holds that
		\begin{align*}
		&\Big| \big[ \eta_\fine(\UU_\coarse, v_\fine)^2 + \zeta_\fine(\UU_\coarse, w_\fine)^2 \big]^{1/2}
		- \big[ \eta_\coarse(\UU_\coarse, v_\coarse)^2 + \zeta_\coarse(\UU_\coarse, w_\coarse)^2 \big]^{1/2} \Big| \\
		& \hspace{250pt} \le
		\Cstab \, \big[ \enorm{v_\fine - v_\coarse} + \enorm{w_\fine - w_\coarse} \big].
		\end{align*}
		
		\item[\eqref{assumption:red}]
		For all $v_\coarse, w_\coarse \in \XX_\coarse$, it holds that
		\begin{equation*}
		\big[ \eta_\fine(\TT_\fine \backslash \TT_\coarse, v_\coarse)^2 + \zeta_\fine(\TT_\fine \backslash \TT_\coarse, w_\coarse)^2 \big]^{1/2}
		\leq
		\qred \, \big[ \eta_\coarse(\TT_\coarse \backslash \TT_\fine, v_\coarse)^2 + \zeta_\coarse(\TT_\coarse \backslash \TT_\fine, w_\coarse)^2 \big]^{1/2}.
		\end{equation*}
		
		\item[\eqref{assumption:rel}]
		The Galerkin solutions $u_\coarse, z_\coarse[u_\coarse] \in \XX_\coarse$ to~\eqref{eq:discrete:formulation} satisfy that
		\begin{equation*}
		\enorm{u - u_\coarse} + \enorm{z[u] - z_\coarse[u_\coarse]} + \enorm{z[u_\coarse] - z_\coarse[u_\coarse]}
		\leq
		\Crel \, \big[ \eta_\coarse(u_\coarse)^2 + \zeta_\coarse(z_\coarse[u_\coarse])^2 \big]^{1/2}.
		\end{equation*} 
		
		\item[\eqref{assumption:drel}]
		The Galerkin solutions $u_\coarse, z_\coarse[u_\coarse] \in \XX_\coarse$ and $u_\fine, z_\fine[u_\fine] \in \XX_\fine$ to~\eqref{eq:discrete:formulation} satisfy that
		\begin{equation*}
		\enorm{u_\fine - u_\coarse} + \enorm{z_\fine[u_\fine] - z_\coarse[u_\coarse]}
		\leq
		\Cdrel \, \big[ \eta_\coarse(\TT_\coarse \backslash \TT_\fine, u_\coarse)^2 + \zeta_\coarse(\TT_\coarse \backslash \TT_\fine, z_\coarse[u_\coarse])^2 \big]^{1/2}.
		\end{equation*} 
	\end{enumerate}
\end{proposition}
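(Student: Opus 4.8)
The plan is to verify the four axioms for the combined estimator one at a time, reducing each to the corresponding axiom for $\eta_\coarse$ and $\zeta_\coarse$ together with the stability estimates from Lemma~\ref{lemma:stab:z}. For \eqref{assumption:stab}, I would use the elementary inequality $\big| (a^2+b^2)^{1/2} - (c^2+d^2)^{1/2} \big| \le |a-c| + |b-d|$ (which follows from the reverse triangle inequality in $\R^2$ together with $\sqrt{x^2+y^2}\le|x|+|y|$, or directly from $1$-Lipschitz continuity of the Euclidean norm and equivalence of $\ell^1$ and $\ell^2$ norms on $\R^2$) to split the difference of the combined quantities into the $\eta$-difference plus the $\zeta$-difference, and then apply \eqref{assumption:stab} for each estimator separately. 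The reduction axiom \eqref{assumption:red} is even more immediate: square both sides, add the two reduction inequalities for $\eta$ and $\zeta$, and pull the common factor $\qred^2$ out of the sum.

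The genuinely substantive point is \eqref{assumption:rel}, because the left-hand side now contains \emph{three} error contributions, including the linearization term $\enorm{z[u_\coarse] - z_\coarse[u_\coarse]}$ that does not appear in the original axioms. I would bound the first term by $\Crel\,\eta_\coarse(u_\coarse)$ directly via \eqref{assumption:rel}. For the third term, reliability for the dual problem gives $\enorm{z[u_\coarse] - z_\coarse[u_\coarse]} \le \Crel\,\zeta_\coarse(z_\coarse[u_\coarse])$. The middle term is handled by the triangle inequality $\enorm{z[u]-z_\coarse[u_\coarse]} \le \enorm{z[u]-z[u_\coarse]} + \enorm{z[u_\coarse]-z_\coarse[u_\coarse]}$; the second summand is the term just bounded, while for the first I use that $w \mapsto z[w]$ is linear and bounded (Lemma~\ref{lemma:stab:z}), so $\enorm{z[u]-z[u_\coarse]} = \enorm{z[u-u_\coarse]} \le C_2\,\enorm{u-u_\coarse} \le C_2\Crel\,\eta_\coarse(u_\coarse)$ by \eqref{assumption:rel} again. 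Collecting, each of the three terms is bounded by a constant times $\eta_\coarse(u_\coarse)$ or $\zeta_\coarse(z_\coarse[u_\coarse])$, hence by a constant times $\big[\eta_\coarse(u_\coarse)^2 + \zeta_\coarse(z_\coarse[u_\coarse])^2\big]^{1/2}$ since $\max\{x,y\}\le (x^2+y^2)^{1/2}$; absorbing everything into a (redefined) $\Crel$ finishes this axiom.

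Finally, for discrete reliability \eqref{assumption:drel} the primal term $\enorm{u_\fine - u_\coarse} \le \Cdrel\,\eta_\coarse(\TT_\coarse\backslash\TT_\fine, u_\coarse)$ is immediate. For the dual term I again insert an intermediate solution: $\enorm{z_\fine[u_\fine] - z_\coarse[u_\coarse]} \le \enorm{z_\fine[u_\fine] - z_\fine[u_\coarse]} + \enorm{z_\fine[u_\coarse] - z_\coarse[u_\coarse]}$. The second summand is controlled by discrete reliability for the dual problem, $\le \Cdrel\,\zeta_\coarse(\TT_\coarse\backslash\TT_\fine, z_\coarse[u_\coarse])$. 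For the first summand, linearity of $w\mapsto z_\fine[w]$ and stability of the Galerkin projection give $\enorm{z_\fine[u_\fine]-z_\fine[u_\coarse]} = \enorm{z_\fine[u_\fine-u_\coarse]} \lesssim \enorm{u_\fine - u_\coarse} \lesssim \eta_\coarse(\TT_\coarse\backslash\TT_\fine, u_\coarse)$, using discrete reliability for the primal problem once more. Summing the two localized estimator contributions under a single square root and redefining $\Cdrel$ completes the argument. The main obstacle throughout is bookkeeping the constants and making sure the extra linearization/solution-difference terms are always estimated back against the \emph{coarse-mesh} estimators with the correct locality (full mesh in \eqref{assumption:rel}, the refined set $\TT_\coarse\backslash\TT_\fine$ in \eqref{assumption:drel}); the linearity of the dual solution map is the structural fact that makes all of this go through.
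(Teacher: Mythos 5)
Your proposal is correct and takes essentially the same route as the paper's proof: elementary $\ell^1$--$\ell^2$ norm inequalities for~\eqref{assumption:stab} and~\eqref{assumption:red}, and for~\eqref{assumption:rel} and~\eqref{assumption:drel} the insertion of the intermediate dual solutions $z[u_\coarse]$ resp.\ $z_\fine[u_\coarse]$, combined with linearity of the dual solution map, the stability bounds of Lemma~\ref{lemma:stab:z}, and the original reliability/discrete-reliability axioms. The only difference is presentational: you spell out the discrete-reliability case explicitly, which the paper dispatches with ``follows from the same arguments.''
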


\begin{proof}
	By the triangle inequality and $[a^2 + b^2]^{1/2} \leq a + b$, \eqref{assumption:stab} follows from stability of $\eta_\coarse$ and $\zeta_\coarse$.
	Reduction~\eqref{assumption:red} follows directly from the corresponding properties of $\eta_\coarse$ and $\zeta_\coarse$.
	For \eqref{assumption:rel}, we see with Lemma~\ref{lemma:stab:z} that
	\begin{align*}
	\enorm{z[u] - z_\coarse[u_\coarse]}
	\le \enorm{z[u] - z[u_\coarse]} + \enorm{z[u_\coarse] - z_\coarse[u_\coarse]}
	&\reff{eq:stab:z}\lesssim \enorm{u - u_\coarse} + \enorm{z[u_\coarse] - z_\coarse[u_\coarse]}.
	\end{align*}
	Hence,~\eqref{assumption:rel} follows from reliability of $\eta_\coarse$ and $\zeta_\coarse$. Discrete reliability~\eqref{assumption:drel} follows from the same arguments.
\end{proof}

In the following, we recall some basic results of~\cite{axioms}.

\begin{lemma}[quasi-monotonicity of estimators~{\cite[Lemma~3.6]{axioms}}]\label{lemma:quasi-monotonicity}
	Let $w \in H^1_0(\Omega)$.
	Let $\TT_\coarse \in \T$ and $\TT_\fine \in \T(\TT_\coarse).$
	The properties~\eqref{assumption:stab}--\eqref{assumption:rel} together with the C\'ea lemma guarantee that
	\begin{equation}\label{eq:quasi-monotonicity}
	\eta_\fine(u_\fine)^2 \le \Cmon \, \eta_\coarse(u_\coarse)^2
	\quad \text{as well as} \quad
	\zeta_\fine(z_\fine[w])^2 \le \Cmon \, \zeta_\coarse(z_\coarse[w])^2.
	\end{equation}
	Moreover, the properties~\eqref{assumption:stab}--\eqref{assumption:rel} for the combined error estimator together with the C\'ea lemma show that
	\begin{equation}
		\label{eq:quasi-monotonicity-sum}
		\eta_\fine(u_\fine)^2 + \zeta_\fine(z_\fine[u_\fine])^2
		\le
		\Cmon \, \big[ \eta_\coarse(u_\coarse)^2 + \zeta_\coarse(z_\coarse[u_\coarse])^2 \big].
	\end{equation}
	The constant $\Cmon > 0$ depends only on the properties~\eqref{assumption:stab}--\eqref{assumption:rel} and on the bilinear form $a(\cdot,\cdot)$ and the boundedness of $\KK$.\qed
\end{lemma}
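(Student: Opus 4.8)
The plan is to establish Lemma~\ref{lemma:quasi-monotonicity} by reducing the combined-estimator statement~\eqref{eq:quasi-monotonicity-sum} to the known abstract quasi-monotonicity result \cite[Lemma~3.6]{axioms}, which is exactly the assertion~\eqref{eq:quasi-monotonicity} for the individual estimators $\eta$ and $\zeta$. Indeed, \cite[Lemma~3.6]{axioms} states that whenever an estimator satisfies stability~\eqref{assumption:stab}, reduction~\eqref{assumption:red}, reliability~\eqref{assumption:rel}, together with a C\'ea-type quasi-optimality of the underlying Galerkin scheme, then its value is quasi-monotone along any chain of refinements; applying this verbatim to $\eta_\coarse(\cdot,u_\coarse)$ and, for fixed $w$, to $\zeta_\coarse(\cdot,z_\coarse[w])$ (using that $z_\coarse[w]$ is the Galerkin approximation of $z[w]$ and the C\'ea lemma applies to the dual problem as well) gives the first two inequalities directly with some constant $\Cmon$ depending only on the listed quantities.

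For~\eqref{eq:quasi-monotonicity-sum}, the key observation is that Proposition~\ref{prop:axioms-product-estimator} shows the combined quantity $[\eta_\coarse(\cdot)^2+\zeta_\coarse(\cdot)^2]^{1/2}$ satisfies the abstract axioms \eqref{assumption:stab}--\eqref{assumption:drel} as well (with constants now also depending on $a(\cdot,\cdot)$ and the boundedness of $\KK$). Hence I would simply invoke \cite[Lemma~3.6]{axioms} a second time, now for this combined estimator, treating the pair $(u_\coarse,z_\coarse[u_\coarse])$ as the relevant discrete object: stability and reduction hold by Proposition~\ref{prop:axioms-product-estimator}\eqref{assumption:stab}--\eqref{assumption:red}, reliability by Proposition~\ref{prop:axioms-product-estimator}\eqref{assumption:rel}, and the needed C\'ea estimate is just quasi-optimality of $u_\coarse$ for $u$ combined with the bound
$\enorm{z[u]-z_\coarse[u_\coarse]}\lesssim\enorm{u-u_\coarse}+\enorm{z[u_\coarse]-z_\coarse[u_\coarse]}$
already recorded in the proof of Proposition~\ref{prop:axioms-product-estimator} (via Lemma~\ref{lemma:stab:z} and the triangle inequality). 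This yields
$\eta_\fine(u_\fine)^2+\zeta_\fine(z_\fine[u_\fine])^2 \le \Cmon\,[\eta_\coarse(u_\coarse)^2+\zeta_\coarse(z_\coarse[u_\coarse])^2]$
with a single constant, after possibly enlarging $\Cmon$ to dominate the constant from the first part.

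The only genuine subtlety — and the step I expect to need the most care — is that the dual solution $z_\fine[u_\fine]$ on the fine mesh is linearized around $u_\fine$, whereas on the coarse mesh it is linearized around $u_\coarse$; these are \emph{different} continuous dual problems. So one cannot directly regard $z_\fine[u_\fine]$ as a Galerkin approximation of a single fixed $z[w]$ along the whole refinement chain, which is what the cited abstract lemma nominally requires. The resolution is that Proposition~\ref{prop:axioms-product-estimator}\eqref{assumption:rel} is precisely tailored to absorb this: it bounds $\enorm{z[u]-z_\coarse[u_\coarse]}$ (with the \emph{exact} linearization point $u$) plus the discretization-point mismatch $\enorm{z[u_\coarse]-z_\coarse[u_\coarse]}$ by the combined estimator. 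One then runs the standard quasi-monotonicity argument of \cite[Lemma~3.6]{axioms} with $z:=z[u]$ as the fixed reference object and treats $z_\coarse[u_\coarse]$ as a perturbed Galerkin approximation, the perturbation being controlled by exactly this inequality together with Lemma~\ref{lemma:stab:z}. Since \cite{axioms} is cited as a black box here, the cleanest exposition is to state that the proof follows by applying \cite[Lemma~3.6]{axioms} to the individual and to the combined estimators, using Proposition~\ref{prop:axioms-product-estimator}, and to note explicitly that the linearization mismatch is handled by the third summand in Proposition~\ref{prop:axioms-product-estimator}\eqref{assumption:rel}; the remaining manipulations are routine, and the dependence of $\Cmon$ is as claimed.
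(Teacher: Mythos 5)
Your proposal matches the paper's approach: the lemma is stated there without a written proof, being a direct citation of \cite[Lemma~3.6]{axioms} applied once to the individual estimators and once to the combined estimator, whose axioms \eqref{assumption:stab}--\eqref{assumption:rel} are supplied by Proposition~\ref{prop:axioms-product-estimator}. Your additional discussion of the linearization mismatch (fixing $z[u]$ as the reference and absorbing the perturbations via Lemma~\ref{lemma:stab:z}, the C\'ea lemma on nested spaces, and the third summand in the combined reliability) is exactly the right way to make the black-box application rigorous, so the argument is correct and essentially identical to the paper's.
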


\begin{lemma}[generalized estimator reduction~{\cite[Lemma~4.7]{axioms}}]\label{lemma:generalized_estimator_reduction}
	Let $\TT_\coarse \in \T$ and $\TT_\fine \in \T(\TT_\coarse)$. Let $v_\coarse \in \XX_\coarse$, $v_\fine \in \XX_\fine$, and $\delta > 0$.
	Then,
	\begin{itemize}
		\item $\eta_\fine(v_\fine)^2 \le (1 + \delta) \, \big[ \, \eta_\coarse(v_\coarse)^2 - (1-\qred^2) \, \eta_\coarse(\TT_\coarse \backslash \TT_\fine, v_\coarse)^2 \, \big] + (1 + \delta^{-1}) \, \Cstab^2 \, \enorm{v_\fine - v_\coarse}^2$,
		\item $\zeta_\fine(v_\fine)^2 \le (1 + \delta) \, \big[ \, \zeta_\coarse(v_\coarse)^2 - (1-\qred^2) \, \zeta_\coarse(\TT_\coarse \backslash \TT_\fine, v_\coarse)^2 \, \big] + (1 + \delta^{-1}) \, \Cstab^2 \, \enorm{v_\fine - v_\coarse}^2$.
	\end{itemize}
	If, for instance, $\theta \, \eta_\coarse(u_\coarse)^2 \le \eta_\coarse(\TT_\coarse \backslash \TT_\fine, u_\coarse)^2$ with $0 < \theta \leq 1$, then it follows that
	\begin{align}\label{eq:generalized_estimator_reduction}
	\eta_\fine(u_\fine)^2 \le q \, \eta_\coarse(u_\coarse)^2 + C \, \enorm{u_\fine - u_\coarse}^2.
	\end{align}
	In this case, it holds that $0 < q := (1 + \delta) \, \big[ \, 1 - (1-\qred^2) \, \theta \, \big] < 1$ and $C := (1 + \delta^{-1}) \, \Cstab^2$ with $\delta > 0$ being sufficiently small.\qed
\end{lemma}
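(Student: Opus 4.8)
The plan is to reproduce the by-now standard estimator-reduction argument (cf.~\cite{ckns2008}) within the axiomatic framework of~\cite{axioms} — indeed, the assertion is~\cite[Lemma~4.7]{axioms}. It suffices to establish the bound for $\eta$, the one for $\zeta$ being word-for-word the same. The two ingredients are the elementary Young inequality $(x+y)^2 \le (1+\delta)\,x^2 + (1+\delta^{-1})\,y^2$ and a disjoint decomposition of the fine mesh. First, I would apply stability~\eqref{assumption:stab} to the trivial pair $\TT_\fine \in \T(\TT_\fine)$ with $\UU_\coarse = \TT_\fine$ to obtain the fixed-mesh stability $|\eta_\fine(v_\fine) - \eta_\fine(v_\coarse)| \le \Cstab\,\enorm{v_\fine - v_\coarse}$ (recall $v_\coarse \in \XX_\coarse \subseteq \XX_\fine$), and hence, invoking Young's inequality \emph{once},
\begin{align*}
 \eta_\fine(v_\fine)^2 \le (1+\delta)\,\eta_\fine(v_\coarse)^2 + (1+\delta^{-1})\,\Cstab^2\,\enorm{v_\fine - v_\coarse}^2.
\end{align*}
It then remains to bound $\eta_\fine(v_\coarse)^2$ by $\eta_\coarse(v_\coarse)^2 - (1-\qred^2)\,\eta_\coarse(\TT_\coarse\backslash\TT_\fine, v_\coarse)^2$. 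To this end, split $\TT_\fine = (\TT_\fine\cap\TT_\coarse)\,\cup\,(\TT_\fine\backslash\TT_\coarse)$: on the unrefined elements, choosing $v_\fine = v_\coarse$ in~\eqref{assumption:stab} forces the exact identity $\eta_\fine(\TT_\fine\cap\TT_\coarse, v_\coarse) = \eta_\coarse(\TT_\fine\cap\TT_\coarse, v_\coarse)$; on the refined ones, reduction~\eqref{assumption:red} gives $\eta_\fine(\TT_\fine\backslash\TT_\coarse, v_\coarse)^2 \le \qred^2\,\eta_\coarse(\TT_\coarse\backslash\TT_\fine, v_\coarse)^2$. Adding the two and using $\eta_\coarse(\TT_\fine\cap\TT_\coarse, v_\coarse)^2 = \eta_\coarse(v_\coarse)^2 - \eta_\coarse(\TT_\coarse\backslash\TT_\fine, v_\coarse)^2$ yields the claimed estimate for $\eta_\fine(v_\coarse)^2$, and inserting it into the previous display proves the first bullet; the case of $\zeta$ is identical.

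For the concluding assertion, I would insert the D\"orfler bound $\theta\,\eta_\coarse(u_\coarse)^2 \le \eta_\coarse(\TT_\coarse\backslash\TT_\fine, u_\coarse)^2$ into the first bullet with $v_\coarse = u_\coarse$, $v_\fine = u_\fine$. Since $1-\qred^2 > 0$, this bounds the bracket by $[\,1-(1-\qred^2)\,\theta\,]\,\eta_\coarse(u_\coarse)^2$ and hence gives~\eqref{eq:generalized_estimator_reduction} with $q = (1+\delta)\,[\,1-(1-\qred^2)\,\theta\,]$ and $C = (1+\delta^{-1})\,\Cstab^2$. Finally, $0 < \theta \le 1$ and $0 < \qred < 1$ imply $0 < 1-(1-\qred^2)\,\theta < 1$, so that $q < 1$ once $\delta > 0$ is chosen small enough, while $q, C > 0$ is immediate.

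I do not expect a serious obstacle; the only point that needs care is the bookkeeping of constants. In particular, one must resist applying fixed-mesh stability and Young's inequality separately on $\TT_\fine\cap\TT_\coarse$ and $\TT_\fine\backslash\TT_\coarse$, which would introduce a spurious factor $2$ in front of $\Cstab^2$; instead the perturbation step should be carried out once, globally on $\TT_\fine$, and only afterwards should one exploit that~\eqref{assumption:stab} degenerates to an exact identity on the unrefined elements $\TT_\fine\cap\TT_\coarse$. Everything else is routine manipulation of the disjoint decompositions $\TT_\coarse = (\TT_\fine\cap\TT_\coarse)\,\cup\,(\TT_\coarse\backslash\TT_\fine)$ and $\TT_\fine = (\TT_\fine\cap\TT_\coarse)\,\cup\,(\TT_\fine\backslash\TT_\coarse)$.
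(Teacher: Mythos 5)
Your proof is correct; the paper itself states this lemma without proof, citing \cite[Lemma~4.7]{axioms}, and your argument is essentially that standard one, correctly adapted to the form of the axioms used here: global fixed-mesh stability obtained from~\eqref{assumption:stab} with $\TT_\coarse=\TT_\fine$ plus a single Young inequality, then the exact identity on $\TT_\fine\cap\TT_\coarse$ (stability with $v_\fine=v_\coarse$) and reduction~\eqref{assumption:red} on $\TT_\fine\setminus\TT_\coarse$, combined via the disjoint splitting of $\TT_\coarse$. The concluding step with the D\"orfler property and the choice $q=(1+\delta)[1-(1-\qred^2)\theta]<1$ for sufficiently small $\delta>0$, $C=(1+\delta^{-1})\Cstab^2$, is also handled correctly.
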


\begin{lemma}[optimality of D\"orfler marking~{\cite[Proposition~4.12]{axioms}}]\label{lemma:doerfler}
Suppose stability~\eqref{assumption:stab} and discrete reliability~\eqref{assumption:drel}. For all $0<\theta<\theta_{\rm opt}:=(1+\Cstab^2\Cdrel^2)^{-1}$, there exists some $0<\kappa_{\rm opt}<1$ such that for all $\TT_\ell\in\T$ and all $\TT_\fine \in \T(\TT_\ell)$, it holds that
\begin{align}
 \eta_{\fine}(u_\fine)^2 
 &\le \kappa_{\rm opt} \,\eta_{\ell}(u_\ell)^2
 \quad \Longrightarrow \quad
 \theta\,\eta_{\ell}(u_\ell)^2
 \le \eta_{\ell}(\TT_\ell \backslash \TT_\fine,u_\ell)^2,
 \\
 \begin{split}
 \big[ \eta_{\fine}(u_\fine)^2 + \zeta_{\fine}(z_\fine[u_\fine])^2 \big]
 &\le \kappa_{\rm opt} \, \big[ \eta_\ell^2 + \zeta_{\ell}(z_\ell[u_\ell])^2 \big]
 \\
 &\hspace*{-25mm}\Longrightarrow \quad
 \theta \, \big[ \eta_\ell^2 + \zeta_{\ell}(z_\ell[u_\ell])^2 \big]
 \le \big[ \eta_{\ell}(\TT_\ell \backslash \TT_\fine,u_\ell)^2 + \zeta_{\ell}(\TT_\ell \backslash \TT_\fine,z_\ell[u_\ell])^2 \big].
 \qquad \qed
\end{split}
\end{align}
\end{lemma}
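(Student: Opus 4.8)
We only describe the argument for the first implication; the second one follows by the identical scheme applied to the combined quantities. Fix $\TT_\ell \in \T$ and $\TT_\fine \in \T(\TT_\ell)$, write $\RR := \TT_\ell \setminus \TT_\fine$ for the set of refined elements, and suppose $\eta_\fine(u_\fine)^2 \le \kappa_{\rm opt}\,\eta_\ell(u_\ell)^2$, where the threshold $\kappa_{\rm opt} \in (0,1)$ will be fixed only at the end. The aim is to bound $\eta_\ell(u_\ell)^2$, up to a small absorbable multiple of itself, by $\eta_\ell(\RR, u_\ell)^2$; then the D\"orfler property $\theta\,\eta_\ell(u_\ell)^2 \le \eta_\ell(\RR, u_\ell)^2$ falls out for suitable $\theta$.

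Since $\TT_\ell = (\TT_\ell \cap \TT_\fine) \cup \RR$ is a decomposition into disjoint sets, we have $\eta_\ell(u_\ell)^2 = \eta_\ell(\TT_\ell \cap \TT_\fine, u_\ell)^2 + \eta_\ell(\RR, u_\ell)^2$. On the unrefined elements, stability~\eqref{assumption:stab} (applied with $\UU_\coarse := \TT_\ell \cap \TT_\fine$, which is admissible), monotonicity of the estimator in its index set ($\TT_\ell \cap \TT_\fine \subseteq \TT_\fine$), and the hypothesis give
\[
\eta_\ell(\TT_\ell \cap \TT_\fine, u_\ell) \le \eta_\fine(\TT_\ell \cap \TT_\fine, u_\fine) + \Cstab\,\enorm{u_\fine - u_\ell} \le \kappa_{\rm opt}^{1/2}\,\eta_\ell(u_\ell) + \Cstab\,\enorm{u_\fine - u_\ell}.
\]
The crucial point is that the displacement $\enorm{u_\fine - u_\ell}$ is controlled by \emph{discrete} reliability~\eqref{assumption:drel}, i.e.\ $\enorm{u_\fine - u_\ell} \le \Cdrel\,\eta_\ell(\RR, u_\ell)$, so the whole right-hand side is expressed through $\eta_\ell(u_\ell)$ and exactly the quantity $\eta_\ell(\RR, u_\ell)$ that we want to see.

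Squaring and using Young's inequality $(a+b)^2 \le (1+\delta)a^2 + (1+\delta^{-1})b^2$ with a free parameter $\delta > 0$ yields
\[
\eta_\ell(u_\ell)^2 \le (1+\delta)\,\kappa_{\rm opt}\,\eta_\ell(u_\ell)^2 + \big[\,1 + (1+\delta^{-1})\,\Cstab^2\Cdrel^2\,\big]\,\eta_\ell(\RR, u_\ell)^2.
\]
Provided $(1+\delta)\kappa_{\rm opt} < 1$, the first term is absorbed on the left, which shows that $\theta\,\eta_\ell(u_\ell)^2 \le \eta_\ell(\RR, u_\ell)^2$ holds for every $\theta \le \big[1 - (1+\delta)\kappa_{\rm opt}\big]\big/\big[1 + (1+\delta^{-1})\Cstab^2\Cdrel^2\big]$. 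Letting $\delta \to \infty$ and $\kappa_{\rm opt} \to 0$, this bound tends to $(1+\Cstab^2\Cdrel^2)^{-1} = \theta_{\rm opt}$; hence any prescribed $\theta < \theta_{\rm opt}$ is admissible, by first choosing $\delta$ large enough that $[1+(1+\delta^{-1})\Cstab^2\Cdrel^2]^{-1} > \theta$ and then $\kappa_{\rm opt} < (1+\delta)^{-1}$ small enough that the displayed threshold stays above $\theta$ — this $\kappa_{\rm opt}$ (depending only on $\theta$, $\Cstab$, $\Cdrel$, not on the meshes) is the one claimed. For the second implication one repeats the four steps verbatim with $\eta_\ell(u_\ell)^2$ replaced by $\eta_\ell(u_\ell)^2 + \zeta_\ell(z_\ell[u_\ell])^2$, invoking the combined stability and combined discrete reliability from Proposition~\ref{prop:axioms-product-estimator} with $v_\coarse = u_\ell$, $w_\coarse = z_\ell[u_\ell]$, $v_\fine = u_\fine$, $w_\fine = z_\fine[u_\fine]$; the displacement term becomes $\enorm{u_\fine-u_\ell} + \enorm{z_\fine[u_\fine]-z_\ell[u_\ell]}$, which combined discrete reliability again bounds by $\Cdrel\,[\eta_\ell(\RR,u_\ell)^2 + \zeta_\ell(\RR,z_\ell[u_\ell])^2]^{1/2}$, so exactly the same threshold $\theta_{\rm opt}$ results. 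The only delicate point is this bookkeeping of constants: the lazy bound $(a+b)^2 \le 2a^2 + 2b^2$ would only reach $\theta_{\rm opt} = (2\Cstab^2\Cdrel^2)^{-1}$, so the free Young parameter is what makes the sharp threshold attainable.
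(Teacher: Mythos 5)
Your proof is correct and is essentially the argument behind the cited result: the paper does not prove this lemma itself but invokes \cite[Proposition~4.12]{axioms}, whose proof is exactly your splitting $\eta_\ell(u_\ell)^2=\eta_\ell(\TT_\ell\cap\TT_\fine,u_\ell)^2+\eta_\ell(\TT_\ell\setminus\TT_\fine,u_\ell)^2$ combined with stability~\eqref{assumption:stab}, discrete reliability~\eqref{assumption:drel}, and a Young inequality with free parameter $\delta$ to reach the sharp threshold $\theta_{\rm opt}=(1+\Cstab^2\Cdrel^2)^{-1}$. Your treatment of the combined estimator via Proposition~\ref{prop:axioms-product-estimator} likewise matches how the paper applies the abstract result, so there is nothing to add.
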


\subsection{Quasi-orthogonality}
\label{section:orthogonality}

To prove linear convergence in the spirit of~\cite{ckns2008}, we imitate the approach from~\cite{bhp2017}. One crucial ingredient are appropriate quasi-orthogonalities. To this end, our proofs exploit the observation of~\cite{ffp2014} that, for any compact operator $\CC : H^1_0(\Omega) \to H^{-1}(\Omega)$, convergence $\norm{u - u_\ell}{H^1(\Omega)} \to 0$ plus Galerkin orthogonality for the nested discrete spaces $\XX_\ell \subseteq \XX_{\ell+1} \subset H^1_0(\Omega)$ for all $\ell \in \N_0$ even yields that $\norm{\CC(u - u_\ell)}{H^{-1}(\Omega)}/\norm{u-u_\ell}{H^1(\Omega)} \to 0$ as $\ell \to \infty$. The latter is also the key argument for the following two lemmas.

\begin{lemma}[quasi-orthogonality for primal problem~{\cite[Lemma~18]{bhp2017}}]
\label{lemma:orth:primal}
Suppose that $\enorm{u - u_\ell} \to 0$ as $\ell \to \infty$. Then,
for all $0 < \eps < 1$, there exists $\ell_0 \in \N$ such that,
$\text{for all } \ell \ge \ell_0 \text{ and all } n \in \N_0,$
\begin{align}\label{eq:orth:primal}
 \enorm{u - u_{\ell+n}}^2 + \enorm{u_{\ell+n} - u_\ell}^2 
 \le \frac{1}{1-\eps} \, \enorm{u - u_\ell}^2. 
 \qquad \qed
\end{align}%
\end{lemma}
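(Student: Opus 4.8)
The plan is to establish the quasi-orthogonality \eqref{eq:orth:primal} by exploiting the following classical identity for Galerkin approximations of $a(\cdot,\cdot)$. Since $u_\ell, u_{\ell+n} \in \XX_{\ell+n}$, Galerkin orthogonality gives $a(u - u_{\ell+n}, v_{\ell+n}) = 0$ for all $v_{\ell+n} \in \XX_{\ell+n}$, and in particular for $v_{\ell+n} = u_{\ell+n} - u_\ell$. If $a(\cdot,\cdot)$ were symmetric, this would yield the Pythagorean identity $\enorm{u - u_\ell}_a^2 = \enorm{u - u_{\ell+n}}_a^2 + \enorm{u_{\ell+n} - u_\ell}_a^2$ with respect to the energy norm of $a(\cdot,\cdot)$, and \eqref{eq:orth:primal} would be immediate with $\eps = 0$. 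In the general (nonsymmetric, or merely G\r{a}rding) case, the non-symmetric and lower-order parts of $a(\cdot,\cdot)$ can be collected into a compact perturbation $\CC : H^1_0(\Omega) \to H^{-1}(\Omega)$, so that the defect in the Pythagorean identity is controlled by a term of the form $\langle \CC(u - u_\ell), u_{\ell+n} - u_\ell \rangle$ plus similar terms.

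The key steps, in order, are: (1) write down the exact algebraic identity relating $\enorm{u - u_\ell}^2$, $\enorm{u - u_{\ell+n}}^2$, and $\enorm{u_{\ell+n} - u_\ell}^2$ using the Galerkin orthogonality $a(u - u_{\ell+n}, u_{\ell+n} - u_\ell) = 0$ and the splitting of $a(\cdot,\cdot)$ into its $\enorm{\cdot}$-inducing symmetric principal part plus a remainder governed by a compact operator; (2) bound the remainder term by $\norm{\CC(u - u_\ell)}{H^{-1}(\Omega)} \, \norm{u_{\ell+n} - u_\ell}{H^1(\Omega)}$ and similar products, and use norm equivalence between $\enorm{\cdot}$ and $\norm{\cdot}{H^1(\Omega)}$; (3) invoke the observation of~\cite{ffp2014} (recalled just before the lemma): from $\enorm{u - u_\ell} \to 0$, hence $\norm{u - u_\ell}{H^1(\Omega)} \to 0$, together with Galerkin orthogonality for the nested spaces $\XX_\ell \subseteq \XX_{\ell+1}$, one gets $\norm{\CC(u - u_\ell)}{H^{-1}(\Omega)} / \norm{u - u_\ell}{H^1(\Omega)} \to 0$ as $\ell \to \infty$; (4) fix $\eps \in (0,1)$, choose $\ell_0$ so large that this ratio is smaller than a suitable multiple of $\eps$ for all $\ell \ge \ell_0$, and absorb the remainder into the left-hand side via Young's inequality, arriving at $\enorm{u - u_{\ell+n}}^2 + \enorm{u_{\ell+n} - u_\ell}^2 \le (1-\eps)^{-1} \enorm{u - u_\ell}^2$.

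The main obstacle I anticipate is step~(3): making precise and rigorous the claim that the relative size of the compact perturbation applied to the error tends to zero. The argument runs by contradiction — if not, one extracts a subsequence along which $(u - u_{\ell_k}) / \norm{u - u_{\ell_k}}{H^1(\Omega)}$ converges weakly in $H^1_0(\Omega)$, hence strongly in the image under the compact operator $\CC$; but Galerkin orthogonality forces the weak limit to be orthogonal (in the $a$-sense) to all of $\bigcup_\ell \XX_\ell$, which is dense, so the weak limit is zero, contradicting that $\norm{\CC(\cdot)}{H^{-1}(\Omega)}$ stays bounded away from zero along the subsequence. This is exactly the content of~\cite[Lemma~18]{bhp2017} (and the underlying observation of~\cite{ffp2014}), so in the write-up I would cite it directly rather than reprove it; the remaining steps (1), (2), (4) are then routine manipulations with the energy norm, norm equivalence, and Young's inequality. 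Note also that the estimate is required uniformly in $n \in \N_0$, which is automatic here since the algebraic identity and the bound on the remainder hold for every $n$ with the same constants, and $\ell_0$ depends only on $\eps$ (through the decay rate of the relative compact-perturbation term), not on $n$.
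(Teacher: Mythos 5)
Your proposal is correct and follows essentially the same route as the paper, which does not reprove this statement but takes it directly from~\cite[Lemma~18]{bhp2017}, with the key ingredient being exactly the observation of~\cite{ffp2014} that $\norm{\CC(u-u_\ell)}{H^{-1}(\Omega)}/\norm{u-u_\ell}{H^1(\Omega)}\to 0$ for the compact perturbation $\CC$ collecting the non-principal part of $a(\cdot,\cdot)$, followed by the Pythagoras-type identity and absorption via Young's inequality. Only a cosmetic remark: the Galerkin orthogonality must be used at the finer level, so the remainder naturally involves $\CC(u-u_{\ell+n})$ rather than $\CC(u-u_\ell)$, which your "similar terms" hedge covers and which causes no harm since the ratio convergence holds for all indices $\ge\ell_0$, uniformly in $n$.
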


The same result holds for the dual problem, if the algorithm ensures convergence $\enorm{z[u] - z_\ell[u]} \to 0$ as $\ell \to \infty$.

\begin{lemma}[quasi-orthogonality for \textit{exact} dual problem~{\cite[Lemma~18]{bhp2017}}]
\label{lemma:orth:dual:exact}
Suppose that $\enorm{z[u] - z_\ell[u]} \to 0$ as $\ell \to \infty$. Then,
for all $0 < \eps < 1$, there exists $\ell_0 \in \N$ such that,
$\text{for all } \ell \ge \ell_0 \text{ and all } n \in \N_0,$
\begin{align}\label{eq:orth:dual:exact}
 \enorm{z[u] - z_{\ell+n}[u]}^2 + \enorm{z_{\ell+n}[u] - z_\ell[u]}^2 
 \le \frac{1}{1-\eps} \, \enorm{z[u] - z_\ell[u]}^2.
\qquad \qed
\end{align}%
\end{lemma}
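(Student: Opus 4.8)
The statement to prove is Lemma~\ref{lemma:orth:dual:exact}, the quasi-orthogonality for the exact dual problem. Since the lemma is explicitly cited as \cite[Lemma~18]{bhp2017} and the excerpt already states the analogous Lemma~\ref{lemma:orth:primal} for the primal problem, the plan is to reduce the dual case to the primal case by observing that the dual problem has exactly the same structure.

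The plan is as follows. First I would note that the exact dual solution $z[u] \in H^1_0(\Omega)$ and the Galerkin approximations $z_\ell[u] \in \XX_\ell$ solve, respectively, $a(v, z[u]) = b(v,u) + b(u,v)$ for all $v \in H^1_0(\Omega)$ and $a(v_\ell, z_\ell[u]) = b(v_\ell, u) + b(u, v_\ell)$ for all $v_\ell \in \XX_\ell$. Because $u$ is fixed here (it is the \emph{exact} primal solution, not $u_\ell$), the right-hand side $v \mapsto b(v,u) + b(u,v)$ is a \emph{fixed} bounded linear functional on $H^1_0(\Omega)$, and by~\eqref{eq:dual:right-hand-side} it has the same form as the primal right-hand side $F$. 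Thus $z[u]$ together with the nested Galerkin solutions $z_\ell[u]$ is precisely an instance of the abstract setting treated in Lemma~\ref{lemma:orth:primal}: we are approximating a fixed solution of a well-posed linear problem (whose bilinear form is $a(\cdot,\cdot)$ up to transposing the arguments, which does not affect the Lax--Milgram/compactness structure) by a conforming, nested sequence of Galerkin spaces.

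The second step is to verify that the hypothesis of the abstract lemma — namely $H^1$-convergence of the Galerkin approximations — is in force: this is exactly the assumption $\enorm{z[u] - z_\ell[u]} \to 0$ that we are given. Then, since the key mechanism behind Lemma~\ref{lemma:orth:primal} (as recalled in the text preceding it) is the observation from~\cite{ffp2014} that for a compact operator $\CC$, $H^1$-convergence plus Galerkin orthogonality on nested spaces forces $\norm{\CC(z[u]-z_\ell[u])}{H^{-1}}/\norm{z[u]-z_\ell[u]}{H^1} \to 0$, and since the compact-perturbation part of the bilinear form $a(\cdot,\cdot)$ (the term $cu + \b\cdot\nabla u$, or rather its transpose) is captured by such a compact $\CC$, I would simply invoke Lemma~\ref{lemma:orth:primal} verbatim with $u$ replaced by $z[u]$, $u_\ell$ replaced by $z_\ell[u]$, and $a(\cdot,\cdot)$ replaced by its transpose $a^\top(v,w) := a(w,v)$. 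This yields~\eqref{eq:orth:dual:exact} for all $0 < \eps < 1$ with some $\ell_0$ depending on $\eps$, the bilinear form, and $\KK$.

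The main (and really only) obstacle is bookkeeping rather than mathematics: one must make sure that the abstract lemma from~\cite{bhp2017} is stated generally enough to accommodate a transposed bilinear form and an arbitrary fixed right-hand side, which it is — its proof only uses continuity of $a(\cdot,\cdot)$, ellipticity (or a Gårding inequality) of its principal part, the Galerkin orthogonality $a(v_\ell, z[u] - z_\ell[u]) = 0$ for $v_\ell \in \XX_\ell$ (valid here because the dual forms in~\eqref{eq:discrete:formulation} are the restrictions of~\eqref{eq:dual:formulation}), nestedness of the spaces, and the compactness of the lower-order part of $a(\cdot,\cdot)$. Since all of these are available, the proof is a one-line reduction; accordingly I would write it as: ``This follows by applying Lemma~\ref{lemma:orth:primal} to the (transposed) bilinear form $a(\cdot,\cdot)$ and the fixed right-hand side~\eqref{eq:dual:right-hand-side}, noting that $z[u]$ and the nested Galerkin approximations $z_\ell[u] \in \XX_\ell$ satisfy the corresponding Galerkin orthogonality.''
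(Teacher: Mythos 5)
Your reduction is exactly what the paper does: Lemma~\ref{lemma:orth:dual:exact} is stated with no separate proof and is simply cited as an instance of~\cite[Lemma~18]{bhp2017}, the dual problem being just another linear problem of the same (transposed, Gårding-type) structure with fixed right-hand side, so the primal quasi-orthogonality argument applies verbatim under the assumed convergence $\enorm{z[u]-z_\ell[u]}\to 0$. Your proposal is correct and takes essentially the same route.
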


\begin{lemma}[combined quasi-orthogonality for \textit{inexact} dual problem]\label{lemma:orth:dual}
Suppose that $\enorm{u - u_\ell} + \enorm{z[u] - z_\ell[u_\ell]} \to 0$ as $\ell \to \infty$.
Then, for all $0 < \delta < 1$, there exists $\ell_0 \in \N$ such that,
$\text{for all } \ell \ge \ell_0 \text{ and all } n \in \N_0,$
\begin{align}
\begin{split}
 &\big[\, \enorm{u-u_{\ell+n}}^2 + \enorm{z[u] - z_{\ell+n}[u_{\ell+n}]}^2 \, \big]
 + \big[ \, \enorm{u_{\ell+n} - u_\ell} + \enorm{z_{\ell+n}[u_{\ell+n}] - z_\ell[u_\ell]}^2 \, \big]
\\& \qquad
 \le \frac{1}{1-\delta} \, \big[ \, \enorm{u - u_\ell}^2 + \enorm{z[u] - z_\ell[u_\ell]}^2 \, \big].
\end{split}
\end{align}
\end{lemma}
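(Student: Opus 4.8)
The plan is to combine the two known quasi-orthogonalities — for the primal problem (Lemma~\ref{lemma:orth:primal}) and for the exact dual problem (Lemma~\ref{lemma:orth:dual:exact}) — and to control the gap between the exact dual solution $z[u]$ and the inexact dual solution $z_\ell[u_\ell]$, which is $z_\ell[u] - z_\ell[u_\ell] = z_\ell[u-u_\ell]$ by linearity. First I would observe that the hypothesis $\enorm{u-u_\ell}+\enorm{z[u]-z_\ell[u_\ell]}\to 0$ forces both $\enorm{u-u_\ell}\to 0$ and, via $\enorm{z[u]-z_\ell[u]} \le \enorm{z[u]-z_\ell[u_\ell]} + \enorm{z_\ell[u_\ell]-z_\ell[u]}$ together with Lemma~\ref{lemma:stab:z} (which gives $\enorm{z_\ell[u_\ell-u]}\lesssim\enorm{u-u_\ell}$), also $\enorm{z[u]-z_\ell[u]}\to 0$. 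Hence both Lemma~\ref{lemma:orth:primal} and Lemma~\ref{lemma:orth:dual:exact} are applicable.

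Next, I would fix $0<\eps<1$ to be chosen at the end, apply Lemma~\ref{lemma:orth:primal} and Lemma~\ref{lemma:orth:dual:exact} with this $\eps$, obtaining $\ell_0$ valid for both, and add the two estimates:
\begin{align*}
 &\enorm{u-u_{\ell+n}}^2 + \enorm{u_{\ell+n}-u_\ell}^2 + \enorm{z[u]-z_{\ell+n}[u]}^2 + \enorm{z_{\ell+n}[u]-z_\ell[u]}^2 \\
 &\qquad \le \frac{1}{1-\eps}\,\big[\,\enorm{u-u_\ell}^2 + \enorm{z[u]-z_\ell[u]}^2\,\big].
\end{align*}
The right-hand side still involves $z[u]-z_\ell[u]$ rather than $z[u]-z_\ell[u_\ell]$, and the left-hand side involves $z_{\ell+n}[u]$ rather than $z_{\ell+n}[u_{\ell+n}]$; the remaining work is to trade these for the desired quantities. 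For both substitutions I would use the triangle inequality together with Young's inequality $|a+b|^2 \le (1+\gamma)a^2 + (1+\gamma^{-1})b^2$ and the bound $\enorm{z_k[u]-z_k[u_k]} = \enorm{z_k[u-u_k]} \le C_1 C_2 \,\enorm{u-u_k}$ from Lemma~\ref{lemma:stab:z}. Concretely, on the right one replaces $\enorm{z[u]-z_\ell[u]}^2$ by $(1+\gamma)\enorm{z[u]-z_\ell[u_\ell]}^2 + (1+\gamma^{-1})(C_1C_2)^2\enorm{u-u_\ell}^2$, and the extra primal term is absorbed because the left-hand side already dominates a multiple of $\enorm{u-u_\ell}^2$ (indeed $\enorm{u-u_{\ell+n}}^2 + \enorm{u_{\ell+n}-u_\ell}^2 \ge \tfrac12\enorm{u-u_\ell}^2$); similarly, on the left one replaces the two "$[u]$"-dual terms by their "$[u_{\ell+n}]$"-counterparts minus the $(C_1C_2)^2\enorm{u-u_{\ell+n}}^2$ and $\lesssim(\enorm{u-u_{\ell+n}}^2+\enorm{u_{\ell+n}-u_\ell}^2)$ error terms, which are again absorbed into the primal surplus on the left.

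The main obstacle — and the step requiring care — is the bookkeeping of these absorptions: one must choose the Young parameters $\gamma$ and the slack $\eps$ in Lemmas~\ref{lemma:orth:primal}–\ref{lemma:orth:dual:exact} small enough (in terms of the fixed target $\delta$ and the constant $C_1C_2$ from Lemma~\ref{lemma:stab:z}) so that, after moving all the $\enorm{u-u_{\ell+n}}^2$, $\enorm{u_{\ell+n}-u_\ell}^2$ correction terms to the left and all the surplus $\enorm{u-u_\ell}^2$ terms to the right, the prefactor on the right is still $\le (1-\delta)^{-1}$ and the prefactors on the left of the four genuine terms $\enorm{u-u_{\ell+n}}^2$, $\enorm{u_{\ell+n}-u_\ell}^2$, $\enorm{z[u]-z_{\ell+n}[u_{\ell+n}]}^2$, $\enorm{z_{\ell+n}[u_{\ell+n}]-z_\ell[u_\ell]}^2$ remain $\ge 1$. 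Since all constants are uniform in $\ell$ and $n$ and the slack $\eps$ can be taken arbitrarily small at the cost of enlarging $\ell_0$, this is always achievable; the inequalities are otherwise routine. This yields the claimed estimate with the stated dependence of $\ell_0$ on $\delta$ (and implicitly on $a(\cdot,\cdot)$ and the boundedness of $\KK$ through $C_1,C_2$).
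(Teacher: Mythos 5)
There is a genuine gap in your absorption step, and it is precisely the point where the paper's proof needs compactness of $\KK$. You control the linearization error only through the uniform stability bound of Lemma~\ref{lemma:stab:z}, $\enorm{z_k[u]-z_k[u_k]} \le C_1C_2\,\enorm{u-u_k}$, with a \emph{fixed} constant. After your Young inequality on the right-hand side, the coefficient of $\enorm{u-u_\ell}^2$ becomes $\tfrac{1}{1-\eps}\bigl(1+(1+\gamma^{-1})(C_1C_2)^2\bigr)$, and this cannot be brought down to $\tfrac{1}{1-\delta}$ for small $\delta$: shrinking $\gamma$ makes it worse, shrinking $\eps$ does not touch the $(C_1C_2)^2$ part, and your suggested remedy --- that the left-hand side dominates $\tfrac12\enorm{u-u_\ell}^2$ --- is logically backwards, since a lower bound on the left of an inequality you are trying to prove cannot be used to reduce a coefficient on its right. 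The same problem occurs on the left: trading $\enorm{z[u]-z_{\ell+n}[u]}^2$ and $\enorm{z_{\ell+n}[u]-z_\ell[u]}^2$ for their $[u_{\ell+n}]$-, $[u_\ell]$-counterparts costs terms of size $(1+\gamma^{-1})(C_1C_2)^2\enorm{u-u_{\ell+n}}^2$, while the ``primal surplus'' available from Lemma~\ref{lemma:orth:primal} has coefficient exactly $1$ (and is itself needed in the target), so there is nothing to pay with unless the correction factor is below $1$ and, in fact, arbitrarily small.

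The missing idea is the paper's key estimate $\enorm{z_\ell[u]-z_\ell[u_\ell]}^2 \le \kappa_\ell\,\enorm{u-u_\ell}^2$ with $\kappa_\ell \to 0$: writing $z_\ell[u]-z_\ell[u_\ell]=z_\ell[u-u_\ell]$ and testing the discrete dual problem with itself gives $\enorm{z_\ell[u-u_\ell]}^2 \lesssim \bigl[\norm{\KK(u-u_\ell)}{H^{-1}(\Omega)}+\norm{\KK'(u-u_\ell)}{H^{-1}(\Omega)}\bigr]\enorm{u-u_\ell}$, and compactness of $\KK$ and of its adjoint $\KK'$ (Schauder) together with $\enorm{u-u_\ell}\to0$, nestedness, and Galerkin orthogonality yields, via \cite[Lemma~3.5]{ffp2014} (see also \cite[Lemma~17]{bhp2017}), that the bracket is $o(\enorm{u-u_\ell})$. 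With this vanishing factor, all your correction terms carry $\kappa_\ell$ or $\kappa_{\ell+n}$, and the bookkeeping you describe does close: the right-hand inflation is $O(\kappa_\ell)$ and the left-hand loss $\lesssim\kappa_{\ell+n}\enorm{u-u_{\ell+n}}^2$ can be absorbed after enlarging $\ell_0$; this is exactly how the paper chooses $\alpha$, then $\eps,\beta$, then $\ell_0$. Without compactness (boundedness of $\KK$ alone), the statement with constant $\tfrac{1}{1-\delta}$ for every $\delta$ is not reachable by this route, which is consistent with the paper proving only plain convergence in that case.
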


\begin{proof}
According to Lemma~\ref{lemma:stab:z}, it holds that
\begin{align*}
 \enorm{z[u] - z_\ell[u]}
 \le \enorm{z[u] - z_\ell[u_\ell]} + \enorm{z_\ell[u] - z_\ell[u_\ell]}
 \reff{eq:stab:z}\lesssim \enorm{z[u] - z_\ell[u_\ell]} + \enorm{u - u_\ell}
 \xrightarrow{\ell \to \infty} 0.
\end{align*}
Hence, we may exploit the conclusions of Lemma~\ref{lemma:orth:primal} and Lemma~\ref{lemma:orth:dual:exact}.
For arbitrary $\alpha > 0$, the Young inequality guarantees that 
\begin{align*}
 \enorm{z[u] - z_{\ell+n}[u_{\ell+n}]}^2
 &\le (1 + \alpha) \, \enorm{z[u] - z_{\ell+n}[u]}^2
 + (1 + \alpha^{-1}) \, \enorm{z_{\ell+n}[u] - z_{\ell+n}[u_{\ell+n}]}^2,
 \\
 \enorm{z_{\ell+n}[u_{\ell+n}] - z_\ell[u_\ell]}^2
 &\le (1+\alpha) \, \enorm{z_{\ell+n}[u] - z_{\ell}[u]}^2
 + (1+\alpha^{-1})^2 \, \enorm{z_{\ell}[u] - z_{\ell}[u_\ell]}^2 
 \\ &\qquad
 + (1+\alpha)(1+\alpha^{-1}) \, \enorm{z_{\ell+n}[u] - z_{\ell+n}[u_{\ell+n}]}^2,
 \\
 \enorm{z[u] - z_{\ell}[u]}^2
 &\le (1 + \alpha) \, \enorm{z[u] - z_{\ell}[u_{\ell}]}^2
 + (1 + \alpha^{-1}) \, \enorm{z_{\ell}[u] - z_{\ell}[u_{\ell}]}^2.
\end{align*}
Together with Lemma~\ref{lemma:orth:dual:exact}, this leads to
\begin{align}\label{eq1:orth:dual}
\begin{split}
 &\enorm{z[u] - z_{\ell+n}[u_{\ell+n}]}^2
 + \enorm{z_{\ell+n}[u_{\ell+n}] - z_\ell[u_\ell]}^2
 \\ & \quad
 \refp{eq:orth:dual:exact}\le (1+\alpha) \, \big[ \, \enorm{z[u] - z_{\ell+n}[u]}^2 + \enorm{z_{\ell+n}[u] - z_{\ell}[u]}^2 \, \big]
 \\& \qquad
 + (2+\alpha)(1+\alpha^{-1}) \, \enorm{z_{\ell+n}[u] - z_{\ell+n}[u_{\ell+n}]}^2
 + (1+\alpha^{-1})^2 \, \enorm{z_{\ell}[u] - z_{\ell}[u_\ell]}^2
 \\ & \quad
 \reff{eq:orth:dual:exact}\le \frac{1+\alpha}{1-\eps} \, \enorm{z[u] - z_\ell[u]}^2
 + (1+\alpha^{-1})^2 \, \enorm{z_{\ell}[u] - z_{\ell}[u_\ell]}^2
 \\& \qquad
 + (2+\alpha)(1+\alpha^{-1}) \, \enorm{z_{\ell+n}[u] - z_{\ell+n}[u_{\ell+n}]}^2
 \\ & \quad
 \refp{eq:orth:dual:exact}\le \frac{(1+\alpha)^2}{1-\eps} \, \enorm{z[u] - z_{\ell}[u_{\ell}]}^2
 + \Big[ (1+\alpha^{-1})^2 + \frac{(1+\alpha^{-1})(1+\alpha)}{1-\eps} \Big] \, \enorm{z_{\ell}[u] - z_{\ell}[u_\ell]}^2
 \hspace*{-5mm}
 \\& \qquad
 + (2+\alpha)(1+\alpha^{-1}) \, \enorm{z_{\ell+n}[u] - z_{\ell+n}[u_{\ell+n}]}^2
\end{split}
\end{align}
for all $0 < \eps < 1$ and all $\ell \ge \ell_0$, where $\ell_0 \in \N_0$ depends only on $\eps$.
With the (compact) adjoint $\KK' : H^1_0(\Omega) \to H^{-1}(\Omega)$ of $\KK$, we note that
\begin{align*}
 &\enorm{z_{\ell}[u] - z_{\ell}[u_\ell]}^2
 = \enorm{z_{\ell}[u - u_\ell]}^2
 \lesssim a(z_{\ell}[u - u_\ell], z_{\ell}[u - u_\ell])
 \\& \quad
 \refp{eq:stab:z}= b(z_{\ell}[u - u_\ell], u - u_\ell) + b(u - u_\ell, z_{\ell}[u - u_\ell])
 \\& \quad
 \refp{eq:stab:z}= \dual{\KK(z_{\ell}[u - u_\ell])}{u - u_\ell}_{H^{-1}\times H^1_0}
 + \dual{\KK(u - u_\ell)}{z_{\ell}[u - u_\ell]}_{H^{-1}\times H^1_0}
 \\& \quad
 \refp{eq:stab:z}= \dual{\KK'(u - u_\ell)}{z_{\ell}[u - u_\ell]}_{H^{-1}\times H^1_0}
 + \dual{\KK(u - u_\ell)}{z_{\ell}[u - u_\ell]}_{H^{-1}\times H^1_0}.
 \\& \quad
 \reff{eq:stab:z}\lesssim \big[ \, \norm{\KK'(u - u_\ell)}{H^{-1}(\Omega)} + \norm{\KK(u - u_\ell)}{H^{-1}(\Omega)} \, \big]
 \, \enorm{u - u_\ell}.
\end{align*}
Since $\KK$ and $\KK'$ are compact operators (according to the Schauder theorem), it follows from~\cite[Lemma~3.5]{ffp2014} (see also~\cite[Lemma~17]{bhp2017}) that
\begin{align*}
 \big[ \, \norm{\KK'(u - u_\ell)}{H^{-1}(\Omega)} + \norm{\KK(u - u_\ell)}{H^{-1}(\Omega)} \, \big] 
 \le \widetilde{\kappa}_\ell \, \enorm{u - u_\ell}
 \quad \text{with} \quad
 0 \le \widetilde{\kappa}_\ell \xrightarrow{\ell \to \infty} 0.
\end{align*}
Combining the two last estimates, we see that
\begin{align}\label{eq2:orth:dual}
 \enorm{z_{\ell}[u] - z_{\ell}[u_\ell]}^2 \le \kappa_\ell \, \enorm{u - u_\ell}^2
 \quad \text{for all } \ell \in \N_0, \text{ where }
 0 \le \kappa_\ell \xrightarrow{\ell \to \infty} 0.
\end{align}
Plugging~\eqref{eq2:orth:dual} into~\eqref{eq1:orth:dual}, we thus have shown that
\begin{align*}
 &\enorm{z[u] - z_{\ell+n}[u_{\ell+n}]}^2
 + \enorm{z_{\ell+n}[u_{\ell+n}] - z_\ell[u_\ell]}^2
 \\ & \quad
 \le \frac{(1+\alpha)^2}{1-\eps} \, \enorm{z[u] - z_{\ell}[u_{\ell}]}^2
 + \Big[ (1+\alpha^{-1})^2 + \frac{(1+\alpha^{-1})(1+\alpha)}{1-\eps} \Big] \, \kappa_\ell \, \enorm{u - u_\ell}^2
 \\ & \qquad
 + (2+\alpha)(1+\alpha^{-1}) \, \kappa_{\ell+n} \, \enorm{u - u_{\ell+n}}^2
\end{align*}
for all $0 < \eps < 1$, all $\alpha > 0$, and all $\ell \ge \ell_0$, where $\ell_0 \in \N_0$ depends only on $\eps$.
We combine this estimate with that of Lemma~\ref{lemma:orth:primal}. This leads to 
\begin{align*}
 & \big[\, \enorm{u-u_{\ell+n}}^2 + \enorm{z[u] - z_{\ell+n}[u_{\ell+n}]}^2 \, \big]
 + \big[ \, \enorm{u_{\ell+n} - u_\ell} + \enorm{z_{\ell+n}[u_{\ell+n}] - z_\ell[u_\ell]}^2 \, \big]
 \\& \quad
 \le  C(\alpha,\eps,\ell) \, \big[ \, \enorm{u-u_\ell}^2 + \enorm{z[u] - z_{\ell}[u_{\ell}]}^2 \, \big] 
 + (2+\alpha)(1+\alpha^{-1}) \, \kappa_{\ell+n} \, \enorm{u - u_{\ell+n}}^2,
\end{align*}
where
\begin{equation*}
	C(\alpha,\eps,\ell)
	:=
	\max\bigg\{\frac{(1+\alpha)^2}{1-\eps} \,,\, \frac{1}{1-\eps} + \Big[ (1+\alpha^{-1})^2 + \frac{(1+\alpha^{-1})(1+\alpha)}{1-\eps} \Big] \, \kappa_\ell \bigg\}
\end{equation*}
for all $0 < \eps < 1$, all $\alpha > 0$, and all $\ell \ge \ell_0$, where $\ell_0 \in \N_0$ depends only on $\eps$.
For arbitrary $0 < \alpha, \beta, \eps < 1$, there exists $\ell_0' \in \N_0$ such that for all $\ell \ge \ell_0'$, it holds that 
\begin{align*}
(2+\alpha)(1+\alpha^{-1}) \kappa_\ell \le \beta
\end{align*}
as well as 
\begin{align*}
\frac{1}{1-\eps} + \Big[ (1+\alpha^{-1})^2 + \frac{(1+\alpha^{-1})(1+\alpha)}{1-\eps} \Big] \, \kappa_\ell
\le \frac{(1+\alpha)^2}{1-\eps}.
\end{align*}
Hence, we are led to
\begin{align}\label{eq3:orth:dual}
\begin{split}
 &\big[\, \enorm{u-u_{\ell+n}}^2 + \enorm{z[u] - z_{\ell+n}[u_{\ell+n}]}^2 \, \big]
 + \big[ \, \enorm{u_{\ell+n} - u_\ell} + \enorm{z_{\ell+n}[u_{\ell+n}] - z_\ell[u_\ell]}^2 \, \big]
 \\& \quad
 \le \frac{(1+\alpha)^2}{(1-\eps)(1-\beta)} \, \big[ \, \enorm{u-u_\ell}^2 + \enorm{z[u] - z_{\ell}[u_{\ell}]}^2 \, \big].
\end{split}
\end{align}
Given $0 < \delta < 1$, we first fix $\alpha > 0$ such that $(1+\alpha)^2 < \frac{1}{1-\delta}$. Then, we choose $0 < \eps, \beta < 1$ such that $\frac{(1+\alpha)^2}{(1-\eps)(1-\beta)} \le \frac{1}{1-\delta}$. The choices of $\eps$ and $\beta$ also provide some index $\ell_0 \in \N_0$ such that estimate~\eqref{eq3:orth:dual} holds for all $\ell \ge \ell_0$. This concludes the proof.
\end{proof}


\section{Proof of plain convergence of Algorithm~\lowercase{\ref{algorithm}}~and~\lowercase{\ref{algorithm:combined}}}

\subsection{Algorithm A}
First, we prove the upper bound for the goal error.

\begin{proof}[\bfseries Proof of Proposition~\ref{prop:convergenceA}\rm\bf(i)]
	It holds that
	\begin{align*}
	\big| G(u) - G(u_\coarse) \big|
	&\reff{eq:ansart:aposteriori}{\lesssim}
	\enorm{u - u_\coarse} \, \big[ \, \enorm{z[u_\coarse] - z_\coarse[u_\coarse]} + \enorm{u - u_\coarse} \, \big] \\
	&\reff{assumption:rel}{\lesssim}
	\eta_\coarse(u_\coarse) \, \big[ \, \zeta_\coarse(z_\coarse[u_\coarse]) + \eta_\coarse(u_\coarse) \, \big].
	\end{align*}
	The hidden constants depend only on the boundedness of $a(\cdot,\cdot)$ and $\KK$ and on the constant $\Crel$ from~\ref{assumption:rel}.
	According to the Young inequality, this concludes the proof.
\end{proof}

Since Algorithm~\ref{algorithm} linearizes the dual problem around the known discrete solution (i.e., it employs $z_\ell[u_\ell]$ instead of the non-computable $z_\ell[u]$), a first important observation is that Algorithm~\ref{algorithm} ensures convergence for the primal solution. In particular, the following proposition allows to apply the quasi-orthogonalities from Section~\ref{section:orthogonality}.

\begin{proposition}[plain convergence of errors and error estimators]
\label{prop:plain_convergence}
Suppose~\eqref{assumption:stab}--\eqref{assumption:rel}.
Then, for any choice of marking parameters $0 < \theta \le 1$ and $\Cmark \ge 1$, Algorithm~\ref{algorithm} and Algorithm~\ref{algorithm:combined} guarantee that
\begin{itemize}
\item $\enorm{u-u_\ell} + \eta_\ell(u_\ell) \to 0$
 if $\#\set{k \in \N_0}{\MM_k \text{ satisfies~\eqref{eq:doerfler:u}}} = \infty$,
\item $\enorm{u - u_\ell} + \eta_\ell(u_\ell) + \enorm{z[u] - z_\ell[u_\ell]} + \enorm{z[u] - z_\ell[u]} + \zeta_\ell(z_\ell[u_\ell]) \to 0$
 if $\#\set{k \in \N_0}{\MM_k \text{ satisfies~\eqref{eq:doerfler:uz}}} = \infty$,
\end{itemize}
as $\ell \to \infty$. 
Moreover, at least one of these two cases is met.
\end{proposition}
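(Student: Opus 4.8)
The plan is to reduce everything to the abstract plain-convergence machinery of the axioms framework (as in \cite{axioms,mns2000}), applied separately to the primal estimator sequence and to the combined estimator sequence, and then to transfer convergence of $\enorm{z[u]-z_\ell[u_\ell]}$ to the remaining quantities via the stability Lemma~\ref{lemma:stab:z}. First I would observe that in both algorithms the mesh sequence is nested, $\XX_\ell \subseteq \XX_{\ell+1}$, so by C\'ea's lemma and a standard density/best-approximation argument the primal discrete solutions converge, $\enorm{u-u_\ell} \to \enorm{u-u_\infty}$, to the Galerkin solution $u_\infty \in \XX_\infty := \overline{\bigcup_\ell \XX_\ell}$; likewise the combined solutions converge to limits in $\XX_\infty$. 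The point of the two bullet cases is to show that the limit errors actually vanish.

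For the first bullet, suppose \eqref{eq:doerfler:u} is satisfied infinitely often, along a subsequence $(\ell_k)$. On these steps D\"orfler marking holds for $\eta_{\ell_k}(u_{\ell_k})$ with respect to $\MM_{\ell_k}^u \subseteq \MM_{\ell_k}$, and since $\MM_{\ell_k} \subseteq \TT_{\ell_k}\setminus\TT_{\ell_k+1}$, the generalized estimator reduction (Lemma~\ref{lemma:generalized_estimator_reduction}, cf.\ \eqref{eq:generalized_estimator_reduction}) gives $\eta_{\ell_k+1}(u_{\ell_k+1})^2 \le q\,\eta_{\ell_k}(u_{\ell_k})^2 + C\,\enorm{u_{\ell_k+1}-u_{\ell_k}}^2$ with $q<1$; combined with quasi-monotonicity \eqref{eq:quasi-monotonicity} on the intermediate steps and the Cauchy property $\enorm{u_{\ell+1}-u_\ell}\to 0$ (a consequence of convergence of $u_\ell$), a telescoping/contraction argument forces $\eta_\ell(u_\ell) \to 0$; then reliability \eqref{assumption:rel} yields $\enorm{u-u_\ell}\to 0$. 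For the second bullet, the identical argument applied to the combined estimator $[\eta_\ell(u_\ell)^2+\zeta_\ell(z_\ell[u_\ell])^2]^{1/2}$ — using that Proposition~\ref{prop:axioms-product-estimator} verifies \eqref{assumption:stab}--\eqref{assumption:red} for it, that \eqref{eq:doerfler:uz} is D\"orfler marking for exactly this quantity, and that the relevant Cauchy property now reads $\enorm{u_{\ell+1}-u_\ell} + \enorm{z_{\ell+1}[u_{\ell+1}]-z_\ell[u_\ell]} \to 0$ — gives $\eta_\ell(u_\ell)^2+\zeta_\ell(z_\ell[u_\ell])^2 \to 0$. Reliability of the combined estimator (Proposition~\ref{prop:axioms-product-estimator}\eqref{assumption:rel}) then controls $\enorm{u-u_\ell} + \enorm{z[u]-z_\ell[u_\ell]} + \enorm{z[u_\ell]-z_\ell[u_\ell]}$, and Lemma~\ref{lemma:stab:z} upgrades this to $\enorm{z[u]-z_\ell[u]} \le \enorm{z[u]-z_\ell[u_\ell]} + \enorm{z_\ell[u_\ell]-z_\ell[u]} \lesssim \enorm{z[u]-z_\ell[u_\ell]} + \enorm{u-u_\ell} \to 0$, and of course $\zeta_\ell(z_\ell[u_\ell])\to 0$ directly. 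Finally, "at least one of the two cases is met" follows because in every step $\ell$, by step~(iv) of Algorithm~\ref{algorithm} one has $\MM_\ell \supseteq \MM_\ell^u$ with $\#\MM_\ell^u = \min\{\#\overline\MM_\ell^u,\#\overline\MM_\ell^{uz}\}$, so $\MM_\ell$ realizes D\"orfler marking for \eqref{eq:doerfler:u} or for \eqref{eq:doerfler:uz} (whichever required fewer marked elements), hence by pigeonhole one of the two index sets is infinite; for Algorithm~\ref{algorithm:combined} only \eqref{eq:doerfler:uz} is ever used, so the second set is infinite.

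The main obstacle I anticipate is the bookkeeping of the "inexact" dual quantity: unlike the classical setting, $z_\ell[u_\ell]$ changes the right-hand side of its defining problem with $\ell$ (through $u_\ell$), so the sequence $(z_\ell[u_\ell])$ is not a Galerkin sequence for one fixed problem, and one cannot directly invoke the standard a priori convergence of nested Galerkin approximations. The way around this is exactly the structure already built into Proposition~\ref{prop:axioms-product-estimator}: treat the \emph{pair} $(u_\ell, z_\ell[u_\ell])$ and the combined estimator as the object to which the abstract argument is applied, using Lemma~\ref{lemma:stab:z} to bound $\enorm{z_\ell[u]-z_\ell[u_\ell]} \lesssim \enorm{u-u_\ell}$ whenever the difference between the "inexact" and "exact" dual solutions needs to be absorbed — in particular to obtain the Cauchy property $\enorm{z_{\ell+1}[u_{\ell+1}]-z_\ell[u_\ell]}\to 0$ from convergence of the combined solutions in $\XX_\infty$ together with $\enorm{u-u_\ell}\to 0$ (which is itself part of the conclusion and is obtained first within the same contraction argument). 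A minor additional care is that the estimator reduction must be invoked on the marked subsequence while quasi-monotonicity bridges the gaps, exactly as in \cite[Lemma~2.4]{mns2000} or the convergence proof in \cite{axioms}; I would spell this out once for the primal case and then say "the same argument applies" for the combined case.
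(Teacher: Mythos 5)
Your overall strategy (a priori convergence of the discrete solutions to limits in the discrete limit space $\XX_\infty$, estimator reduction along the subsequence of marked steps, reliability to identify the limit, quasi-monotonicity to pass to the full sequence) is the same as the paper's, but two steps do not go through as you describe them. First, your bridging of the unmarked stretches by quasi-monotonicity spoils the contraction: you apply Lemma~\ref{lemma:generalized_estimator_reduction} only from $\TT_{\ell_k}$ to $\TT_{\ell_k+1}$ and then invoke \eqref{eq:quasi-monotonicity} to reach $\TT_{\ell_{k+1}}$, which yields $\eta_{\ell_{k+1}}(u_{\ell_{k+1}})^2 \le \Cmon q \, \eta_{\ell_k}(u_{\ell_k})^2 + o(1)$; since $\Cmon \ge 1$ is not at your disposal, $\Cmon q<1$ may fail and no contraction up to a null sequence follows. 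The paper avoids this: because $\MM_{\ell_k} \subseteq \TT_{\ell_k}\setminus\TT_{\ell_k+1} \subseteq \TT_{\ell_k}\setminus\TT_{\ell_{k+1}}$, the D\"orfler property \eqref{eq:doerfler:u} holds with the same $\theta$ for the set $\TT_{\ell_k}\setminus\TT_{\ell_{k+1}}$, so Lemma~\ref{lemma:generalized_estimator_reduction} applies directly with $\TT_\coarse=\TT_{\ell_k}$ and $\TT_\fine=\TT_{\ell_{k+1}}$ and gives $\eta_{\ell_{k+1}}(u_{\ell_{k+1}})^2 \le q\,\eta_{\ell_k}(u_{\ell_k})^2 + C\,\enorm{u_{\ell_{k+1}}-u_{\ell_k}}^2$ with $q<1$; quasi-monotonicity is used only once, afterwards, to transfer $\eta_{\ell_k}(u_{\ell_k})\to 0$ to the full sequence. (Your bridging could alternatively be repaired with the stability-based estimate of Lemma~\ref{lemma:generalized_estimator_reduction}, whose factor $(1+\delta)$ is adjustable, but not with \eqref{eq:quasi-monotonicity}.)

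Second, your derivation of the Cauchy property $\enorm{z_{\ell+1}[u_{\ell+1}]-z_\ell[u_\ell]}\to 0$ is circular as written: you absorb the linearization error via $\enorm{z_\ell[u]-z_\ell[u_\ell]}\lesssim\enorm{u-u_\ell}$ and assert that $\enorm{u-u_\ell}\to 0$ is ``obtained first within the same contraction argument''; but in the case governed by \eqref{eq:doerfler:uz} the contraction is run for the combined estimator and needs this very Cauchy property as input, while D\"orfler marking for $\eta_\ell$ alone need not occur at all. The paper's remedy is to work with the a priori limits throughout: $z_\ell[u_\infty]$ is a genuine Galerkin sequence for the fixed datum $u_\infty$, hence converges to some $z_\infty$, and Lemma~\ref{lemma:stab:z} gives $\enorm{z_\ell[u_\infty]-z_\ell[u_\ell]}\lesssim\enorm{u_\infty-u_\ell}\to 0$ unconditionally, so $z_\ell[u_\ell]\to z_\infty$ and the Cauchy property is available before any case distinction; the identifications $u_\infty=u$ and $z_\infty=z[u]$ then follow from reliability at the end. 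With these two repairs your argument coincides with the paper's proof; the remaining points (pigeonhole for ``at least one case is met'', reliability of the combined estimator, and the triangle-inequality upgrade to $\enorm{z[u]-z_\ell[u]}$ via Lemma~\ref{lemma:stab:z}) are handled correctly.
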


\begin{proof}
Since the discrete spaces are nested, it follows from the C\'ea lemma that there exists $u_\infty \in H^1_0(\Omega)$ such that
\begin{align}\label{eq:apriori:u}
 \enorm{u_\infty - u_\ell} \xrightarrow{\ell \to \infty} 0;
\end{align}
see, e.g.,~\cite{msv2008,afp2012} or even the early work~\cite{bv1984}. More precisely, $u_\infty$ is the Galerkin approximation of $u$ with respect to the ``discrete limit space'' $\XX_\infty := \overline{\bigcup_{\ell=0}^\infty \XX_\ell}$, where the closure is taken in $H^1_0(\Omega)$. Analogously, there exists $z_\infty \in H^1_0(\Omega)$ such that
\begin{align*}
 \enorm{z_\infty - z_\ell[u_\infty]}  \xrightarrow{\ell \to \infty} 0.
\end{align*}
Together with~\eqref{eq:apriori:u} and Lemma~\ref{lemma:stab:z}, this also proves that
\begin{align*}
 \enorm{z_\infty - z_\ell[u_\ell]}
 &\le \enorm{z_\infty - z_\ell[u_\infty]} + \enorm{z_\ell[u_\infty] - z_\ell[u_\ell]}
 \\&
 \lesssim \enorm{z_\infty - z_\ell[u_\infty]} + \enorm{u_\infty - u_\ell}
 \xrightarrow{\ell \to \infty}0.
\end{align*}

In the following, we aim to show that, in particular, $u = u_\infty$. To this end, the proof considers two cases:
\begin{itemize}
\item[{$\boldsymbol{\langle1\rangle}$}] There exists a subsequence $(\TT_{\ell_k})_{k \in \N_0}$ such that $\MM_{\ell_k}$ satisfies \eqref{eq:doerfler:u} for all $k \in \N_0$,
\item[{$\boldsymbol{\langle2\rangle}$}] There exists a subsequence $(\TT_{\ell_k})_{k \in \N_0}$ such that $\MM_{\ell_k}$ satisfies \eqref{eq:doerfler:uz} for all $k \in \N_0$.
\end{itemize}
Clearly, (at least) one of these subsequences is well-defined (i.e., there are infinitely many steps of the respective marking).

{\bf Case {$\boldsymbol{\langle1\rangle}$}.}
According to Lemma~\ref{lemma:generalized_estimator_reduction}, there exists $0 < q < 1$ and $C > 0$ such that
\begin{align*}
 \eta_{\ell_{k+1}}(u_{\ell_{k+1}})^2 
 \reff{eq:generalized_estimator_reduction}\le q \, \eta_{\ell_k}(u_{\ell_k})^2 + C \, \enorm{u_{\ell_{k+1}} - u_{\ell_k}}^2
 \quad \text{for all } k \in \N_0.
\end{align*}
With~\eqref{eq:apriori:u}, the last estimate proves that the estimator subsequence is contractive up to some zero sequence. Therefore, it follows from basic calculus and reliability~\eqref{assumption:rel} that
\begin{align*}
 \enorm{u - u_{\ell_k}} \reff{assumption:rel}\lesssim \eta_{\ell_{k}}(u_{\ell_{k}}) \xrightarrow{k \to \infty} 0;
\end{align*}
see, e.g.,~\cite[Lemma~2.3]{afp2012}. In particular, this proves that $u = u_\infty$ and hence $\enorm{u - u_\ell} \to 0$ as $\ell \to \infty$. Moreover, according to quasi-monotonicity (Lemma~\ref{lemma:quasi-monotonicity}), the convergence of the subsequence $\eta_{\ell_{k}}(u_{\ell_{k}}) \to 0$ even yields that $\eta_\ell(u_\ell) \to 0$ as $\ell \to \infty$.

{\bf Case {$\boldsymbol{\langle2\rangle}$}.} We repeat the arguments from case~$\langle1\rangle$. Instead of $\eta_\coarse(u_\coarse)^2$, we consider the combined estimator $\eta_\coarse(u_\coarse)^2 + \zeta_\coarse(z_\coarse[u_\coarse])^2$. For all $k \in \N_0$, this leads to
\begin{align*}
 \eta_{\ell_{k+1}}(u_{\ell_{k+1}})^2 + \zeta_{\ell_{k+1}}(z_{\ell_{k+1}}[u_{\ell_{k+1}}])^2
 &\le q \, \big[ \eta_{\ell_k}(u_{\ell_k})^2 + \zeta_{\ell_{k}}(z_{\ell_{k}}[u_{\ell_{k}}])^2 \big] 
 \\& \qquad
 + C \, \big[ \, \enorm{u_{\ell_{k+1}} - u_{\ell_k}}^2 + \enorm{z_{\ell_{k+1}}[u_{\ell_{k+1}}] - z_{\ell_k}[u_{\ell_k}]}^2\, \big].
\end{align*}
As before, basic calculus reveals that
\begin{align*}
 \enorm{u - u_{\ell_k}}^2 + \enorm{z[u] - z_{\ell_k}[u_{\ell_k}]}^2
 \reff{assumption:rel}\lesssim \eta_{\ell_k}(u_{\ell_k})^2 + \zeta_{\ell_k}(z_{\ell_k}[u_{\ell_k}])^2
 \xrightarrow{k \to \infty} 0.
\end{align*}
In this case, we thus see that $u = u_\infty$ and $z[u] = z_\infty$ as well as estimator convergence $\eta_\ell(u_\ell) + \zeta_\ell(z_\ell[u_\ell]) \to 0$ as $\ell \to \infty$ (following now from Lemma~\ref{lemma:quasi-monotonicity}). In any case, this concludes the proof.
\end{proof}

\begin{proof}[\bfseries Proof of Proposition~\ref{prop:convergenceA}\rm\bf(ii)]
Recall from Proposition~\ref{prop:convergenceA}(i) that
\begin{equation*}
 \big| G(u) - G(u_\ell) \big|
 \reff{eq:goal-upper-bound}\lesssim \eta_\ell(u_\ell) \, \big[ \, \eta_\ell(u_\ell)^2 + \zeta_\ell(z_\ell[u_\ell])^2 \, \big]^{1/2}.
\end{equation*}
Suppose $\#\set{k \in \N_0}{\MM_{\ell} ~\text{satisfies}~ \eqref{eq:doerfler:u}} = \infty$.
According to Proposition~\ref{prop:plain_convergence}, it holds that $\eta_\ell(u_\ell) \to 0$ as $\ell \to \infty$.
According to Lemma~\ref{lemma:quasi-monotonicity}, it holds that $\eta_\ell(u_\ell)^2 + \zeta_\ell(z_\ell[u_\ell])^2 \lesssim \eta_0(u_0)^2 + \zeta_0(z_0[u_0])^2 < \infty$.
Thus, the right-hand side of \eqref{eq:plain-convergenceA} vanishes.
In the case $\#\set{k \in \N_0}{\MM_{\ell} ~\text{satisfies}~ \eqref{eq:doerfler:uz}} = \infty$ one can argue analogously.
This concludes the proof.
\end{proof}

\subsection{Algorithm B}
First, we prove the upper bound.

\begin{proof}[\bfseries Proof of Proposition~\ref{prop:convergenceB}\rm\bf(i)]
	From Proposition~\ref{prop:convergenceA}(i) it follows that
	\begin{align*}
	\big| G(u) - G(u_\coarse) \big|
	&\stackrel{\mathclap{\eqref{eq:goal-upper-bound}}}{\leq}
	\Crel^\prime \, \eta_\coarse(u_\coarse) \, \big[ \,
	\eta_\coarse(u_\coarse)^2 + \zeta_\coarse(z_\coarse[u_\coarse])^2
	\, \big]^{1/2} \\
	&\leq
	\Crel^\prime \, \big[ \,
	\eta_\coarse(u_\coarse)^2 + \zeta_\coarse(z_\coarse[u_\coarse])^2
	\, \big].
	\end{align*}
	This proves the claim.
\end{proof}

\begin{proof}[\bfseries Proof of Proposition~\ref{prop:convergenceB}\rm\bf(ii)]
	Recall from Proposition~\ref{prop:convergenceB}(i) that
	\begin{equation*}
		\big| G(u) - G(u_\ell) \big|
		\reff{eq:goal-upper-bound}\lesssim
		\big[ \, \eta_\ell(u_\ell)^2 + \zeta_\ell(z_\ell[u_\ell])^2 \, \big].
	\end{equation*}
	Note that the marking step~\eqref{eq:doerfler:dual} of Algorithm~\ref{algorithm:combined} implies, in particular, that $\#\set{k \in \N_0}{\MM_{\ell} ~\text{satisfies}~ \eqref{eq:doerfler:uz}} = \infty$.
	According to Proposition~\ref{prop:plain_convergence}, it holds that $\eta_\ell(u_\ell) + \zeta_\ell(z_\ell[u_\ell])\to 0$ as $\ell \to \infty$.
	Thus, the right-hand side of \eqref{eq:plain-convergenceB} vanishes.
	This concludes the proof.
\end{proof}


\section{Proof of Theorem~\lowercase{\ref{theorem:main}}}

\subsection{Linear convergence}

Based on estimator reduction (Lemma~\ref{lemma:generalized_estimator_reduction}) and quasi-orthog\-o\-nal\-i\-ty (Lemma~\ref{lemma:orth:primal}, Lemma~\ref{lemma:orth:dual}), we are in the position to address linear convergence.

\begin{proposition}[generalized contraction]\label{prop:linear-convergence}
	Suppose~\eqref{assumption:stab}--\eqref{assumption:rel}.
	Then, there exist constants $\gamma > 0$ and $0 < \qctr < 1$ such that the quasi-errors
	\begin{align}\label{def:Delta}
	\Delta_\coarse^u := \enorm{u - u_\coarse}^2 + \gamma \, \eta_\coarse(u_\coarse)^2
	\quad \text{and} \quad
	\Delta_\coarse^z := \enorm{z[u] - z_\coarse[u_\coarse]}^2 + \gamma \, \zeta_\coarse(z_\coarse[u_\coarse])^2,
	\end{align}
	defined for all $\TT_\coarse \in \T$, satisfy the following contraction properties: There exists
	an index $\ell_0 \ge 0$ such that, for all $\ell\in \N_0$ with $\ell \ge \ell_0$ and all $n \in \N$, it holds that
	\begin{itemize}
		\item $\Delta_{\ell + n}^u \le \qctr \, \Delta_\ell^u$ provided that $\MM_\ell$ satisfies~\eqref{eq:doerfler:u};
		\item $\big[ \, \Delta_{\ell + n}^u + \Delta_{\ell + n}^z \, \big] \le \qctr \, \big[ \, \Delta_\ell^u + \Delta_\ell^z \, \big]$ provided that $\MM_\ell$ satisfies~\eqref{eq:doerfler:uz};
	\end{itemize}
	The constants $\gamma$ and $\qctr$ depend only on $\theta$, $\qred$, $\Cstab$, and $\Crel$, and the index $\ell_0$, which depends on $\gamma$ and $\qctr$, is essentially provided by Lemma~\ref{lemma:orth:primal} and Lemma~\ref{lemma:orth:dual}.
\end{proposition}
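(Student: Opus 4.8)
The plan is to follow the now-standard "estimator reduction plus quasi-orthogonality" scheme of~\cite{ckns2008}, adapted to the present inexact-dual setting as in~\cite{bhp2017}, but carried out separately for the primal quasi-error $\Delta_\coarse^u$ and for the combined quasi-error $\Delta_\coarse^u + \Delta_\coarse^z$. First I would treat the primal case: if $\MM_\ell$ satisfies the D\"orfler criterion~\eqref{eq:doerfler:u}, then since $\MM_\ell \subseteq \TT_\ell \setminus \TT_{\ell+1}$ we get $\theta\,\eta_\ell(u_\ell)^2 \le \eta_\ell(\TT_\ell \setminus \TT_{\ell+1}, u_\ell)^2$, and Lemma~\ref{lemma:generalized_estimator_reduction} yields $\eta_{\ell+1}(u_{\ell+1})^2 \le q\,\eta_\ell(u_\ell)^2 + C\,\enorm{u_{\ell+1}-u_\ell}^2$ with $0<q<1$. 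Combining this with the quasi-orthogonality of Lemma~\ref{lemma:orth:primal} (valid for $\ell \ge \ell_0$ thanks to the plain convergence $\enorm{u-u_\ell}\to 0$ established in Proposition~\ref{prop:plain_convergence}), one estimates $\Delta_{\ell+1}^u = \enorm{u-u_{\ell+1}}^2 + \gamma\,\eta_{\ell+1}(u_{\ell+1})^2$ and, after choosing $\gamma$ small enough so that the $\enorm{u_{\ell+1}-u_\ell}^2$ terms are absorbed, obtains $\Delta_{\ell+1}^u \le \qctr\,\Delta_\ell^u$ for some $\qctr \in (0,1)$. Iterating this (and using quasi-monotonicity for the steps where D\"orfler does not hold, or simply that a contraction in one step followed by non-increase gives the claim for general $n$) gives $\Delta_{\ell+n}^u \le \qctr\,\Delta_\ell^u$; here one must be slightly careful, since $\Delta_\coarse^u$ need not be monotone in general — the cleanest route is to prove the one-step contraction whenever~\eqref{eq:doerfler:u} holds and a uniform bound $\Delta_{\ell+1}^u \le C\,\Delta_\ell^u$ always, then deduce the statement as written.

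For the combined case, the same skeleton is used but with the sum estimator $\eta_\coarse^2 + \zeta_\coarse^2$ and the combined D\"orfler criterion~\eqref{eq:doerfler:uz}: Lemma~\ref{lemma:generalized_estimator_reduction} applied to $\eta$ and to $\zeta$ separately, together with $\theta[\eta_\ell^2+\zeta_\ell^2] \le \eta_\ell(\TT_\ell\setminus\TT_{\ell+1})^2 + \zeta_\ell(\TT_\ell\setminus\TT_{\ell+1})^2$, gives a reduction $\eta_{\ell+1}^2 + \zeta_{\ell+1}(z_{\ell+1}[u_{\ell+1}])^2 \le q\,[\eta_\ell^2 + \zeta_\ell(z_\ell[u_\ell])^2] + C\,[\enorm{u_{\ell+1}-u_\ell}^2 + \enorm{z_{\ell+1}[u_{\ell+1}]-z_\ell[u_\ell]}^2]$. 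The crucial new ingredient here is the combined quasi-orthogonality for the \emph{inexact} dual problem, Lemma~\ref{lemma:orth:dual}, which controls $\enorm{u-u_{\ell+n}}^2 + \enorm{z[u]-z_{\ell+n}[u_{\ell+n}]}^2$ plus the increments by $\tfrac{1}{1-\delta}$ times the coarse quantity; this is exactly where the linearization error $\enorm{z_\ell[u]-z_\ell[u_\ell]}$ enters and is tamed using its $\kappa_\ell$-smallness against $\enorm{u-u_\ell}$. Feeding the reduction and Lemma~\ref{lemma:orth:dual} into $\Delta_{\ell+n}^u + \Delta_{\ell+n}^z$ and choosing $\gamma$ small and then $\delta$ small (so the product of the near-one constants stays below one) yields the contraction $\Delta_{\ell+n}^u + \Delta_{\ell+n}^z \le \qctr\,[\Delta_\ell^u + \Delta_\ell^z]$ for $\ell \ge \ell_0$.

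The main obstacle I anticipate is bookkeeping the constants so that a single pair $(\gamma,\qctr)$ works \emph{simultaneously} for both contraction statements, and arranging the order of quantifiers correctly: $\gamma$ must be fixed first (depending only on $\theta,\qred,\Cstab,\Crel$ via the estimator-reduction constants $q$ and $C$), then the orthogonality parameters $\eps$ (resp.\ $\delta$) chosen small depending on $\gamma$, and only then does $\ell_0$ emerge from Lemma~\ref{lemma:orth:primal} and Lemma~\ref{lemma:orth:dual}. A secondary subtlety is that $\Delta_\coarse^z$ involves $\enorm{z[u]-z_\coarse[u_\coarse]}$ with the \emph{exact} $u$ in the argument, so one cannot apply a plain Pythagoras identity; the reduction term must be $\enorm{z_{\ell+1}[u_{\ell+1}]-z_\ell[u_\ell]}^2$ and the passage to the exact limit is done entirely inside Lemma~\ref{lemma:orth:dual}, which has already been proved. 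Finally, for the general $n$ versus one-step issue, one invokes Proposition~\ref{prop:plain_convergence} to know which D\"orfler criterion is satisfied infinitely often, applies the one-step contraction along that (sub)sequence, and uses quasi-monotonicity (Lemma~\ref{lemma:quasi-monotonicity}) together with the uniform one-step bound on the intermediate indices to interpolate; the estimator part of $\Delta_\coarse$ is quasi-monotone by Lemma~\ref{lemma:quasi-monotonicity}, and the error part is handled by the same orthogonality estimates.
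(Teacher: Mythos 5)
Your overall scheme (estimator reduction plus quasi-orthogonality, parameters fixed in the order $\gamma$, then $\eps,\delta$, then $\ell_0$, case distinction via Proposition~\ref{prop:plain_convergence} to legitimize Lemma~\ref{lemma:orth:primal} and Lemma~\ref{lemma:orth:dual}, and the observation that $\Delta_\coarse^z$ must be handled through the increments $\enorm{z_{\ell+n}[u_{\ell+n}]-z_\ell[u_\ell]}$ inside Lemma~\ref{lemma:orth:dual}) matches the paper. The genuine gap is your treatment of general $n$. The proposition asserts $\Delta_{\ell+n}^u \le \qctr\,\Delta_\ell^u$ with one fixed $\qctr<1$ \emph{uniformly in $n$}, while the intermediate steps $\ell+1,\dots,\ell+n-1$ may all be of the other marking type and need not decrease $\Delta^u$. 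Your proposed repair --- one-step contraction whenever \eqref{eq:doerfler:u} holds plus a uniform bound $\Delta_{\ell+1}^u \le C\,\Delta_\ell^u$ (or quasi-monotonicity across the remaining stretch) --- only yields $\Delta_{\ell+n}^u \le C'\qctr\,\Delta_\ell^u$ with $C'\ge 1$ coming from the C\'ea lemma and $\Cmon$, and since $\qctr$ cannot be made arbitrarily small (it is bounded below by $q=(1+\delta)[1-(1-\qred^2)\theta]$), one cannot guarantee $C'\qctr<1$. This is not a cosmetic loss: the proof of Theorem~\ref{theorem:main}(i) chains the contraction precisely over the non-consecutive indices at which the respective D\"orfler criterion holds, skipping the steps in between, so a factor $\ge 1$ on the skipped stretches destroys the argument.

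The paper avoids this by proving the contraction \emph{directly} between $\TT_\ell$ and $\TT_{\ell+n}$, which is exactly why Lemma~\ref{lemma:orth:primal}, Lemma~\ref{lemma:orth:dual}, and Lemma~\ref{lemma:generalized_estimator_reduction} are formulated for an arbitrary refinement: since $\MM_\ell \subseteq \TT_\ell\setminus\TT_{\ell+1} \subseteq \TT_\ell\setminus\TT_{\ell+n}$, the marking \eqref{eq:doerfler:u} at step $\ell$ gives $\theta\,\eta_\ell(u_\ell)^2 \le \eta_\ell(\TT_\ell\setminus\TT_{\ell+n},u_\ell)^2$, so Lemma~\ref{lemma:generalized_estimator_reduction} yields $\eta_{\ell+n}(u_{\ell+n})^2 \le q\,\eta_\ell(u_\ell)^2 + C\,\enorm{u_{\ell+n}-u_\ell}^2$, and the quasi-orthogonality controls $\enorm{u-u_{\ell+n}}^2+\enorm{u_{\ell+n}-u_\ell}^2$ by $\tfrac{1}{1-\eps}\enorm{u-u_\ell}^2$; choosing $\gamma C\le 1$, then $\delta$ with $q+\delta\Crel^2<1$, then $\eps$ with $(1-\eps)^{-1}-\gamma\delta<1$ gives $\Delta_{\ell+n}^u\le \qctr\,\Delta_\ell^u$ in one shot, with the same mechanism (via Lemma~\ref{lemma:orth:dual}) for the combined quantity. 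If you replace your iteration/interpolation step by this two-level argument, the rest of your proposal goes through.
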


\begin{proof}
	Let $0 < \eps, \delta, \gamma < 1$ be free parameters, which will be fixed later.
	
	{\bf Step~1.}
	Consider the case that $\#\set{k \in \N_0}{\MM_k \text{ satisfies~\eqref{eq:doerfler:u}}} = \infty$ and that $\MM_\ell$ satisfies~\eqref{eq:doerfler:u}. 
	From the generalized estimator reduction (Lemma~\ref{lemma:generalized_estimator_reduction}), we get that
	\begin{align*}
	\eta_{\ell+n}(u_{\ell+n})^2 
	\reff{lemma:generalized_estimator_reduction}\le q \, \eta_\ell(u_\ell)^2 + C \, \enorm{u_{\ell+n} - u_\ell}^2,
	\end{align*}
	where $0 < q < 1$ depends only on $\theta$ and $\qred$, while $C > 0$ depends additionally on $\Cstab$.
	Together with the quasi-orthogonality (Lemma~\ref{lemma:orth:primal}), we see that
	\begin{align*}
	\Delta_{\ell+n}^u 
	&= \enorm{u - u_{\ell+n}}^2 + \gamma \, \eta_\ell(u_{\ell+n})^2 
	\\&
	\le \frac{1}{1-\eps} \, \enorm{u - u_\ell}^2 + \gamma q \, \eta_\ell(u_\ell)^2 + (\gamma C - 1) \, \enorm{u_{\ell+n} - u_\ell}^2.
	\end{align*}
	The choice of $\gamma$ must enforce that $\gamma C\le 1$. Together with reliability, we are then led to
	\begin{align*}
	\Delta_{\ell+n}^u 
	\reff{assumption:rel}\le \Big[ \, \frac{1}{1-\eps} - \gamma \delta \Big] \, \, \enorm{u - u_\ell}^2 + \gamma \, \big[ \, q + \delta \Crel^2 \, \big] \, \eta_\ell(u_\ell)^2.
	\end{align*}
	The choice of $\delta > 0$ must guarantee that $q + \delta \Crel^2 < 1$. Finally, the choice of $\eps > 0$ must guarantee that $(1-\eps)^{-1} - \gamma\delta < 1$. Then, we see that
	\begin{align*}
	\Delta_{\ell+n}^u \le \qctr \, \Delta_\ell,
	\quad \text{where} \quad
	\qctr := \max\{ (1-\eps)^{-1} - \gamma\delta \,,\, q + \delta\Crel^2 \} < 1.
	\end{align*}
	
	{\bf Step~2.}
	Consider the case that $\#\set{k \in \N_0}{\MM_k \text{ satisfies~\eqref{eq:doerfler:uz}}} = \infty$ and that $\MM_\ell$ satisfies~\eqref{eq:doerfler:uz}. 
	The same arguments apply (now based on the combined quasi-orthogonality from Lemma~\ref{lemma:orth:dual}). 
	
	{\bf Step~3.}
	If $\ell_0^u := \#\set{k \in \N_0}{\MM_k \text{ satisfies~\eqref{eq:doerfler:u}}} < \infty$, choose $\ell_0 > \ell_0^u$ as well as the free parameters according to Step~2.
	
	{\bf Step~4.}
	If $\ell_0^{uz} := \#\set{k \in \N_0}{\MM_k \text{ satisfies~\eqref{eq:doerfler:uz}}} < \infty$, choose $\ell_0 > \ell_0^{uz}$ as well as the free parameters according to Step~1.
	
	{\bf Step~5.}
	Finally, note that Step~3 and Step~4 are exclusive, since $\N_0 = \set{k \in \N_0}{\MM_k \text{ satisfies~\eqref{eq:doerfler:u}}}  \cup \set{k \in \N_0}{\MM_k \text{ satisfies~\eqref{eq:doerfler:uz}}} $. This concludes the proof.
\end{proof}

\begin{proof}[\bfseries Proof of Theorem~\ref{theorem:main}\rm\bf(i)]
	Recall the quasi-errors from~\eqref{def:Delta}. We first prove that $\Delta_\coarse^u \simeq \eta_\coarse(u_\coarse)^2$ as well as
	$\big[ \, \Delta_\coarse^u + \Delta_\coarse^z \, \big] \simeq \big[ \, \eta_\coarse(u_\coarse)^2 + \zeta_\coarse(z_\coarse[u_\coarse])^2 \, \big]$. To see this, note that
	\begin{align*}
	\gamma \, \eta_\coarse(u_\coarse)^2 
	\le \Delta_\coarse^u
	&\reff{assumption:rel}\le (\Crel^2 + \gamma) \, \eta_\coarse(u_\coarse)^2
	\intertext{as well as}
	\gamma \, \big[ \, \eta_\coarse(u_\coarse)^2 + \zeta_\coarse(z_\coarse[u_\coarse])^2 \, \big]
	\le \Delta_\coarse^u + \Delta_\coarse^{z}
	&\reff{assumption:rel}\le
	(2 \, \Crel^2 + \gamma) \, \big[ \, \eta_\coarse(u_\coarse)^2 + \zeta_\coarse(z_\coarse[u_\coarse])^2 \, \big].
	\end{align*}
	Let $\ell \geq \ell_0$. In $n$ steps, the adaptive algorithm satisfies $k$ times $\overline\MM_\ell^u \subseteq \MM_\ell$ and (at least) $n-k$ times $\overline\MM_\ell^{uz} \subseteq \MM_\ell$. From Proposition~\ref{prop:linear-convergence}, we hence infer that
	\begin{align*}
	\eta_{\ell + n}(u_{\ell + n})^2
	&\le (\Crel^2 + \gamma)\gamma^{-1} \, \qctr^k \, \eta_\ell(u_\ell)^2
	\intertext{as well as}
	\big[ \, \eta_{\ell + n}(u_{\ell + n})^2 + \zeta_{\ell + n}(z_{\ell + n}[u_{\ell + n}])^2 \big] 
	&\le (2 \, \Crel^2 + \gamma)\gamma^{-1} \,  \qctr^{n-k} \, \big[ \, \eta_\ell(u_\ell)^2 + \zeta_\ell(z_\ell[u_\ell])^2\, \big].
	\end{align*}
	Multiplying these two estimates, we conclude the proof with
	$\qlin = \qctr^{1/2}$ and $\Clin = (2 \, \Crel^2 + \gamma)/\gamma$.
\end{proof}

\subsection{Optimal rates}
Linear convergence, together with the following lemma, finally proves optimal rates for Algorithm~\ref{algorithm}.
%

\begin{lemma}\label{step1:optimal}
Suppose \eqref{assumption:stab}--\eqref{assumption:drel}.
For all $0<\theta<\theta_{\rm opt} =(1+\Cstab^2\Cdrel^2)^{-1}$,
there exists $\Ctwo>0$ such that the following holds: For all $\TT_\coarse \in \T$, there exists some $\RR_\coarse \subseteq \TT_\coarse$ such that 
for all $s,t>0$ with $\norm{u}{\mathbb{A}_s} + \norm{z[u]}{\mathbb{A}_t} < \infty$ and $\alpha = \min\{2s,s+t\}$, it holds that
\begin{align}\label{eq:new1}
\begin{split}
 \#\RR_\coarse
 \le 2\,\big[\,\Ctwo\,\norm{u}{\mathbb{A}_s}(\norm{u}{\mathbb{A}_s}+\norm{z[u]}{\mathbb{A}_t})\,\big]^{1/\alpha}\,
\Big(\eta_\coarse(u_\coarse)\big[ \, \eta_\coarse(u_\coarse)^2 + \zeta_\coarse(z_\coarse[u_\coarse])^2 \, \big]^{1/2}\Big)^{-1/\alpha}
\end{split}
\end{align}
as well as the D\"orfler marking~\eqref{eq:doerfler}, i.e.,
\begin{align}\label{eq:new2}
\begin{split}
 \theta\eta_\coarse(u_\coarse)^2 
 &\le \eta_\coarse(\#\RR_\coarse, u_\coarse)^2
 \quad\text{or}\quad
 \\
 \theta\big[ \, \eta_\coarse(u_\coarse)^2 + \zeta_\coarse(z_\coarse[u_\coarse])^2 \, \big] 
 &\le  \, \eta_\coarse(\#\RR_\coarse, u_\coarse)^2 + \zeta_\coarse(\#\RR_\coarse, z_\coarse[u_\coarse])^2.
\end{split}
\end{align}
The constant $\Ctwo$ depends only on $\theta$ and \eqref{assumption:stab}--\eqref{assumption:drel}.
\end{lemma}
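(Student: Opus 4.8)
The plan is to combine the approximation classes $\mathbb{A}_s$, $\mathbb{A}_t$ with the overlay estimate~\eqref{eq:mesh-overlay} and the optimality of D\"orfler marking from Lemma~\ref{lemma:doerfler}, in the spirit of~\cite{fpz2014,axioms}. The set $\RR_\coarse$ will always be of the form $\TT_\coarse \setminus \TT_\fine$ for a suitable refinement $\TT_\fine \in \T(\TT_\coarse)$ obtained as an overlay of $\TT_\coarse$ with near-optimal meshes for the primal and/or dual problem, and the key is a case distinction according to whether the current primal estimator dominates the current dual estimator. I also note at the outset the elementary bounds $\eta_\coarse(u_\coarse) \le \norm{u}{\mathbb{A}_s}$ and $\zeta_\coarse(z_\coarse[u_\coarse]) \lesssim \norm{z[u]}{\mathbb{A}_t} + \norm{u}{\mathbb{A}_s}$, which follow by testing the definition of the approximation classes with $\TT_\coarse$ itself and, for the dual, additionally invoking Lemma~\ref{lemma:stab:z} and reliability~\eqref{assumption:rel}; moreover, we may assume the left-hand side of~\eqref{eq:new1} is positive, as otherwise there is nothing to prove.

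\emph{Case~1: $\zeta_\coarse(z_\coarse[u_\coarse]) \le \eta_\coarse(u_\coarse)$.} Here the target quantity satisfies $\eta_\coarse(u_\coarse)\,[\eta_\coarse(u_\coarse)^2 + \zeta_\coarse(z_\coarse[u_\coarse])^2]^{1/2} \simeq \eta_\coarse(u_\coarse)^2$. By definition of $\norm{u}{\mathbb{A}_s}$, for every $N_1 \in \N_0$ there is $\TT_u^\star \in \T_{N_1}$ with $\eta_{\TT_u^\star}(u_{\TT_u^\star}) \le (N_1+1)^{-s}\norm{u}{\mathbb{A}_s}$. Set $\TT_\fine := \TT_\coarse \oplus \TT_u^\star$; quasi-monotonicity (Lemma~\ref{lemma:quasi-monotonicity}) gives $\eta_\fine(u_\fine)^2 \le \Cmon (N_1+1)^{-2s}\norm{u}{\mathbb{A}_s}^2$. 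Choosing $N_1$ as the smallest integer for which the right-hand side is $\le \kappa_{\rm opt}\,\eta_\coarse(u_\coarse)^2$, with $\kappa_{\rm opt}$ from Lemma~\ref{lemma:doerfler}, that lemma shows that $\RR_\coarse := \TT_\coarse \setminus \TT_\fine$ fulfils the first alternative in~\eqref{eq:new2}, while~\eqref{eq:mesh-sons} and the overlay estimate~\eqref{eq:mesh-overlay} yield $\#\RR_\coarse \le \#\TT_\fine - \#\TT_\coarse \le N_1 \lesssim \big(\norm{u}{\mathbb{A}_s}/\eta_\coarse(u_\coarse)\big)^{1/s}$.

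\emph{Case~2: $\eta_\coarse(u_\coarse) < \zeta_\coarse(z_\coarse[u_\coarse])$.} Now $\eta_\coarse(u_\coarse)\,[\eta_\coarse(u_\coarse)^2 + \zeta_\coarse(z_\coarse[u_\coarse])^2]^{1/2} \simeq \eta_\coarse(u_\coarse)\zeta_\coarse(z_\coarse[u_\coarse])$, and we work with the combined estimator. Pick $\TT_u^\star \in \T_{N_1}$ as above and $\TT_z^\star \in \T_{N_2}$ with $\zeta_{\TT_z^\star}(z_{\TT_z^\star}[u]) \le (N_2+1)^{-t}\norm{z[u]}{\mathbb{A}_t}$, noting that the approximation class for the dual uses the \emph{exact} linearization point $u$. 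For $\TT_\fine := \TT_\coarse \oplus \TT_u^\star \oplus \TT_z^\star$, quasi-monotonicity for the fixed right-hand side $u$ together with stability~\eqref{assumption:stab}, Lemma~\ref{lemma:stab:z}, and reliability~\eqref{assumption:rel} gives $\zeta_\fine(z_\fine[u_\fine]) \lesssim \zeta_\fine(z_\fine[u]) + \enorm{u - u_\fine} \lesssim (N_2+1)^{-t}\norm{z[u]}{\mathbb{A}_t} + (N_1+1)^{-s}\norm{u}{\mathbb{A}_s}$, hence $\eta_\fine(u_\fine)^2 + \zeta_\fine(z_\fine[u_\fine])^2 \lesssim (N_1+1)^{-2s}\norm{u}{\mathbb{A}_s}^2 + (N_2+1)^{-2t}\norm{z[u]}{\mathbb{A}_t}^2$. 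Choosing $N_1, N_2$ minimal so that both summands are $\le \tfrac12\kappa_{\rm opt}\,[\eta_\coarse(u_\coarse)^2 + \zeta_\coarse(z_\coarse[u_\coarse])^2]$, the combined implication of Lemma~\ref{lemma:doerfler} shows that $\RR_\coarse := \TT_\coarse \setminus \TT_\fine$ satisfies the second alternative in~\eqref{eq:new2}, with $\#\RR_\coarse \le N_1 + N_2 \lesssim \big(\norm{u}{\mathbb{A}_s}/\zeta_\coarse(z_\coarse[u_\coarse])\big)^{1/s} + \big(\norm{z[u]}{\mathbb{A}_t}/\zeta_\coarse(z_\coarse[u_\coarse])\big)^{1/t}$.

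The main obstacle is the final elementary—but fiddly—step of turning these cardinality bounds into exactly the asserted form~\eqref{eq:new1} with $\alpha = \min\{2s, s+t\}$ and the product $\norm{u}{\mathbb{A}_s}(\norm{u}{\mathbb{A}_s}+\norm{z[u]}{\mathbb{A}_t})$. This requires, in each case, a careful distribution of the exponents $1/s$ and $1/t$, using that one estimator controls the other (so that $\eta_\coarse(u_\coarse)^2$ resp. $\eta_\coarse(u_\coarse)\zeta_\coarse(z_\coarse[u_\coarse])$ is comparable to $\eta_\coarse(u_\coarse)^2 + \zeta_\coarse(z_\coarse[u_\coarse])^2$), together with the bounds $\eta_\coarse(u_\coarse) \le \norm{u}{\mathbb{A}_s}$ and $\zeta_\coarse(z_\coarse[u_\coarse]) \lesssim \norm{z[u]}{\mathbb{A}_t}+\norm{u}{\mathbb{A}_s}$ noted above and the fact that $\alpha \le 2s$; the ceiling in the choice of $N_1, N_2$ and the crude factor $2$ in~\eqref{eq:new1} absorb the pre-asymptotic regime where $\#\RR_\coarse$ is of order one. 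No ingredients beyond the cited auxiliary results (Lemma~\ref{lemma:stab:z}, Lemma~\ref{lemma:quasi-monotonicity}, Lemma~\ref{lemma:doerfler}, and the mesh properties~\eqref{eq:mesh-sons}--\eqref{eq:mesh-overlay}) are needed.
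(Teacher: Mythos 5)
Your toolbox (overlays of near-optimal meshes, quasi-monotonicity, Lemma~\ref{lemma:doerfler}, \eqref{eq:mesh-sons}--\eqref{eq:mesh-overlay}) is the right one and your Case~1 can be completed, but Case~2 contains a genuine gap which is not merely the ``fiddly'' exponent bookkeeping you defer: the cardinality bound your construction yields there is strictly weaker than \eqref{eq:new1}. In Case~2 you insist on the \emph{combined} D\"orfler property, which forces the dual estimator itself to contract; the minimal $N_2$ with $(N_2+1)^{-t}\norm{z[u]}{\mathbb{A}_t}\lesssim \zeta_\coarse(z_\coarse[u_\coarse])$ is of order $\big(\norm{z[u]}{\mathbb{A}_t}/\zeta_\coarse(z_\coarse[u_\coarse])\big)^{1/t}$, while \eqref{eq:new1} only budgets $\big(\norm{u}{\mathbb{A}_s}(\norm{u}{\mathbb{A}_s}+\norm{z[u]}{\mathbb{A}_t})\big)^{1/\alpha}\big(\eta_\coarse(u_\coarse)[\eta_\coarse(u_\coarse)^2+\zeta_\coarse(z_\coarse[u_\coarse])^2]^{1/2}\big)^{-1/\alpha}$ with $\alpha=\min\{2s,s+t\}$. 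Under the only relations you have available ($\eta_\coarse<\zeta_\coarse$, $\eta_\coarse\lesssim\norm{u}{\mathbb{A}_s}$, $\zeta_\coarse\lesssim\norm{u}{\mathbb{A}_s}+\norm{z[u]}{\mathbb{A}_t}$) these are incomparable: take $s=t$, $\eta_\coarse<\zeta_\coarse$ with both $\simeq\norm{u}{\mathbb{A}_s}$, and $\norm{z[u]}{\mathbb{A}_t}=K\,\norm{u}{\mathbb{A}_s}$; then your bound scales like $K^{1/t}$ whereas the right-hand side of \eqref{eq:new1} scales like $K^{1/(2t)}$, so no admissible constant $\Ctwo$ can absorb the discrepancy as $K\to\infty$. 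The structural reason is that the rate $s+t$ in $\alpha$ arises from pairing the dual approximation class with the \emph{primal factor} of the product $\eta_\coarse(u_\coarse)[\eta_\coarse(u_\coarse)^2+\zeta_\coarse(z_\coarse[u_\coarse])^2]^{1/2}$, never from making the dual estimator small on its own; fixing the D\"orfler alternative a priori by comparing $\eta_\coarse$ with $\zeta_\coarse$ destroys exactly this pairing.

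The paper's proof avoids any case split: it chooses a single minimal $N$ with $\norm{u}{\mathbb{A}_s}(\norm{u}{\mathbb{A}_s}+\norm{z[u]}{\mathbb{A}_t})\le\eps\,(N+1)^{\alpha}$, where $\eps\simeq\kappa_{\rm opt}\,\eta_\coarse(u_\coarse)[\eta_\coarse(u_\coarse)^2+\zeta_\coarse(z_\coarse[u_\coarse])^2]^{1/2}$, overlays optimal meshes from $\T_N$ for $\eta$ and for $\zeta(\,\cdot\,[u])$ with $\TT_\coarse$, and proves only that the \emph{product} on the fine mesh drops below $\kappa_{\rm opt}$ times the coarse product; this is where the bound $(N+1)^{-2s}\norm{u}{\mathbb{A}_s}^2+(N+1)^{-(s+t)}\norm{u}{\mathbb{A}_s}\norm{z[u]}{\mathbb{A}_t}\le(N+1)^{-\alpha}\norm{u}{\mathbb{A}_s}(\norm{u}{\mathbb{A}_s}+\norm{z[u]}{\mathbb{A}_t})$ enters and produces $\alpha=\min\{2s,s+t\}$. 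From the contraction of the product one deduces \emph{a posteriori} that either $\eta_\fine(u_\fine)^2\le\kappa_{\rm opt}\eta_\coarse(u_\coarse)^2$ or the combined estimator contracts (if the first fails, divide the product inequality by $\eta_\fine(u_\fine)>\kappa_{\rm opt}^{1/2}\eta_\coarse(u_\coarse)$), and Lemma~\ref{lemma:doerfler} then gives \eqref{eq:new2}; which alternative occurs is immaterial and need not match the sign of $\eta_\coarse-\zeta_\coarse$. If you restructure Case~2 this way, and also repair the minor point that $\eta_\coarse(u_\coarse)\le\norm{u}{\mathbb{A}_s}$ does not follow by ``testing the definition with $\TT_\coarse$'' (the class is defined via minima over $\T_N$; argue instead via $\eta_0(u_0)\le\norm{u}{\mathbb{A}_s}$ and quasi-monotonicity, as the paper does), the proof closes.
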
%

\begin{proof}
Adopt the notation of Lemma~\ref{lemma:doerfler}.
According to Lemma~\ref{lemma:stab:z}, stability~\eqref{assumption:stab} and reliability~\eqref{assumption:rel}, it holds that
\begin{equation*}
	\zeta_\coarse(z_\coarse[u_\coarse])
	\leq
	C \big[  \eta_\coarse(u_\coarse) + \zeta_\coarse(z_\coarse[u])  \big]
	\text{ and }
	\zeta_\coarse(z_\coarse[u])
	\leq
	C \big[  \eta_\coarse(u_\coarse) + \zeta_\coarse(z_\coarse[u_\coarse])  \big]
\end{equation*}
with $C := \max\{ 1, \Cstab \Crel C_1 C_2 \}$.
The quasi-monotonicity of the estimators (Lemma~\ref{lemma:quasi-monotonicity}) and $\big[ \, \eta_\fine(u_\fine)^2 + \zeta_\fine(z_\fine[u_\fine])^2 \, \big]^{1/2} \le (C+1) \big[ \, \eta_\fine(u_\fine) + \zeta_\fine(z_\fine[u]) \, \big]$ yield that
\begin{align*}
 \eps &:= (C+1)^{-1}\Cmon^{-1}\kappa_{\rm opt}\,\eta_\coarse(u_\coarse)\big[ \, \eta_\coarse(u_\coarse)^2 + \zeta_\coarse(z_\coarse[u_\coarse])^2 \, \big]^{1/2} \\
 &\le (C+1)^{-1}\kappa_{\rm opt}\,\eta_0(u_0)\big[ \, \eta_0(u_0)^2 + \zeta_0(z_0[u_0])^2 \, \big]^{1/2}
 \\&
 < \eta_0(u_0)(\eta_0(u_0)+\zeta_0(z_0[u]))
 \le \norm{u}{\mathbb{A}_s}(\norm{u}{\mathbb{A}_s}+\norm{z[u]}{\mathbb{A}_t}) < \infty.
\end{align*}
Choose the minimal $N\in\N_0$ such that 
\begin{align*}
 \norm{u}{\mathbb{A}_s}(\norm{u}{\mathbb{A}_s}+\norm{z[u]}{\mathbb{A}_t}) \le\eps\,(N+1)^{\alpha}.
\end{align*}
From the choice of $\varepsilon$ and the previous estimate, it follows that $N > 0$.
Choose $\TT_{\eps_1},\TT_{\eps_2}\in\T_N$ with 
$\eta_{\eps_1}(u_{\eps_1})=\min_{\TT_\fine\in\T_N}\eta_\fine(u_\fine)$ and
$\zeta_{\eps_2}(z_{\eps_2}[u]) = \min_{\TT_\fine\in\T_N} \zeta_\fine(z_\fine[u])$.
Define $\TT_\eps:=\TT_{\eps_1}\oplus\TT_{\eps_2}$
and $\TT_\fine:=\TT_\eps\oplus\TT_\coarse$.
Then, Lemma~\ref{lemma:quasi-monotonicity}, the definition of the 
approximation classes, and the choice of $N$  and $\alpha = \min\{2s,s+t\}$ give that
\begin{align*}
 &\eta_\fine(u_\fine)\big[ \, \eta_\fine(u_\fine)^2 + \zeta_\fine(z_\fine[u_\fine])^2 \, \big]^{1/2}
 \le  \Cmon \eta_{\eps_1}(u_{\eps_1})\big[ \, \eta_{\eps_1}(u_{\eps_1})^2 + \zeta_{\eps_2}(z_{\eps_2}[u_{\eps_2}])^2 \, \big]^{1/2}
 \\&\quad
 \le (C+1) \Cmon \eta_{\eps_1}(u_{\eps_1}) \, \big[\,\eta_{\eps_1}(u_{\eps_1})+\zeta_{\eps_2}(z_{\eps_2}[u]) \, \big]
 \\&\quad
 \le (C+1) \Cmon\big((N+1)^{-(2s)}\norm{u}{\mathbb{A}_s}^2 + (N+1)^{-(s+t)}\norm{u}{\mathbb{A}_s}\norm{z[u]}{\mathbb{A}_t}\big)
 \\&\quad
 \le (C+1) \Cmon(N+1)^{-\alpha}\norm{u}{\mathbb{A}_s}(\norm{u}{\mathbb{A}_s}+\norm{z[u]}{\mathbb{A}_t})
 \\&\quad
 \le (C+1) \Cmon\eps  = \kappa_{\rm opt}\,\eta_\coarse(u_\coarse)\big[ \, \eta_\coarse(u_\coarse)^2 + \zeta_\coarse(z_\coarse[u_\coarse])^2 \, \big]^{1/2}.
\end{align*}
This implies that $\eta_\fine(u_\fine)^2 \le \kappa_{\rm opt}\,\eta_\coarse(u_\coarse)^2$ or
$\big[ \, \eta_\fine(u_\fine)^2 + \zeta_\fine(z_\fine[u_\fine])^2 \, \big] \le \kappa_{\rm opt}\,\big[ \, \eta_\coarse(u_\coarse)^2 + \zeta_\coarse(z_\coarse[u_\coarse])^2 \, \big]$. 
Lemma~\ref{lemma:doerfler} hence proves~\eqref{eq:new2} with $\RR_\coarse := \TT_\coarse \backslash \TT_\fine$. It remains to derive~\eqref{eq:new1}. To that end, define 
\begin{align*}
 \widetilde{C} &:= \big[\,\norm{u}{\mathbb{A}_s}(\norm{u}{\mathbb{A}_s}+\norm{z[u]}{\mathbb{A}_t})\,\Cmon\kappa_{\rm opt}^{-1}\,\big]^{1/\alpha}.
\end{align*}
Then, minimality of $N \in \N_0$ and $N > 0$ yield that
\begin{align*}
 N < \big[\,\norm{u}{\mathbb{A}_s}(\norm{u}{\mathbb{A}_s}+\norm{z[u]}{\mathbb{A}_t})\,\big]^{1/\alpha}\eps^{-1/\alpha}
 = \widetilde{C} \,\Big(\eta_\coarse(u_\coarse)\big[ \, \eta_\coarse(u_\coarse)^2 + \zeta_\coarse(z_\coarse[u_\coarse])^2 \, \big]^{1/2} \Big)^{-1/\alpha}.
\end{align*}
According to the choice of $\TT_\fine$ and $\RR_\coarse$,
the overlay estimate~\eqref{eq:mesh-overlay} yields that
\begin{align}\label{eq:new:step2}
\begin{split}
&\#\RR_\coarse = \#(\TT_\coarse \backslash \TT_\fine) 
\stackrel{\eqref{eq:mesh-sons}}\le \#\TT_\fine - \#\TT_\coarse
\stackrel{\eqref{eq:mesh-overlay}}\le\#\TT_\eps - \#\TT_0
\stackrel{\eqref{eq:mesh-overlay}}\le\#\TT_{\eps_1} + \#\TT_{\eps_2} - 2\,\#\TT_0
\le 2N 
\\& \qquad 
< 2 \widetilde{C} \, \Big( \eta_\coarse(u_\coarse)\big[ \, \eta_\coarse(u_\coarse)^2 + \zeta_\coarse(z_\coarse[u_\coarse])^2 \, \big]^{1/2} \Big)^{-1/\alpha}.
\end{split}
\end{align}
Overall, we conclude~\eqref{eq:new1} with $\Ctwo = \Cmon / \kappa_{\rm opt}$.
\end{proof}

\begin{proof}[\bfseries Proof of Theorem~\ref{theorem:main}\rm\bf(ii)]
According to~\eqref{eq:new2} of Lemma~\ref{step1:optimal} and the marking strategy in Algorithm~\ref{algorithm}, for all $j\in\N_0$, it holds that
\begin{align*}
\begin{split}
\#\MM_j &\le 2 \, \min\{\#\overline\MM_j^u \,,\, \#\overline\MM_j^{uz}\}
\le 2 \Cmark \, \#\RR_j.
\end{split}
\end{align*}
With $\alpha = \min\{2s,s+t\}>0$, estimate~\eqref{eq:new1} of Lemma~\ref{step1:optimal} implies that
\begin{align*}
 \#\MM_j \le 4 \, \Cmark \,\big[\,\Ctwo\,\norm{u}{\mathbb{A}_s}(\norm{u}{\mathbb{A}_s}+\norm{z}{\mathbb{A}_t})\,\big]^{1/\alpha}\,
 \Big(\eta_j(u_j)\big[ \, \eta_j(u_j)^2 + \zeta_j(z_j[u_j])^2 \, \big]^{1/2} \Big)^{-1/\alpha}.
\end{align*}
With the mesh-closure estimate~\eqref{eq:mesh-closure}, we obtain that
\begin{align*}
\#\TT_\ell-\#\TT_0
&\stackrel{\eqref{eq:mesh-closure}}\le\Cnvb \sum_{j=0}^{\ell-1}\#\MM_j
=
\Cnvb \Big( \sum_{j=0}^{\ell_0-1} \#\MM_j + \sum_{j=\ell_0}^{\ell-1} \#\MM_j \Big).
\end{align*}
Using that $\#\MM_j \leq \#\TT_{j+1} - \#\TT_j$, we get that
\begin{equation}\label{eq:4711}
\begin{split}
	\#\TT_\ell-\#\TT_0
	&\leq
	\Cnvb \Big( \#\TT_{\ell_0}-\#\TT_0 + \sum_{j=\ell_0}^{\ell-1} \#\MM_j \Big)
	\leq
	\Cnvb \big( \#\TT_{\ell_0} - \#\TT_0 + 1 \big) \sum_{j=\ell_0}^{\ell-1} \#\MM_j \\
	& \qquad \lesssim
	\sum_{j=\ell_0}^{\ell-1} \Big(\eta_j(u_j)\big[ \, \eta_\coarse(u_j)^2 + \zeta_j(z_j[u_j])^2 \, \big]^{1/2} \Big)^{-1/\alpha}.
\end{split}
\end{equation}%
Linear convergence~\eqref{eq:linear} implies that
\begin{align*}
 \eta_\ell(u_\ell)\big[ \, \eta_\ell(u_\ell)^2 + \zeta_\ell(z_\ell[u_\ell])^2 \, \big]^{1/2}
 \le\Clin\, \qlin^{\ell-j}\eta_j(u_j)\big[ \, \eta_j(u_j)^2 + \zeta_j(z_j[u_j])^2 \, \big]^{1/2}
\end{align*}
for all $0\le j\le\ell$ and hence
\begin{align*}
 &\Big( \eta_j(u_j)\big[ \, \eta_\coarse(u_j)^2 + \zeta_j(z_j[u_j])^2 \, \big]^{1/2} \Big)^{-1/\alpha}\\
 &\hspace{100pt}\le \Clin^{1/\alpha} \qlin^{(\ell-j)/\alpha}
 \Big( \eta_\ell(u_\ell)\big[ \, \eta_\ell(u_\ell)^2 + \zeta_\ell(z_\ell[u_\ell])^2 \, \big]^{1/2} \Big)^{-1/\alpha}.
\end{align*}
With $0<q:=\qlin^{1/\alpha}<1$, the geometric series applies and yields that
\begin{align*}
 &\sum_{j=\ell_0}^{\ell-1} \Big( \eta_j(u_j)\big[ \, \eta_\coarse(u_j)^2 + \zeta_j(z_j[u_j])^2 \, \big]^{1/2} \Big)^{-1/\alpha}\\
 &\hspace{100pt}\le \Clin^{1/\alpha}  \Big( \eta_\ell(u_\ell)\big[ \, \eta_\ell(u_\ell)^2 + \zeta_\ell(z_\ell[u_\ell])^2 \, \big]^{1/2} \Big)^{-1/\alpha}\,
 \sum_{j=\ell_0}^{\ell-1} q^{\ell-j}
 \\&\hspace{100pt}
 \le \frac{\Clin^{1/\alpha}}{1-\qlin^{1/\alpha}}\, \Big( \eta_\ell(u_\ell)\big[ \, \eta_\ell(u_\ell)^2 + \zeta_\ell(z_\ell[u_\ell])^2 \, \big]^{1/2} \Big)^{-1/\alpha}. 
\end{align*}
Combining this with~\eqref{eq:4711}, we obtain that
\begin{align*}
 \#\TT_\ell-\#\TT_0
 \le 4\, \frac{\Cnvb\Cmark}{1-\qlin^{1/\alpha}} \big( \#\TT_{\ell_0} - \#\TT_0 + 1 \big) \,
 &\big[\,\Clin \Ctwo\,\norm{u}{\mathbb{A}_s}(\norm{u}{\mathbb{A}_s}+\norm{z}{\mathbb{A}_t})\,\big]^{1/\alpha}
 \\& \quad
 \,\Big(\eta_\ell(u_\ell)\big[ \, \eta_\ell(u_\ell)^2 + \zeta_\ell(z_\ell[u_\ell])^2 \, \big]^{1/2} \Big)^{-1/\alpha}.
\end{align*}
Altogether, we conclude~\eqref{eq:optimal} with $\expandafter\widetilde\Copt := \max\{ \Clin \Ctwo \,, 4 \, \Cnvb\Cmark \}$ and $\Copt = \expandafter\widetilde\Copt^{1+\alpha} \big( \#\TT_{\ell_0} - \#\TT_0 + 1 \big)^{1+\alpha} / (1-\qlin^{1/\alpha})^{\alpha}$.
\end{proof}

\section{Proof of Theorem~\lowercase{\ref{theorem:combined}}}\label{sec:proof:combined}

In contrast to the corresponding results for Algorithm~\ref{algorithm}, the proof of Theorem~\ref{theorem:combined} (for Algorithm~\ref{algorithm:combined}) follows essentially from the abstract setting of~\cite{axioms}.

\begin{proof}[\bfseries Proof of Theorem~\ref{theorem:combined}]	
	{\rm (i)} Note that \eqref{eq:doerfler:dual} coincides with \eqref{eq:doerfler:uz}.
	Hence, Proposition~\ref{prop:linear-convergence} can be applied and results in
	\begin{equation*}
		\big[ \,
			\Delta_{\ell+n}^u + \Delta_{\ell+n}^z
		\, \big]
		\leq
		\qctr \, \big[ \,
			\Delta_{\ell}^u + \Delta_{\ell}^z
		\, \big]
		\quad \text{for all } \ell, n \in \N_0 \text{ with } \ell \geq \ell_0.
	\end{equation*}
	For $n=1$, we conclude contraction and hence linear convergence
	\begin{align*}
		&\big[ \,
			\eta_k(u_k)^2 + \zeta_k(z_k[u_k])^2
		\, \big]
		\simeq
		\, \big[ \,
			\Delta_{k}^u + \Delta_{k}^z
		\, \big] \\
		&\qquad \qquad \leq
		\qctr^{k-\ell} \, \big[ \,
			\Delta_{\ell}^u + \Delta_{\ell}^z
		\, \big]
		\simeq
		\qctr^{k-\ell} \, \big[ \,
			\eta_\ell(u_\ell)^2 + \zeta_\ell(z_\ell[u_\ell])^2
		\, \big]
		\quad \text{for all } k \geq \ell \geq \ell_0.
	\end{align*}
	
	{\rm (ii)} Since Proposition~\ref{prop:axioms-product-estimator} shows that there hold \eqref{assumption:stab}--\eqref{assumption:drel} even for the combined estimator, \cite[Theorem~4.1(ii)]{axioms} guarantees convergence with optimal rates according to the approximation class
	\begin{equation*}
		\norm{(u,z[u])}{\mathbb{A}_\beta} 
		:= \sup_{N\in\N_0} \Big( 
			(N+1)^\beta \min_{\TT_\coarse\in\T_N} \big[ \eta_\coarse(u)^2 + \zeta_\coarse(z_\coarse[u])^2 \big]^{1/2}
		\Big)
		\in \R_{\geq 0} \cup \{ \infty \}
	\end{equation*}
	with $\beta > 0$.
	In particular, there exists a constant $\expandafter\widetilde\Copt > 0$ such that
	\begin{equation}\label{eq:thm:optimal-dual:axioms}
		\sup_{\ell \in \N_0}
		\frac{[ \eta_\ell(u_\ell)^2 + \zeta_\ell(z_\ell[u_\ell])^2]^{1/2}}{(\#\TT_\ell - \#\TT_0 + 1)^{-\beta}}
		\leq
		\expandafter\widetilde\Copt \, \norm{(u,z[u])}{\mathbb{A}_\beta}
	\end{equation}
	for all $\beta > 0$.
	For $N \in \N_0$ choose $\TT_{\coarse^u}, \TT_{\coarse^z} \in \T_N$ such that
	\begin{equation}\label{eq:minimality}
		\eta_{\coarse^u}(u_{\coarse^u})
		=
		\min_{\TT_\star \in \T_N} \eta_\star(u_\star),
		\quad
		\zeta_{\coarse^z}(z_{\coarse^z}[u])
		=
		\min_{\TT_\star \in \T_N} \zeta_\star(z_\star[u]).
	\end{equation}
	Then, for the overlay $\TT_\coarse := \TT_{\coarse^u} \oplus \TT_{\coarse^z}$, it holds that
	\begin{equation*}
		\#\TT_\coarse - \#\TT_0
		\leq
		\#\TT_{\coarse^u} + \#\TT_{\coarse^z} - 2 \#\TT_0
		\leq
		2 N
	\end{equation*}
	and thus $\TT_\coarse \in \T_{2N}$.
	From the minimality assumption in \eqref{eq:minimality}, we infer that
	\begin{align*}
		(2N+1)^\beta \big[ \,
			\eta_\coarse(u_\coarse)^2 &+ \zeta_\coarse(z_\coarse[u])^2
		\, \big]^{1/2}
		\leq
		2^\beta \big[ \,
			(N+1)^\beta \eta_\coarse(u_\coarse) + (N+1)^\beta \zeta_\coarse(z_\coarse[u])
		\, \big] \\
		&\stackrel{\mathclap{\eqref{eq:quasi-monotonicity}}}{\leq}
		2^\beta \, \big[ \,
			\Cmon (N+1)^\beta \eta_{\coarse^u}(u_{\coarse^u}) + \Cmon (N+1)^\beta \zeta_{\coarse^z}(z_{\coarse^z}[u])
		\, \big] \\
		&\stackrel{\mathclap{\eqref{eq:minimality}}}{\leq}
		2^\beta \Cmon \, \big[ \,
			\norm{u}{\mathbb{A}_\beta} + \norm{z}{\mathbb{A}_\beta}
		\, \big].
	\end{align*}
	Hence, we have $\norm{(u,z[u])}{\mathbb{A}_\beta} \lesssim \norm{u}{\mathbb{A}_s} + \norm{z}{\mathbb{A}_t}$ for $\beta \leq \min\{s,t\}$.
	From this and \eqref{eq:thm:optimal-dual:axioms}, we obtain \eqref{eq:combined:optimal} by squaring, thus doubling the rate, i.e.\ $\beta=2\alpha$.
\end{proof}

%

{
	\renewcommand{\section}[3][]{\vskip4mm\begin{center}\bf\normalsize R\small EFERENCES\normalsize\end{center}\vskip2mm}
	
	\bibliographystyle{alpha}
	\bibliography{literature}
}


\clearpage


%
%



\end{document}